\DeclareFontFamily{U}{mathx}{\hyphenchar\font45}
\DeclareFontShape{U}{mathx}{m}{n}{
      <5> <6> <7> <8> <9> <10>
      <10.95> <12> <14.4> <17.28> <20.74> <24.88>
      mathx10
      }{}
\DeclareSymbolFont{mathx}{U}{mathx}{m}{n}
\DeclareMathAccent{\widecheck}{0}{mathx}{"71}
\DeclareMathAccent{\wideparen}{0}{mathx}{"75}
\newcommand{\widecheck}{\check}
\DeclareMathOperator\tr{Tr}
\newcommand{\aubrydual}{{\sharp}}
\newcommand{\realified}{{\mathscr{R}}}
\newcommand{\costwopi}{{\mathrm{c}}}
\newcommand{\sintwopi}{{\mathrm{s}}}
\definecolor{purple}{rgb}{.5,0,1}
\definecolor{orange}{rgb}{1,.5,0}
\definecolor{pink}{rgb}{1,0,.5}
\definecolor{green}{rgb}{0,.5,0}
\definecolor{gold}{rgb}{1,.623,0}
\def\red#1{\textcolor{red}{#1}}
\def\blue#1{\textcolor{blue}{#1}}
\def\green#1{\textcolor{green}{#1}}
\newcommand{\bbC}{{\mathbb{C}}}
\newcommand{\bbD}{{\mathbb{D}}}
\newcommand{\bbG}{{\mathbb{G}}}
\newcommand{\bbL}{{\mathbb{L}}}
\newcommand{\bbN}{{\mathbb{N}}}
\newcommand{\bbO}{{\mathbb{O}}}
\newcommand{\bbQ}{{\mathbb{Q}}}
\newcommand{\bbR}{{\mathbb{R}}}
\newcommand{\bbS}{{\mathbb{S}}}
\newcommand{\bbT}{{\mathbb{T}}}
\newcommand{\bbU}{{\mathbb{U}}}
\newcommand{\bbZ}{{\mathbb{Z}}}
\newcommand{\calA}{{\mathcal{A}}}
\newcommand{\calE}{{\mathcal{E}}}
\newcommand{\calL}{{\mathcal{L}}}
\newcommand{\calM}{{\mathcal{M}}}
\newcommand{\calX}{{\mathcal{X}}}
\newcommand{\scrB}{{\mathscr{B}}}
\newcommand{\scrH}{{\mathscr{H}}}
\newcommand{\scrZ}{{\mathscr{Z}}}
\newcommand{\vecn}{{\underline{n}}}
\renewcommand{\Im}{{\mathrm{Im}}}
\renewcommand{\Re}{{\mathrm{Re}}}
\newcommand{\GL}{{\bbG\bbL}}
\newcommand{\SL}{{\bbS\bbL}}
\newcommand{\SO}{{\bbS\bbO}}
\newcommand{\SU}{{\bbS\bbU}}
\newtheorem{theorem}{Theorem}[section]
\newtheorem{lemma}[theorem]{Lemma}
\newtheorem{prop}[theorem]{Proposition}
\newtheorem{coro}[theorem]{Corollary}
\theoremstyle{definition}
\newtheorem{remark}[theorem]{Remark}
\newtheorem{definition}[theorem]{Definition}
\newcommand{\set}[1]{{\left\{ #1 \right\}}}
\newcommand{\spr}{{\mathrm{spr}}}
\def\subsection{\@startsection{subsection}{2}%
	\z@{.5\linespacing\@plus.7\linespacing}{.5\linespacing}%
	{\normalfont\scshape\centering}}
\numberwithin{equation}{section}
\title[Unitary Almost Mathieu Operator]{Almost Everything About the \\ Unitary Almost Mathieu Operator}
\author[C.\ Cedzich]{Christopher Cedzich}
\email{\href{mailto:cedzich@hhu.de}{cedzich@hhu.de}}
\address{Quantum Technology Group, Heinrich Heine Universit\"at D\"usseldorf, Universit\"atsstr. 1, 40225 D\"usseldorf, Germany}
\author[J.\ Fillman]{Jake Fillman}
\email{\href{mailto:fillman@txstate.edu}{fillman@txstate.edu}}
\address{Department of Mathematics, Texas State University, San Marcos, TX 78666, USA}
\author[D.\ C.\ Ong]{Darren C.\ Ong}
\email{\href{mailto:darrenong@xmu.edu.my}{darrenong@xmu.edu.my}}
\address{Department of Mathematics and Applied Mathematics, Xiamen University Malaysia, Jalan Sunsuria, Bandar Sunsuria, 43900 Selangor, Malaysia and School of Mathematical Sciences, Xiamen University, 361005 Xiamen, Fujian, China}
\begin{document}

\begin{abstract}
We introduce the unitary almost-Mathieu operator, which is obtained from a two-dimensional quantum walk in a uniform magnetic field. We exhibit a version of Aubry--Andr\'{e} duality for this model, which partitions the parameter space into three regions: a supercritical region and a subcritical region that are dual to one another, and a critical regime that is self-dual. In each parameter region, we characterize the cocycle dynamics of the transfer matrix cocycle generated by the associated generalized eigenvalue equation. In particular, we show that supercritical, critical, and subcritical behavior all occur in this model. Using Avila's global theory of one-frequency cocycles, we exactly compute the Lyapunov exponent on the spectrum in terms of the given parameters. We also characterize the spectral type for each value of the coupling constant, almost every frequency, and almost every phase. Namely, we show that for almost every frequency and every phase the spectral type is purely absolutely continuous in the subcritical region, pure point in the supercritical region, and purely singular continuous in the critical region. In some parameter regions, we refine the almost-sure results. In the critical case for instance, we show that the spectrum is a Cantor set of zero Lebesgue measure for arbitrary irrational frequency and that the spectrum is purely singular continuous for all but countably many phases.

\textbf{Keywords:} Quantum Walks, Almost Mathieu Operator, Spectral Theory, Unitary Dynamics, CMV matrices, OPUC, Aubry duality.
\end{abstract}

\maketitle

\setcounter{tocdepth}{1}
\tableofcontents

%changing the hyperlink-colors such that they are are black in the table of contents and thereafter dark blue
\hypersetup{
	linkcolor={black!30!blue},
	citecolor={black!30!blue},
	urlcolor={black!30!blue}
}

%!TEX root = CFO.tex

\section{Introduction}
Discrete-time quantum walks have been extensively studied over the years from different points of view. They have intrinsic mathematical interest and are important in physics, where they serve as models for the quantum evolution of single particles with internal degrees of freedom and bounded hopping length on a lattice or graph. In this context, quantum walks were shown to exhibit many single and few particle effects, such as ballistic motion \cite{ambainis2001one, grimmett2004weak, AVWW2011JMP}, decoherence \cite{AVWW2011JMP, SpacetimeRandom}, dynamical localization \cite{joye_d_dim_loc, ASW2011JMP, Joye_Merkli}, and the formation of bound states \cite{molecules}. Recently, quantum walks were shown to provide a testbed for symmetry protected topological order \cite{KitaExploring} and bulk-boundary correspondence with a complete set of indices that are stable under compact as well as norm-continuous perturbations \cite{TopClass, Ti, WeAreSchur, F2W}. Complementary to this, quantum walks can be viewed as quantum mechanical analogues of classical random walks \cite{AhaDavZaq1993PRA}. Their increased ballistic spreading compared to classical diffusion makes them interesting for applications in quantum computing such as the element distinctiveness problem \cite{Ambainis}, universal quantum computation \cite{Lovett:2010ff}, and search algorithms \cite{SKW,portugal2013quantum}. 

From a mathematical point of view, a quantum walk is given by a unitary operator $W$ acting in a suitable Hilbert space with an underlying lattice or graph structure. As such, the long-time dynamical characteristics of the system are given by powers $W^t$ with $t \in \bbZ$, and one is naturally led to the study of the spectral problem of the unitary operator $W$. See \cite{BGVW, CGMV2010CPAM, DEFHV, DFO2016JMPA, AlainUnitaryBandMats, ewalks, morioka2019detection, locQuasiPer, KKMS2021} and references therein for a partial list of works that have analyzed quantum walks from the spectral perspective. 

The present paper aims to study the spectral properties of quantum walks in a homogeneous external magnetic field. We are motivated by work on electrons moving in a two-dimensional lattice under the influence of a uniform magnetic field, a physical system that has been intensively studied in recent decades; compare, e.g., \cite{hof76, Laughlin, TKNN}. The almost-Mathieu operator (and more generally the extended Harper's model) is one example of a model of an electron in a magnetic field, which inspired intense study in mathematics; see 
\cite{AJ2009Ann, AviJitoMarx2017Invent, AviYouZho2017DMJ,  BelSim1982JFA,  Han2018IMRN, Jitomirskaya1999Annals, JitoLiu2018Annals, MarxJito2017ETDS, Puig2004CMP} and references therein for a partial list. Despite the ostensible simplicity of the setup, the properties of this system are rich and deep, requiring extensive analysis to understand fully.

Motivated by this, we discuss in this paper two-dimensional quantum walks in homogeneous magnetic fields, which are constructed via a discrete analogue of minimal coupling \cite{CGWW2019JMP}. After a suitable choice of gauge, this leads us to the investigation of one-dimensional quantum walks with quasiperiodic coin distribution. Using intuition from the two-dimensional setting, we introduce a pair of coupling constants $\lambda_1,\lambda_2 \in [0,1]$ for the one-dimensional walk in a novel manner (cf.\ \eqref{eq:moddefs:slambdaDef} and \eqref{eq:moddefs:coinDef}). We then show that this partitions the parameter space into three regions. When $\lambda_1>\lambda_2$, the behavior of the shift dominates and the resultant operator has absolutely continuous spectrum. When $\lambda_1<\lambda_2$, the coin operator dominates, and one observes localization for typical frequencies. When $\lambda_1=\lambda_2$, one observes singular continuous spectrum for all but an explicit countable set of phases. We describe the main accomplishments broadly here and more precisely in Section~\ref{sec:modelanddefs}.

First, we give a complete account of the spectrum and the spectral type in a measure-theoretical sense (that is for a.e.\ frequency and a.e.\ phase), hence the title, which is naturally inspired by the titles of \cite{Damanik2009surv, Jitomirskaya1995ICMP, Last1995ICMP}.

Next, we answer a question posed by and to the authors of \cite{FOZ2017CMP}: how can one suitably incorporate a version of a ``coupling constant'' into the model of \cite{FOZ2017CMP} in such a manner that one sees a phase transition? Their model is given as a one-dimensional quantum walk, and each such walk is known to be equivalent to a CMV matrix \cite{CGMV2010CPAM,Simon2005OPUC1, Simon2005OPUC2} which in turn is specified by a sequence $\{\alpha_n\}_{n=-\infty}^\infty$ of \emph{Verblunsky coefficients} in $\bbD$. In the CMV setting, it seems natural to introduce a coupling constant by multiplying the Verblunsky coefficients by a scalar $\lambda$; however, this does not lead to fruitful results in the present setting. See also the discussion in \cite[Appendix~B]{Simon2005OPUC2}, especially item~14 in Section~B.1. Indeed, Simon says therein that the lack of suitable coupling constants ``impacted my ability to discuss almost periodic Verblunsky coefficients'', which suggests that our novel method of incorporating a coupling constant will have applications in other models.

Let us also point out: as a result of this connection, every result in our paper can be translated into a result about a suitable CMV matrix. For the reader who is unfamiliar with OPUC and CMV matrices, we have included a detailed description of the connection between CMV matrices and one-dimensional quantum walks. We have chosen to formulate our work in terms of quantum walks, since that makes the origin of the model more transparent.

The first main ingredient in the proofs of our results is the classification of cocycle dynamics: in the nomenclature of Avila, we classify the matrix cocycle induced by the stationary equation of the walk (cf.\ \eqref{eq:AlambdaCocycleDef}) and show that it is subcritical on the spectrum when $\lambda_1>\lambda_2$, supercritical on the spectrum when $\lambda_1<\lambda_2$, and critical on the spectrum when $\lambda_1=\lambda_2$. This discussion sweeps a few technicalities under the rug; the transfer matrix cocycle is not always well-defined for some values of the parameters and hence one works with a suitable regularization whose dynamics one can then characterize in the global theory scheme.

The second key ingredient is a suitable version of duality via a Fourier-type transform (cf.\ Theorems~\ref{t:aubryViaSolutions} and \ref{t:aubryOperator}); in analogy with the setting of Harper's model, we call this ``Aubry duality'' \cite{AubryAndre1980}. Up to a transpose, the duality transformation exchanges the coupling constants $\lambda_1$ and $\lambda_2$, which enables one to translate suitable reducibility or localization statements from one parameter region to another. In particular, the operators described herein are quasiperiodic unitary operators that obey a version of Aubry duality, and thus this work addresses a query from Li--Damanik--Zhou  \cite{LiDamZhou2021Preprint}.  The form of duality enables one to show that all three spectral types (absolutely continuous, singular continuous, and pure point) occur within the non self-dual regimes. 

As a byproduct of our model and the detailed cocycle analysis, we produce a family of quasiperiodic unitaries with positive Lyapunov exponents in certain parameter regions. As noted by Zhang \cite{Zhang2012Nonlin}, this is often difficult to accomplish. See also \cite{locQuasiPer, ewalks}.

\begin{figure}[h]
	\begin{center}
		\begin{tabular}{>{\raggedright\arraybackslash}p{.45\linewidth}>{\raggedleft\arraybackslash}p{.45\linewidth}}
			\includegraphics[width=.9\linewidth]{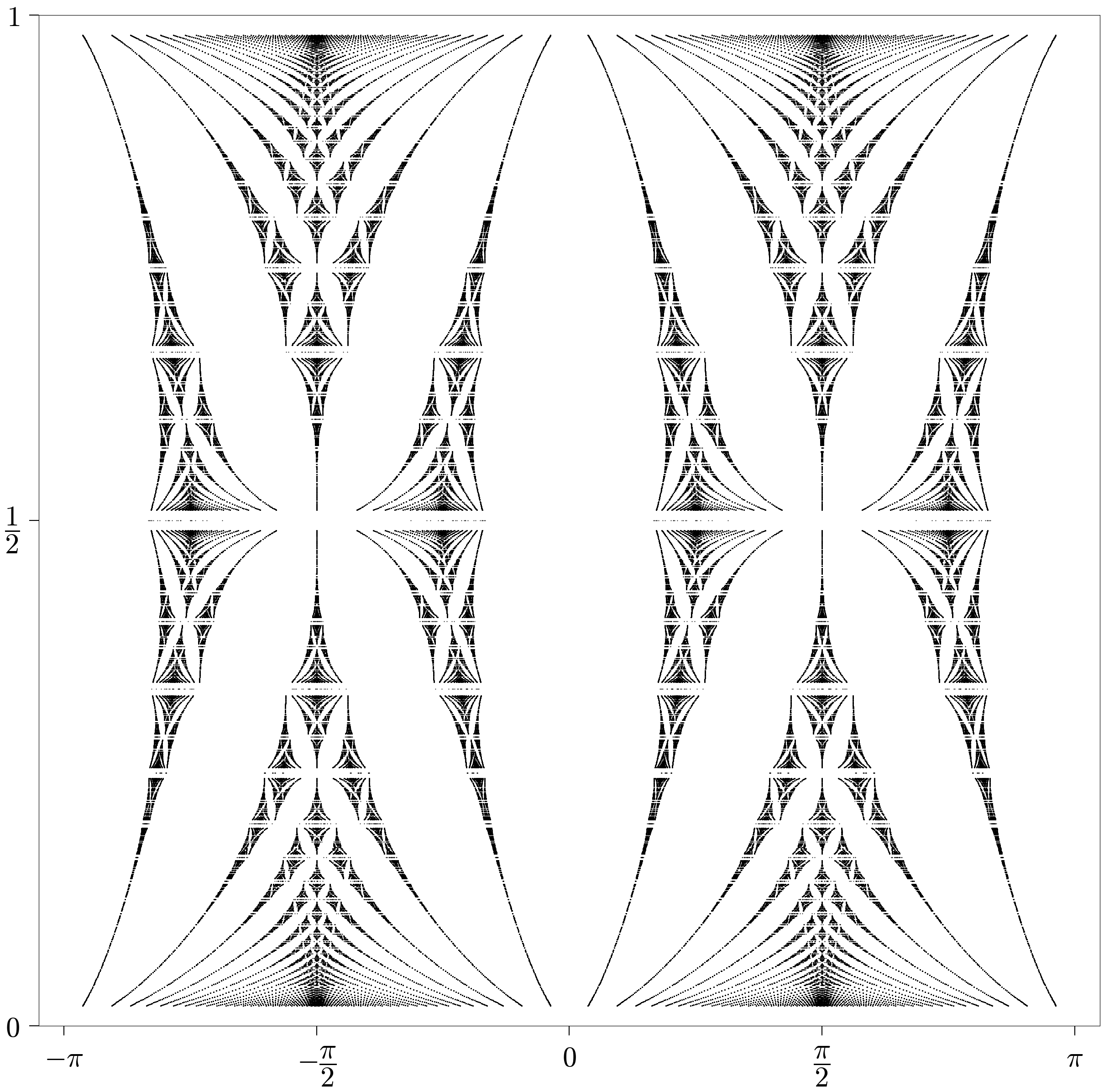}
			&
			\includegraphics[width=.9\linewidth]{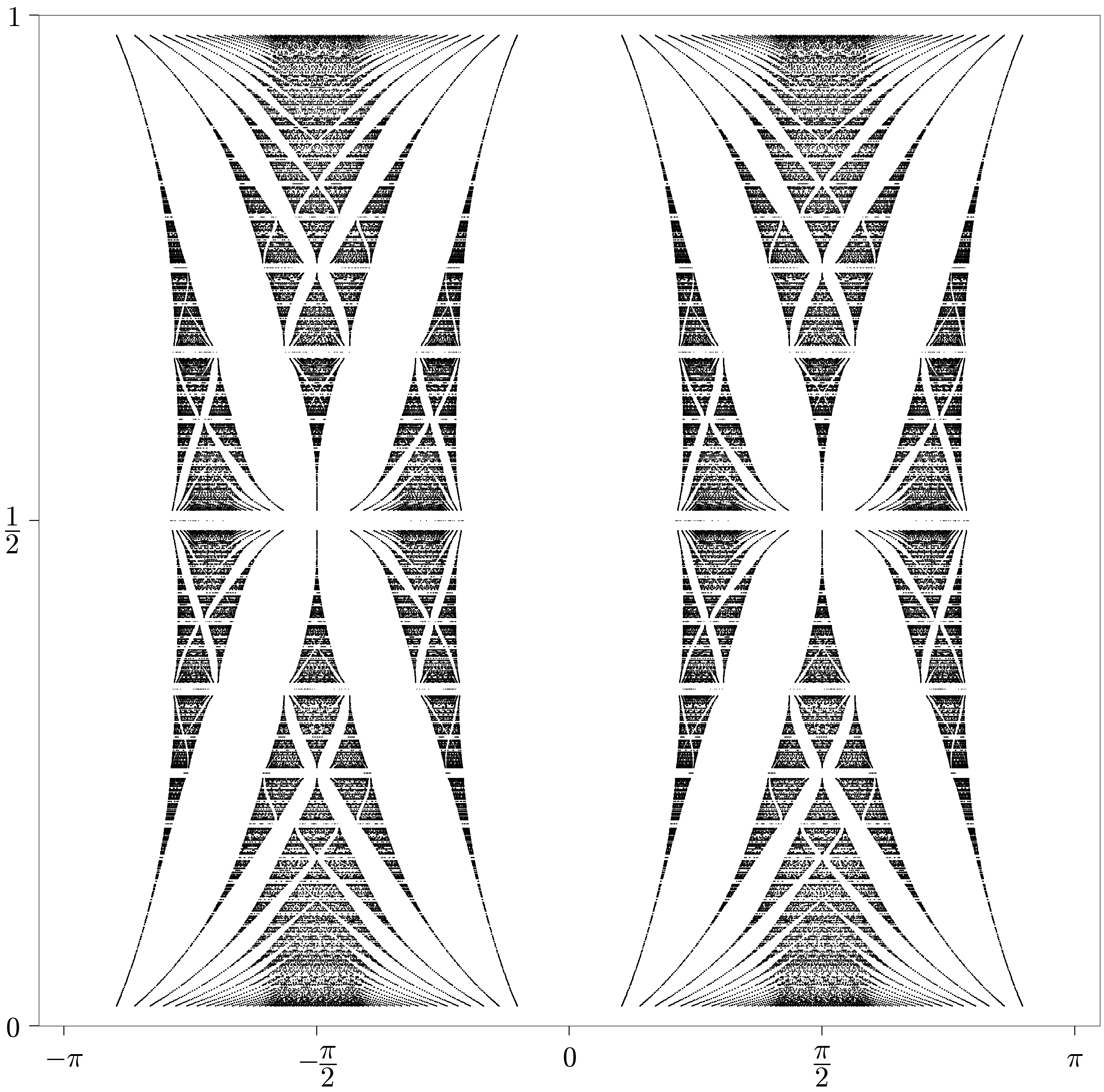}
		\end{tabular}
	\end{center}
	\caption{Spectrum of the UAMO for different (rational) $\Phi$ on the vertical axis and argument of the spectral parameter $z$ on the horizontal axis. Left: Critical case with $(\lambda_1,\lambda_2)=(1/\sqrt2,1/\sqrt2)$. Clearly visible is the fractal structure, which is reminiscent of (a quenched variant of) the Hofstadter butterfly \cite{hof76,papillon} (see \cite{CFGW2020LMP} for the spectrum for $(\lambda_1,\lambda_2)=(1,1)$). Right: Sub-critical case with $(\lambda_1,\lambda_2)=(1/\sqrt4,1/\sqrt2)$. The spectrum of the super-critical case $(\lambda_1,\lambda_2)=(1/\sqrt2,1/\sqrt4)$ is the same (see Corollary \ref{cor:aubryspecra}), but the spectral type is different. }
\end{figure}

It is worth emphasizing that there are two separate categories of challenges that had to be overcome for this work to be written. 
The first and most substantial challenge is to choose the correct model.
At the time when \cite{FOZ2017CMP} was written, it was not clear how to correctly incorporate coupling constants into the coins, and moreover, the naive guesses turned out to be incorrect.
After \cite{CFGW2020LMP}, one then was able to disentangle the correct choices from the physical model.

The second challenge is to suitably recontextualize techniques and ideas from the self-adjoint setting to the unitary setting.
This is sometimes easy, sometimes challenging, and sometimes impossible.
In the present case, the techniques fell into the first two classes. Our main goal was to present a complete picture in the sense that one observes all three types of cocycle dynamics, precise formulations of duality, and all three spectral types in suitable parameter regions. In order to achieve this while still keeping the length of the paper in check, we have aimed to spend less time on techniques and tools that are obvious generalizations from the self-adjoint setting in order to supply full details for problems that present genuine challenges and required more novel solutions.

Let us describe a few of the challenges. First, due to singularities that arise at complex phases in the non-critical cases, the transfer matrix cocycle is singular and hence one cannot immediately use the ``asymptotic cocycle'' method as in previous cases to compute the Lyapunov exponent. We use a topological argument to circumvent this difficulty: we show that the range of the determinant of the cocycle map has trivial winding around the origin in the two ``perturbative'' regions of large and small imaginary parts and use suitable continuity arguments to establish this for all imaginary parts away from the singularities. This is inspired by related works of Jitmirskaya--Marx for the (self-adjoint) extended Harper's model \cite{JitoMarx2012CMP}. 

 The calculations needed to establish the form of Aubry duality are quite delicate and require several non-obvious changes of coordinates. More generally, because of the more complicated structure of the operator and its transfer matrices, most calculations are more involved than the corresponding self-adjoint counterparts. Furthermore, as mentioned before, it is also not obvious what these analogous calculations should be.
 
 In the proof of continuous spectrum for the case of critical coupling, we need to deal with the lack of a symplectic symmetry of the cocycle. Concretely, we had to find a novel symmetry of the cocycle which was then different from the symplectic symmetry that is typically used in studying reducibility questions for Schr\"odinger cocycles. In the Schr\"odinger setting, one can simply exploit the fact that the transfer matrix is real and so invariant under complex conjugation, whereas the corresponding symmetry we had to use was more complicated. 

Moreover, in several of the arguments, it is needed to relate the transfer matrix cocycle to one in $\SL(2,\bbR)$ in order to apply the machinery of such cocycles. However, even after normalization, the cocycle for this model clearly does not belong to $\SL(2,\bbR)$; nevertheless, we found a novel conjugacy from the cocycle into $\SL(2,\bbR)$, which again is not at all obvious from the form of the cocycle.

Finally, our operators are not standard CMV matrices but rather \emph{generalized CMV matrices}, which allow for an additional complex phase in some parameters. This obstacle is the easiest to overcome: any generalized CMV matrix is unitarily equivalent to a genuine CMV matrix by a simple diagonal unitary operator. This allows one to translate results from CMV matrices to generalized CMV matrices with relative ease. For the reader's convenience, we describe this equivalence in Section~\ref{sec:CMV}. This will be discussed more thoroughly together with some additional applications in the forthcoming work \cite{CFLOZ}.

We wish to emphasize that, although there are often corresponding results on either the unitary or self-adjoint side, such results are often about \emph{classes} of operators (periodic, random, quasiperiodic), and not \emph{specific} models. That is to say, there is to our knowledge no dictionary that explicitly connects properties of Schr\"odinger operators  with CMV matrices or quantum walks in an explicit fashion.

The remainder of the paper is structured as follows. We defined the model and state our results in Section~\ref{sec:modelanddefs}. Section~\ref{sec:origins} discusses the two-dimensional quantum walks from which the main model of the paper is derived. In Section~\ref{sec:cocycle}, we discuss the associated transfer matrix cocycle in detail, characterizing the cocycle dynamics according to Avila's global theory in each parameter region. Section~\ref{sec:aubry} discusses and proves several manifestations of Aubry duality for the model, which is then put to use in Sections~\ref{sec:contspec}--\ref{sec:localization} to prove the remainder of our spectral results.

\bigskip

\subsection*{Acknowledgements} D.~C.~O.\ was supported in part by three grants from the Fundamental Research Grant Scheme from the Malaysian Ministry of Education (grant numbers  FRGS/1/2022/TK07/XMU/01/1, FRGS/1/2018/STG06/XMU/02/1 and FRGS/1/2020/STG06/XMU/02/1), a grant from the National Natural Science Foundation of China (grant number 12201524), and two Xiamen University Malaysia Research Funds (grant numbers XMUMRF/2023C11/
IMAT/0024 and XMUMRF/2020-C5/IMAT/0011). C.~C.\ was supported in part by the Deutsche Forschungsgemeinschaft (DFG, German Research Foundation) under the grant number 441423094. J.~F.\ was supported in part by National Science Foundation grant DMS 2213196 and Simons Foundation Collaboration Grant \#711663. The authors are grateful to Chris Marx for helpful conversations and to Luis Vel\'{a}zquez for patiently answering questions related to CMV matrices. The authors are grateful to Svetlana Jitomirskaya and Qi Zhou for helpful comments on an earlier draft and to the anonymous reviewers for helpful comments on this version.
\subsection*{Data Availability Statement}  Data sharing is not applicable to this article as no datasets were generated or analysed during the current study.

%!TEX root = CFO.tex

\section{Model and Results} \label{sec:modelanddefs}

\subsection{Setting}

We will consider a quantum walk $W:\ell^2(\bbZ) \otimes \bbC^2 \to \ell^2(\bbZ) \otimes \bbC^2 =: \scrH_1$, defined by a quasiperiodic sequence of coins. 
In the present section, we will define precisely the model, which arises via a one-dimensional representation of a two-dimensional quantum walk in a magnetic field. 
However, we emphasize that the one-dimensional model can be defined and studied without reference to the two-dimensional model.  We explain the connection to two-dimensional magnetic quantum walks in detail in Section~\ref{sec:origins}.

Let us write the standard basis of $\scrH_1$ as
\begin{equation}
 \delta_n^s = \delta_n \otimes e_s, \quad n \in \bbZ, \, s \in \{+,-\},
 \end{equation}
where $\{\delta_n:n \in \bbZ\}$ denotes the standard basis of $\ell^2(\bbZ)$ and $e_+ = [1,0]^\top$, $e_-=[0,1]^\top$ denotes the standard basis of $\bbC^2$. For an element $\psi \in \scrH_1$, we write its coordinates as $\psi_n^s = \langle \delta_n^s, \psi \rangle$ so that 
\[\psi = \sum_{n\in\bbZ}\left( \psi_n^+ \delta_n^+ + \psi_n^- \delta_n^- \right),\]
where $\psi^+,\psi^- \in \ell^2(\bbZ)$. Viewing $\scrH_1 \cong \ell^2(\bbZ,\bbC^2)$, it is natural to also put $\psi_n = [\psi_n^+, \psi_n^-]^\top$. Recall that a quantum walk $W$ is specified by the iteration of a suitable unitary operator. Choosing a sequence of unitary coins
\begin{equation}
Q_n = \begin{bmatrix} q_{n}^{11} & q_n^{12} \\ q_{n}^{21} & q_n^{22} \end{bmatrix} \in \bbU(2,\bbC),
\end{equation}
and a parameter $\lambda \in [0,1]$, the walks we consider are given by 
 \begin{equation} \label{eq:moddefs:walkDef}
 W = S_{\lambda}	Q,
 \end{equation}
where $Q$ acts coordinatewise via $Q_n$ (i.e., $[Q\psi]_n =Q_n\psi_n$) and $S_\lambda$ is given by
\begin{equation}\label{eq:moddefs:slambdaDef}
S_\lambda\delta_n^\pm = \lambda \delta_{n\pm 1}^\pm \pm \lambda' \delta_n^\mp, \quad  \lambda' = \sqrt{1-\lambda^2}.\end{equation}
The reader may readily verify that $S_\lambda$, $Q$, and $W$ are all unitary.
We call a walk of the form \eqref{eq:moddefs:walkDef} a \emph{split-step walk} with \emph{coupling constant} $\lambda$.

We are now ready to define our main model. Given coupling constants $\lambda_1,\lambda_2 \in [0,1]$, a frequency $\Phi \in \bbT:= \bbR/\bbZ$, and a phase $\theta \in \bbT$, we consider the walk 
\begin{equation}
	W_{\lambda_1,\lambda_2,\Phi,\theta} = S_{\lambda_1}Q_{\lambda_2,\Phi,\theta}
\end{equation}
where the coins $Q_n = Q_{\lambda_2,\Phi,\theta,n}$ are given by
\begin{align} \label{eq:moddefs:coinDef}
 Q_n = Q_{\lambda_2,\Phi,\theta,n} = 
 \begin{bmatrix}
\lambda_2 \cos(2\pi(n\Phi+\theta)) + i\lambda_2'
& -\lambda_2 \sin(2\pi(n\Phi+\theta))\\ 
 \lambda_2\sin(2\pi(n\Phi+\theta)) 
 & \lambda_2 \cos(2\pi(n\Phi+\theta)) - i\lambda_2'
\end{bmatrix}.
\end{align}
By an argument using minimality of $\theta \mapsto \theta+\Phi$ and strong operator convergence, there is for each $\lambda_1,\lambda_2$ and irrational $\Phi$ a fixed set $\Sigma_{\lambda_1,\lambda_2,\Phi}\subseteq \partial \bbD$ with 
\[\sigma(W_{\lambda_1,\lambda_2,\Phi,\theta}) = \Sigma_{\lambda_1,\lambda_2, \Phi} \text{ for every } \theta \in \bbT.\]
Working out the relevant product, the reader can check that the action of $W = W_{\lambda_1,\lambda_2,\Phi,\theta}$ in coordinates is given as
	\begin{align*} 
		[W\psi]_n^+ & = \lambda_1 \left((\lambda_2 \cos(2\pi((n-1)\Phi+\theta)) + i\lambda_2')\psi_{n-1}^+ -\lambda_2 \sin(2\pi((n-1)\Phi+\theta))\psi_{n-1}^-\right)  \\
		& \qquad   		-\lambda_1' \left( \lambda_2\sin(2\pi(n\Phi+\theta)) \psi_{n}^+ + (\lambda_2 \cos(2\pi(n\Phi+\theta)) - i\lambda_2')\psi_{n}^-\right),\\
		[W\psi]_n^- & = \lambda_1 \left( \lambda_2\sin(2\pi((n+1)\Phi+\theta)) \psi_{n+1}^+ + (\lambda_2 \cos(2\pi((n+1)\Phi+\theta)) - i\lambda_2')\psi_{n+1}^-\right) \\
		& \qquad + \lambda_1' \left((\lambda_2 \cos(2\pi(n\Phi+\theta)) + i\lambda_2')\psi_{n}^+  -\lambda_2 \sin(2\pi(n\Phi+\theta))\psi_{n}^-\right).
	\end{align*}
	See also Lemma~\ref{lem:w}. 
	
	For later use, notice that
\begin{equation} \label{eq:detQn=1}
\det(Q_{\lambda_2,\Phi, \theta, n}) = 1 \quad \forall \ \lambda_2 \in [0,1], \ \Phi,\theta \in \bbT, \ n \in \bbZ.
\end{equation}

 Because of the close parallels between this model and the almost-Mathieu operator (AMO), we propose calling $W_{\lambda_1,\lambda_2,\Phi,\theta}$ (an instance of) the \emph{unitary almost-Mathieu operator} (UAMO).
 The case $\lambda_1 = \lambda_2 = 1$ was studied in \cite{FOZ2017CMP,CFGW2020LMP,Shikano:2010id,Linden2009}.
 
\begin{remark}
Let us make a few remarks about the parameters.
\begin{enumerate}[label={\rm (\alph*)}]
\item Since $\sqrt{1-(-\lambda)^2} = \sqrt{1-\lambda^2}$, one has
\begin{equation*}
Q_{-\lambda_2,\Phi,\theta} = Q_{\lambda_2,\Phi,\theta+\frac{1}{2}}.
\end{equation*}
 Moreover, if $\mathcal{U}$ denotes the unitary transformation $\delta_n^\pm \mapsto \pm \delta_{-n}^\mp$, one can check that
\begin{equation*}
\mathcal{U} S_{\lambda_1}   \mathcal{U} = -S_{-\lambda_1} \quad 
\mathcal{U} Q_{\lambda_2,\Phi,\theta} \,  \mathcal{U} = -\overline{Q_{\lambda_2,-\Phi,\theta}}.
\end{equation*}
Thus, we take $\lambda_1,\lambda_2 \in [0,1]$ and do not consider $\lambda_j \in [-1,0)$.
\smallskip

\item  If $\Phi$ is rational, then $\{Q_n\}$ is periodic and hence its spectral \cite{Simon2005OPUC2} and dynamical \cite{AVWW2011JMP, DFO2016JMPA} properties are well-understood.\smallskip

\item  If $\lambda_1 = 0$, then $S_0\delta^\pm_n = \pm \delta_n^\mp$, so the spectrum of $W$ is pure point and all walkers are localized for trivial reasons. 
On the other hand, if $\lambda_2=0$ the coin sequence is constant. In analogy with the self-adjoint setting, in which one considers $\Delta+\lambda V$ with $\lambda$ a real coupling constant, the quantity $\lambda_2/\lambda_1$ appears to play a role similar to $\lambda$. In fact, we will see later that the most appropriate analogue of the coupling constant $\lambda$ appears to be
\begin{equation}\label{eq:model:lambda0Def} 
\lambda_0 = \lambda_0(\lambda_1,\lambda_2): = \frac{\lambda_2(1+\sqrt{1-\lambda_1^2})}{\lambda_1(1+\sqrt{1-\lambda_2^2})} =  \frac{\lambda_2(1+\lambda_1')}{\lambda_1(1+\lambda_2')}, 
\end{equation}
since we will show (Corollary~\ref{coro:lyapExact}) that the Lyapunov exponent is given by $\max\{0,\log\lambda_0\}$ on the spectrum.
 In particular, $\lambda_1=0$ corresponds to $\lambda_0 = \infty$, while $\lambda_2 = 0$ corresponds to setting $\lambda_0 = 0$.\smallskip

\item  In fact, the connection to the coupling constant of the almost-Mathieu operator is even stronger than the previous remark suggests. To wit: we will see that the form of Aubry duality for the present walk (modulo some niceties that we will discuss more precisely later in the paper) acts by exchanging $\lambda_1$ and $\lambda_2$, similar to the duality transformation $\lambda \mapsto 1/\lambda$ in the case of the AMO (compare \cite{AubryAndre1980, MarxJito2017ETDS}). Thus, we have three interesting regimes to study:

\definecolor{myblue}{RGB}{100,100,220}
\definecolor{myred}{RGB}{255,100,100}
\definecolor{mygreen}{RGB}{119,221,119}
\definecolor{myyellow}{RGB}{253,253,150}
\definecolor{myorange}{RGB}{255,200,100}
\def\legendwidth{1.75}
\def\legendhight{0.32}
\def\legendposx{1.7}
\def\legendposy{.5}
\def\circlerad{.8ex}
\def\circleshift{-.7ex}
\newcommand\Square[1]{+(-#1,-#1) rectangle +(#1,#1)}
\def\colorsquare#1{\tikz[baseline=\circleshift]\draw[#1,fill=#1,rotate=45] (0,0) \Square{\circlerad};}

\smallskip
\begin{minipage}{.6\textwidth}
\begin{description}
\item[\colorsquare{myblue}\ \ Subcritical] $\lambda_1>\lambda_2$ (equivalently, $\lambda_0 < 1$)\smallskip
\item[\colorsquare{myred}\ \ Critical] $\lambda_1=\lambda_2$ (equivalently, $\lambda_0=1$)\smallskip
\item[\colorsquare{myorange}\ \ Supercritical] $\lambda_1 < \lambda_2$ (equivalently, $\lambda_0 > 1$)
\end{description}
\end{minipage}
%\hspace{.03\textwidth}
\begin{minipage}{.2\textwidth}
	\begin{tikzpicture}
		[
		scale=1.5,
		font=\footnotesize
		]
		
		\definecolor{myblue}{RGB}{100,100,220}
		\definecolor{myred}{RGB}{255,100,100}
		\definecolor{mygreen}{RGB}{119,221,119}
		\definecolor{myyellow}{RGB}{253,253,150}
		\definecolor{myorange}{RGB}{255,200,100}
		
		% draw lines
		\draw[thick,black] (-.05,-.05) rectangle +(1.1,1.1);
		
		% ticks
		\foreach \i in {0,1}{
			\draw[align=left] (-.02,{\i}) -- (-.08,{\i});
			\draw[align=left] ({\i},-.02) -- ({\i},-.08);
		}
		
		% labels
		\draw (-.05,0) node[left, align=left] {$0$};
		\draw (-.05,1) node[left, align=left] {$1$};
		\draw (0,-.05) node[below, align=center] {$0$};
		\draw (1,-.05) node[below, align=center] {$1$};
		
		\draw (0,.5) node[left,align=right] {$\lambda_2$};
		\draw (.5,0) node[below,align=center] {$\lambda_1$};
		
		% content		
		\path[preaction={fill=white}, pattern=north west lines, pattern color=myblue] (0,0) -- (1,1) -- (1,0) -- cycle;
		\path[preaction={fill=white}, pattern=north east lines, pattern color=myorange] (0,0) -- (1,1) -- (0,1) -- cycle;
		
		\draw[very thick, myred] (0,0) -- (1,1);
		
		% legend
		\def\legendwidth{1.75}
		\def\legendhight{0.32}
		\def\legendposx{1.7}
		\def\legendposy{.5}
		\def\circlerad{1ex}
		\def\circleshift{-0.5ex}
		\def\colorsquare#1{\tikz[baseline=\circleshift]\draw[#1,fill=#1,rotate=45] (0,0) \Square{\circlerad};}
	\end{tikzpicture}
\end{minipage}

Later in the paper, we will show that the nomenclature is consistent with the characterization of cocycle dynamics according to Avila's global theory (cf.\ Theorem~\ref{t:cocycleREG}).
\smallskip

\item	Naturally, one is interested in the \emph{dynamical} behavior of the walk as well, that is, the spreading (or lack thereof) associated with the unitary group $\{W^t\}_{t \in \bbZ}$. Here we note that (at least for Diophantine $\Phi$) one appears to observe a transition from ballistic motion in the subcritical regime to diffusive motion in the critical case to localization in the supercritical region. For the frequency $\Phi = (\sqrt{5}-1)/2$ and phase $\theta = 0$, we plot the standard deviation of the position operator as a function of time in Figure~\ref{fig:unitaryDynamics}.
		\begin{figure}[h]
			\begin{center}
				\includegraphics[width=.5\textwidth]{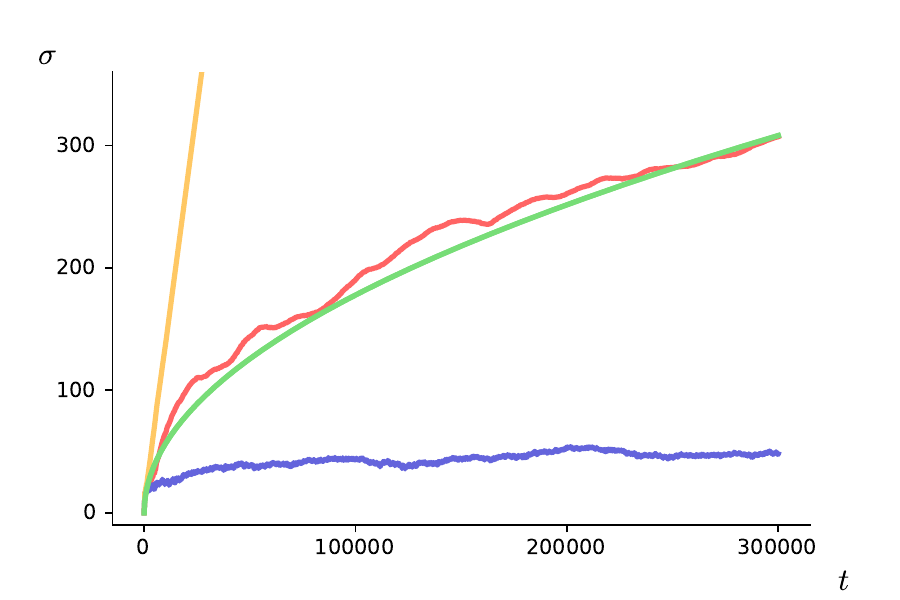}
			\end{center}
			\caption{Different spectral types have drastically different dynamical behaviour, even for very small changes in the coupling constants as can be seen from the square of the variance of the position operator: fixing $\Phi=(1-\sqrt{5})/2$ and $\theta=0$, the orange line corresponds to the subcritical case with $(\lambda_1,\lambda_2)=(1,1-10^{-4})$ where $\sigma\sim t$, the red curve is critical with $(\lambda_1,\lambda_2)=(1,1)$ and the localized blue curve with $\sigma\sim\text{const}$ is supercritical with $(\lambda_1,\lambda_2)=(1-10^{-4},1)$. The green curve with $\sigma\sim \sqrt{t}$ is plotted for reference. \label{fig:unitaryDynamics}}
		\end{figure}
				
\end{enumerate}
\end{remark}

\subsection{Results}

Let us now describe our main results. First, we discuss the spectral type in each parameter region for a.e.\ frequency and phase.
\begin{theorem} \label{t:maintype}  Assume $\lambda_1,\lambda_2 \in [0,1]$.
\begin{enumerate}[label={\rm (\alph*)}]
\item \label{t:maintype:sub} If $\lambda_1 > \lambda_2$, the spectral type of $W_{\lambda_1,\lambda_2,\Phi,\theta}$ is purely absolutely continuous for a.e.\ $\Phi$ and $\theta$.\smallskip

\item \label{t:maintype:super} If $\lambda_1 < \lambda_2$, the spectral type of $W_{\lambda_1,\lambda_2,\Phi,\theta}$ is pure point for a.e.\ $\Phi$ and $\theta$.\smallskip

\item \label{t:maintype:gordon} If $\lambda_1 < \lambda_2$, the spectral type of $W_{\lambda_1,\lambda_2,\Phi,\theta}$ is purely singular continuous for generic $\Phi$ and all $\theta$.\smallskip

\item \label{t:maintype:criticalCantor} If $0 < \lambda_1 = \lambda_2 \leq 1$, then $\Sigma_{\lambda_1,\lambda_2,\Phi}$ is a Cantor set of zero Lebesgue measure for every irrational $\Phi$.   \smallskip

\item \label{t:maintype:critical} If $0 < \lambda_1 = \lambda_2 < 1$, the spectral type of $W_{\lambda_1,\lambda_2,\Phi,\theta}$ is purely singular continuous for all irrational $\Phi$ and all but countably many $\theta$.   \smallskip

\item \label{t:maintype:criticalFOZ} If $\lambda_1 = \lambda_2 = 1$, the spectral type of $W_{\lambda_1,\lambda_2,\Phi,\theta}$ is purely singular continuous for all irrational $\Phi$ and a.e.\ $\theta$.
\smallskip

\item \label{t:maintype:lambda1=0} For any $0\leq\lambda_2\leq1$,  $\Phi \in \bbT$, and $\theta \in \bbT$, the spectral type of $W_{0,\lambda_2,\Phi,\theta}$ is pure point. For every irrational $\Phi$, $\Sigma_{0,\lambda_2,\Phi} = \{z \in \partial \bbD : |\Re(z)| \leq \lambda_2 \}$.
\smallskip

\item \label{t:maintype:lambda2=0} For any $0<\lambda_1\leq1$,  $\Phi \in \bbT$, and $\theta \in \bbT$, the spectral type of $W_{\lambda_1,0 ,\Phi,\theta}$ is purely absolutely continuous. For every irrational $\Phi$, $\Sigma_{\lambda_1,0,\Phi} = \{z \in \partial \bbD : |\Re(z)| \leq \lambda_1 \}$. 
\end{enumerate}
\end{theorem}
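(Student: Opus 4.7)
Theorem~\ref{t:maintype} is a compendium that packages the main spectral consequences of the paper, so my plan is to reduce each clause to one of the following master inputs developed in Sections~\ref{sec:cocycle}--\ref{sec:localization}: the cocycle classification (Theorem~\ref{t:cocycleREG}), Aubry duality (Theorems~\ref{t:aubryViaSolutions} and \ref{t:aubryOperator}) which exchanges $\lambda_1$ and $\lambda_2$ modulo a transpose, and the exact Lyapunov formula $L \equiv \max\{0,\log\lambda_0\}$ on the spectrum (Corollary~\ref{coro:lyapExact}). Organizing the eight parts into natural groupings: (a)--(c) cover the non-critical regimes, (d)--(f) the self-dual critical regime, and (g)--(h) are the degenerate cases in which a coupling vanishes.

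\textbf{Parts (a)--(c).} For (a) with $\lambda_1 > \lambda_2$, subcriticality of the cocycle on $\Sigma$ gives $L \equiv 0$ there, so unitary Kotani theory (via the CMV encoding described in the appendix) identifies the essential support of the a.c.\ spectrum with $\Sigma$; a Deift--Simon/Last--Simon exclusion of embedded singular spectrum for a.e.\ $\theta$, or equivalently the almost-reducibility that comes with subcriticality in Avila's global theory, promotes this to purely absolutely continuous spectrum. For (b), I feed the reducibility information from (a) through Aubry duality: the operator at $(\lambda_1,\lambda_2)$ with $\lambda_1 < \lambda_2$ corresponds to one at $(\lambda_2,\lambda_1)$ in the subcritical regime (up to transpose), and extended states on the subcritical side translate into exponentially decaying $\ell^2$ eigenfunctions on the supercritical side, yielding pure point spectrum for a.e.\ $(\Phi,\theta)$. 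For (c), a Gordon-type lemma in the unitary setting rules out $\ell^2$ solutions for a generic (Liouville) set of $\Phi$; since positivity of $L$ from Corollary~\ref{coro:lyapExact} simultaneously kills the a.c.\ part, only singular continuous spectrum can survive.

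\textbf{Parts (d)--(f).} In the critical regime $\lambda_1 = \lambda_2$, the cocycle is critical and the operator is essentially self-dual. For (d), I transcribe the zero-measure argument familiar from the AMO (Avila--Jitomirskaya/Last): a periodic approximation/trace calculation, sharpened on the self-dual locus, shows $|\Sigma_{\lambda,\lambda,\Phi}| = 0$ for every irrational $\Phi$, which produces a Cantor structure and a fortiori excludes absolutely continuous spectrum. For (e), a Delyon--Petritis type rank-one argument, exploiting self-duality to force any eigenvalue to move rigidly with $\theta$, shows that a given energy can be an eigenvalue for at most countably many phases; combined with (d), this gives purely singular continuous spectrum for all but countably many $\theta$. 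Part (f) then follows by combining the absence of a.c.\ spectrum from (d) with the absence of point spectrum established for $\lambda_1 = \lambda_2 = 1$ in \cite{FOZ2017CMP,CFGW2020LMP}.

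\textbf{Parts (g)--(h) and the main obstacle.} When $\lambda_1 = 0$, the shift $S_0$ couples only sites with the same $n$, so $W$ is a direct sum over $n \in \bbZ$ of explicit $2 \times 2$ unitaries; pure point spectrum is automatic and the orbit of the irrational rotation by $\Phi$ traces out the claimed set $\{z \in \partial\bbD : |\Re(z)| \leq \lambda_2\}$. When $\lambda_2 = 0$ the coins are constant, $W$ has constant coefficients, and Fourier transform diagonalizes it, producing the explicit purely absolutely continuous arc. The main obstacle in the whole program is the localization step in (b): this is where one must both verify positivity of $L$ throughout $\Sigma$ (Corollary~\ref{coro:lyapExact}) and transport reducibility information through a duality that involves a transpose, exchanging full-line and half-line structures in a way that is not fully transparent. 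Adapting nonperturbative localization to this unitary setting, and carefully tracking this transpose, is the technically heaviest portion of the proof; the rest of the theorem is essentially a packaging of more standard arguments around this core.
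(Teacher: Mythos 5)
Your overall architecture (cocycle classification, duality, exact Lyapunov formula) is the right one, but the logical flow between parts (a) and (b) is reversed relative to the paper, and your direction contains a genuine gap. You propose to prove (a) first via subcriticality, Kotani theory, and almost reducibility, and then obtain (b) by ``translating extended states on the subcritical side into exponentially decaying $\ell^2$ eigenfunctions on the supercritical side.'' Duality does not work in that direction: Theorem~\ref{t:aubryViaSolutions} carries $\ell^2$ (in fact $\ell^1$) eigenfunctions to bounded extended states, and there is no elementary converse producing decaying eigenfunctions, let alone a complete set of them, from reducibility of the dual. Moreover, Remark~\ref{rem:subcritical} explicitly notes that the almost-reducibility route to purely a.c.\ spectrum is \emph{not} available in the needed form for this class of operators, so your proof of (a) cannot be run as stated. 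The paper instead proves (b) directly: the Herman estimate (Theorem~\ref{thm:posle}) gives uniform positivity of $L$ when $\lambda_1<\lambda_2$, and Wang--Damanik's CMV version of Bourgain--Goldstein yields Anderson localization for a.e.\ $(\Phi,\theta)$; then (a) follows from (b) by dualizing the localized eigenfunctions to bounded solutions, so that no subordinate solutions exist on a supporting set and subordinacy theory gives purely absolutely continuous spectrum.

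Two further gaps. For (d), a periodic-approximation/trace argument of Last type gives zero measure only for a.e.\ frequency; to reach \emph{every} irrational $\Phi$ the paper uses the Avila--Krikorian scheme after conjugating the cocycle into $\SL(2,\bbR)$ (a nontrivial step here, since the cocycle is neither in $\SL(2,\bbR)$ nor $\SU(1,1)$): for a.e.\ $z$ with $L(z)=0$ the cocycle is reducible, hence subcritical, which contradicts criticality on the spectrum unless $|\Sigma|=0$. For (e), your counting argument (``each energy is an eigenvalue for at most countably many phases'') does not imply that all but countably many phases have empty point spectrum, since there are uncountably many energies; the paper's Theorem~\ref{t:contspec:selfdual} instead runs an Avila-style self-duality contradiction using the reflection symmetry of Proposition~\ref{prop:cocycRefl}, excluding point spectrum for the explicit set of $\theta$ with $\theta-1/4$ irrational with respect to $\Phi$. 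Parts (c), (f), (g), (h) of your outline do match the paper's arguments.
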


\begin{remark}
Let us make some comments about Theorem~\ref{t:maintype}.
\begin{enumerate}[label={\rm (\alph*)}]
\item The result in Theorem~\ref{t:maintype}.\ref{t:maintype:sub} is obtained from the result in Theorem~\ref{t:maintype}.\ref{t:maintype:super} by duality, so the sets of $\Phi$ are the same in each part.\smallskip
\item The generic set in Theorem~\ref{t:maintype}.\ref{t:maintype:gordon} consists of those irrational $\Phi$ satisfying a suitable Liouville condition, namely,
\[ \limsup_{k\to\infty} \frac{\log q_{k+1}}{q_k} = \infty, \]
where $q_k$ denotes the $k$th continued fraction denominator associated with $\Phi$.\smallskip
\item Theorem~\ref{t:maintype}.\ref{t:maintype:criticalFOZ} and the case $\lambda_1=\lambda_2=1$ from Theorem~\ref{t:maintype}.\ref{t:maintype:criticalCantor} were proved in \cite{FOZ2017CMP} and are listed here for the sake of completeness. We remark that the full-measure set of $\theta$ was written down explicitly in \cite{FOZ2017CMP}.\smallskip
\item 
	In the region $0<\lambda_1=\lambda_2\leq1$ numerical evidence suggests that the spectrum is purely singular continuous for all phases. In the self-adjoint AMO setting this was proved in \cite{Jitomirskaya2021Advances} using a particular choice of gauge in which the AMO becomes singular. However, we were unable to apply this gauge to obtain fruitful results in the current model.
	\smallskip

\item Parts~\ref{t:maintype:lambda1=0} and \ref{t:maintype:lambda2=0} of Theorem~\ref{t:maintype} are trivial and are listed for the sake of completeness.
Nevertheless, for the reader's convenience we have included the proofs to these statements at the end of Section \ref{sec:tms}.
\smallskip

\item If $\lambda_1>0$, $\lambda_2 < 1$, and $\Phi$ is rational, then $W$ has purely absolutely continuous spectrum for every phase. 
\smallskip

\item If $\lambda_1>0$, $\lambda_2 = 1$, and $\Phi = p/q$ is rational in lowest terms, then there are two possibilities. If $\theta \in \frac{1}{4}+\frac{1}{q}\bbZ+ \frac{1}{2}\bbZ$, then again $W$ decouples into an infinite direct sum and has pure point spectrum with eigenvalues of infinite multiplicity corresponding to compactly supported eigenfunctions. Otherwise, the spectrum of $W$ is purely absolutely continuous.\smallskip

\item In the region $\lambda_1>\lambda_2$, we expect that purely absolutely continuous spectrum holds for all $\theta$ and $\Phi$ (note that $\lambda_1>\lambda_2$ implies $\lambda_1>0$ and $\lambda_2<1$ so the examples with compactly supported eigenfunctions do not occur in this regime). The proof of this would take us outside the scope of the current paper; we plan to investigate it in future work. \end{enumerate}
\end{remark}

There are two crucial ingredients involved in the proofs of the main results: (1) a suitable version of Aubry duality for the operator family $\{W_{\lambda_1,\lambda_2,\Phi,\theta} : \theta \in \bbT\}$ and (2) a careful analysis of the associated transfer matrix cocycle.

The form of Aubry duality for this model demonstrates that dual of the family $\{W_{\lambda_1,\lambda_2,\Phi,\theta} : \theta \in \bbT\}$ is given by $\{ W_{\lambda_2,\lambda_1,\Phi,\theta}^\top : \theta \in \bbT\}$ in a suitable sense. 
Let us make this precise.
Define
\begin{equation}
W_{\lambda_1,\lambda_2, \Phi, \theta}^\aubrydual = W_{\lambda_2,\lambda_1,\Phi,\theta}^\top,
\end{equation}
where $\top$ denotes the transpose.

\begin{theorem}[Aubry duality via solutions] \label{t:aubryViaSolutions}
Let $\lambda_1, \lambda_2\in [0,1]$, $\Phi \in \bbT$ irrational, and $\theta \in \bbT$ be given, and suppose $\psi \in \scrH_1$ satisfies $W_{\lambda_1,\lambda_2,\Phi,\theta}\psi = z\psi$ for some $z \in \partial \bbD$.
If $\varphi^\xi$ is defined for $\xi \in \bbT$ by
\begin{equation} \label{eq:models:dualSolutionDef}
 \begin{bmatrix} \varphi^{\xi,+}_n \\ \varphi_n^{\xi,-} \end{bmatrix} 
= \frac{1}{\sqrt{2}} e^{2\pi i n \theta} \begin{bmatrix} 1 & i \\ i & 1 \end{bmatrix} \begin{bmatrix} \widecheck{\psi}^+(n\Phi+\xi) \\ \widecheck{\psi}^-(n\Phi+\xi) \end{bmatrix}, 
\end{equation}
where $\widecheck{\cdot}$ denotes the inverse Fourier transform,
then 
\begin{equation} \label{eq:tdualsolution:Wxi=phixi}
W_{\lambda_1,\lambda_2, \Phi, \xi}^\aubrydual \varphi^{\xi} = z\varphi^\xi
\end{equation} for a.e.\ $\xi$. Furthermore, if $\psi \in \ell^1(\bbZ,\bbC^2)$, then \eqref{eq:tdualsolution:Wxi=phixi} holds for all $\xi$.
\end{theorem}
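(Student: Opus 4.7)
The plan is to verify the claim by direct Fourier-transform computation, in the spirit of the classical Aubry duality argument for the almost-Mathieu operator, adapted to the unitary two-component setting.

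First, I would expand $W_{\lambda_1,\lambda_2,\Phi,\theta}\psi = z\psi$ in coordinates using $W = S_{\lambda_1}Q$ together with \eqref{eq:moddefs:slambdaDef} and \eqref{eq:moddefs:coinDef}. This produces a pair of recurrence relations indexed by $n$ whose coefficients involve $\lambda_1,\lambda_1'$, the constants $\pm i\lambda_2'$ coming from the imaginary diagonal of $Q_n$, and the trigonometric quantities $\lambda_2\cos(2\pi(n\Phi+\theta))$, $\lambda_2\sin(2\pi(n\Phi+\theta))$. Writing $\cos$ and $\sin$ as $\tfrac12 e^{\pm 2\pi i(n\Phi+\theta)}$, the $n$-dependence of every coefficient becomes a sum of pure exponentials $e^{\pm 2\pi i n\Phi}$, weighted by constants involving $\lambda_2,\lambda_2',e^{\pm 2\pi i\theta}$.

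Second, working initially under the stronger hypothesis $\psi\in\ell^1(\bbZ,\bbC^2)$ so that $\widecheck\psi^\pm$ are continuous on $\bbT$, I would use the fact that multiplication by $e^{2\pi i n\Phi}$ in $n$-space corresponds to translation by $\Phi$ in the Fourier variable. Substituting the ansatz \eqref{eq:models:dualSolutionDef} into the target equation $W^\aubrydual_{\lambda_1,\lambda_2,\Phi,\xi}\varphi^\xi = z\varphi^\xi$ (recalling $W^\aubrydual_{\lambda_1,\lambda_2,\Phi,\xi} = W_{\lambda_2,\lambda_1,\Phi,\xi}^\top$) and expanding both sides, I would verify termwise that the resulting identity on the values $\widecheck\psi^\pm((n+k)\Phi+\xi)$, $k$ in a bounded set of integers, is exactly the Fourier transform of the original recurrence after swapping $\lambda_1\leftrightarrow\lambda_2$. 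The unitary $\tfrac1{\sqrt2}\bigl[\begin{smallmatrix}1 & i \\ i & 1\end{smallmatrix}\bigr]$ is the rotation that conjugates the ``constant'' piece $i\lambda_2' Z$ of $Q_n$ (where $Z = \mathrm{diag}(1,-1)$) into a form that, after transposition, lines up with $S_{\lambda_2}$; the overall phase $e^{2\pi i n\theta}$ absorbs the phase $\theta$ of the original coin into the translation parameter $\xi$ of the dual coin.

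Third, to remove the $\ell^1$ hypothesis I would reinterpret the manipulations as identities in $L^2(\bbT)$: since $\psi\in\ell^2$, we have $\widecheck\psi^\pm\in L^2(\bbT)$, and for each fixed $m\in\bbZ$ the $m$-th coordinate of $W^\aubrydual_{\lambda_1,\lambda_2,\Phi,\xi}\varphi^\xi - z\varphi^\xi$ is a finite linear combination of translates of $\widecheck\psi^\pm$ that vanishes in $L^2$. Pointwise vanishing for a.e.\ $\xi$ follows, and a countable union over $m\in\bbZ$ preserves the nullset property, yielding the a.e.\ claim; when $\psi\in\ell^1$, the pointwise claim holds for every $\xi$ by continuity.

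The main obstacle is bookkeeping: one has to track carefully how the transpose, the swap $\lambda_1\leftrightarrow\lambda_2$, and the Fourier transform interact, and confirm that the specific rotation and phase appearing in \eqref{eq:models:dualSolutionDef} are precisely those that convert the Fourier-transformed system into the canonical form of $W_{\lambda_2,\lambda_1,\Phi,\xi}^\top$. No deep analytic input is required beyond Plancherel; the content of the theorem is essentially algebraic, and the ``a.e.''~in the $\ell^2$ case is forced purely by the measurability of $\widecheck\psi^\pm$.
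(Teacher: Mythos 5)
Your proposal is correct and follows essentially the same route as the paper: take the inverse Fourier transform of the coordinate form of $W\psi=z\psi$, convert the exponentials $e^{\pm 2\pi i n\Phi}$ into translations by $\pm\Phi$, form the combinations given by the rotation $\tfrac1{\sqrt2}\bigl[\begin{smallmatrix}1&i\\ i&1\end{smallmatrix}\bigr]$, and verify directly that the resulting relations are the coordinate form of $W_{\lambda_2,\lambda_1,\Phi,\xi}^\top\varphi^\xi=z\varphi^\xi$, with the a.e.\ versus everywhere dichotomy handled exactly as you describe via $L^2$ identities versus continuity under the $\ell^1$ hypothesis.
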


Let us emphasize that $\varphi^\xi \in \bbC^\bbZ \otimes \bbC^2$ is not assumed to belong to the Hilbert space $\scrH_1 = \ell^2(\bbZ) \otimes \bbC^2$, so one should interpret $W_{\lambda_1, \lambda_2, \Phi, \xi}^\aubrydual$ as a finite difference operator in the previous theorem.
Indeed, when $\psi$ is an Anderson localized (i.e., exponentially decaying) eigenvector of $W_{\lambda_1,\lambda_2, \Phi, \theta}$, then $\varphi^\xi$ is in general an extended state: bounded but not decaying.

One can also express duality of \emph{generalized} eigenfunctions via suitable direct integrals.  The following formulation is useful.

\begin{theorem}[Aubry duality via direct integrals] \label{t:aubryOperator}
For any $\lambda_1$, $\lambda_2$, and $\Phi$, 
\[\int_\bbT^\oplus W_{\lambda_1,\lambda_2,\Phi,\theta}\, d\theta\cong \int_\bbT^\oplus W_{\lambda_1,\lambda_2,\Phi,\theta}^\aubrydual \, d\theta,\]
where $\cong$ denotes unitary equivalence of operators on $\displaystyle \int_\bbT^\oplus \! \scrH_1 \, d\theta$.
\end{theorem}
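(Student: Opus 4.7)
The plan is to build an explicit unitary $\mathcal{F}$ on $\int^\oplus_\bbT \scrH_1\, d\theta$ using the solution-level intertwining \eqref{eq:models:dualSolutionDef} as the template. After identifying $\int^\oplus_\bbT \scrH_1\, d\theta$ with $L^2(\bbT) \otimes \ell^2(\bbZ) \otimes \bbC^2$ and writing elements as $\Psi(\theta,n,s)$ with $s \in \{+,-\}$, define
\[
(\mathcal{F}\Psi)(\xi,n,s) := \frac{1}{\sqrt 2}\sum_{s'} R_{ss'}\int_\bbT e^{2\pi i n\theta}\, \widecheck{\Psi(\theta,\cdot,s')}(n\Phi+\xi)\, d\theta,
\]
where $R = \bigl[\begin{smallmatrix}1 & i \\ i & 1\end{smallmatrix}\bigr]$ and $\widecheck{\cdot}$ denotes the inverse Fourier transform in the $n$-slot. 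This is precisely the formula from \eqref{eq:models:dualSolutionDef}, with the $\theta$-integration producing a single section of the direct integral from the $\theta$-indexed family of maps on the fibers.

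Verifying unitarity of $\mathcal{F}$ proceeds by factoring it as a composition of five elementary unitaries: the fiber rotation $R_0 := \tfrac{1}{\sqrt 2} R$ on $\bbC^2$; the Fourier transform $L^2(\bbT_\theta)\to\ell^2(\bbZ_k)$; the index reflection $k\mapsto -n$ on $\ell^2(\bbZ)$; multiplication by the unimodular kernel $e^{-2\pi i mn\Phi}$ on $\ell^2(\bbZ_m\times\bbZ_n)$; and the (conjugate) inverse Fourier transform $\ell^2(\bbZ_m)\to L^2(\bbT_\xi)$. Each factor is unitary, hence so is their composition, and a direct computation shows that this composition recovers the displayed formula for $\mathcal{F}$.

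The final step is to verify $\mathcal{F}\,\mathbf{W} = \mathbf{W}^\aubrydual\,\mathcal{F}$, where $\mathbf{W} = \int^\oplus W_{\lambda_1,\lambda_2,\Phi,\theta}\, d\theta$ and $\mathbf{W}^\aubrydual = \int^\oplus W^\aubrydual_{\lambda_1,\lambda_2,\Phi,\theta}\, d\theta$. Work on the dense subspace consisting of $\Psi$ that are smooth in $\theta$ and finitely supported in $n$, so that $\Psi(\theta,\cdot)\in\ell^1(\bbZ,\bbC^2)$ for every $\theta$. The proof of Theorem~\ref{t:aubryViaSolutions} in fact yields an operator-level algebraic identity valid without any eigenvalue hypothesis: letting $\mathcal{T}_\theta: \psi \mapsto \varphi^\cdot$ denote the linear map defined by \eqref{eq:models:dualSolutionDef}, one has, for every $\xi$,
\[
W^\aubrydual_{\lambda_1,\lambda_2,\Phi,\xi}(\mathcal{T}_\theta\psi)(\xi) = \mathcal{T}_\theta\bigl(W_{\lambda_1,\lambda_2,\Phi,\theta}\psi\bigr)(\xi).
\]
Applying this identity pointwise in $\theta$ to $\Psi(\theta,\cdot)$ and integrating in $\theta$ gives $(\mathbf{W}^\aubrydual\mathcal{F}\Psi)(\xi) = (\mathcal{F}\mathbf{W}\Psi)(\xi)$ on the dense subspace, which extends by continuity to the whole direct integral.

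The main obstacle is extracting and verifying the operator-level identity above: Theorem~\ref{t:aubryViaSolutions} is formulated only for eigenvectors, but the calculation behind it is the claim that, after conjugation by Fourier and $R_0$, the coin $Q_{\lambda_2,\Phi,\theta}$ transforms into a shift of coupling $\lambda_2$ (playing the role of $S_{\lambda_2}$ in $W^\aubrydual$) while the shift $S_{\lambda_1}$ transforms into a coin of coupling $\lambda_1$ (playing the role of $Q_{\lambda_1,\Phi,\xi}$). The transpose in $W^\aubrydual := W^\top$ then accounts for the fact that the Fourier transform switches the sides on which the various operators act. Carrying out this bookkeeping while keeping conventions for signs, transposes, and normalizations consistent is the real content of the proof; everything else is a clean application of Parseval and Fubini.
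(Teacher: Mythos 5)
Your proposal is correct, and the conjugating unitary you construct is, up to sign conventions in the Fourier kernel and a harmless unimodular factor in the $\bbC^2$ rotation, exactly the operator $\bm{\calA}_\Phi\bm{\calX}$ that the paper uses (the paper's $\bm{\calA}_\Phi$ is your composition of the $\theta\to k$ Fourier transform, index reflection, multiplication by $e^{\pm 2\pi i mn\Phi}$, and the $m\to\xi$ Fourier transform; its $\bm{\calX}$ is your fiber rotation $R_0$). Where you differ is in how the intertwining $\mathcal{F}\bm{W}=\bm{W}^\aubrydual\mathcal{F}$ is verified. The paper does \emph{not} derive Theorem~\ref{t:aubryOperator} from Theorem~\ref{t:aubryViaSolutions}; instead it proves an independent operator identity (Theorem~\ref{t:aubry:abstract}) by factoring $\bm{W}_{\lambda_1,\lambda_2,\Phi}=\bm{J}_{\lambda_1}\bm{U}_{\lambda_2,\Phi}$ in the spirit of the CGMV $\mathcal{LM}$-factorization and checking how each factor transforms separately under $\bm{\calX}$ and $\bm{\calA}_\Phi$ (the coin becomes a twisted shift $\bm{V}$, the shift becomes $\bm{V}^\top$, and these swap under $\bm{\calA}_\Phi$). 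You instead reuse the computation behind Theorem~\ref{t:aubryViaSolutions}, observing correctly that the eigenvalue hypothesis there only enters by replacing $\widecheck{[W\psi]}$ with $z\widecheck{\psi}$ on the left-hand side, so that \eqref{eq:phiplus}--\eqref{eq:phiminus} really encode the operator-level identity $W^\aubrydual_{\cdot,\xi}\circ\mathcal{T}_\theta=\mathcal{T}_\theta\circ W_{\cdot,\theta}$; restricting to fibers in $\ell^1$ (your dense subspace) is the right way to make the pointwise-in-$\xi$ statement legitimate before integrating in $\theta$ and closing up. Your route avoids redoing the duality computation a second time at the cost of having to articulate carefully what the proof of Theorem~\ref{t:aubryViaSolutions} actually establishes; the paper's factorization argument is longer but makes the structural content (shift $\leftrightarrow$ coin exchange) more transparent and is self-contained at the operator level. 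Both are sound.
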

Let us note the following elementary consequence of Theorem~\ref{t:aubryOperator}.
\begin{coro}\label{cor:aubryspecra}
For any $\lambda_1, \lambda_2 \in [0,1]$ and $\Phi \in \bbT$ irrational,
\begin{equation}
\Sigma_{\lambda_1,\lambda_2,\Phi}
 = \Sigma_{\lambda_2,\lambda_1,\Phi}.
\end{equation}
\end{coro}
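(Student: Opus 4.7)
\medskip

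The plan is to extract the corollary directly from Theorem~\ref{t:aubryOperator}, using two standard facts: (i) the spectrum of a direct integral of a measurable field of operators is the essential union of the fiber spectra, and (ii) ergodicity (in fact minimality) of irrational rotation on $\bbT$ forces the fiber spectrum $\sigma(W_{\lambda_1,\lambda_2,\Phi,\theta})$ to be independent of $\theta$. Since $\theta \mapsto W_{\lambda_1,\lambda_2,\Phi,\theta}$ is norm-continuous and intertwined by translation with the rotation $\theta \mapsto \theta + \Phi$, strong operator continuity combined with the density of the orbit $\{n\Phi + \theta\}_{n \in \bbZ}$ shows in fact that $\sigma(W_{\lambda_1,\lambda_2,\Phi,\theta}) = \Sigma_{\lambda_1,\lambda_2,\Phi}$ for \emph{every} $\theta \in \bbT$. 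Consequently,
\[
\sigma\!\left(\int_\bbT^\oplus W_{\lambda_1,\lambda_2,\Phi,\theta}\, d\theta\right) = \Sigma_{\lambda_1,\lambda_2,\Phi}.
\]

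Next I would handle the right-hand side of Theorem~\ref{t:aubryOperator}. By definition $W_{\lambda_1,\lambda_2,\Phi,\theta}^\aubrydual = W_{\lambda_2,\lambda_1,\Phi,\theta}^\top$, and for any bounded operator $A$ on a separable Hilbert space with a fixed orthonormal basis, $\sigma(A^\top) = \sigma(A)$: indeed, if $C$ denotes coordinatewise complex conjugation in the standard basis of $\scrH_1$, then $A^\top = C A^* C$ with $C$ an antilinear involution, so $A - zI$ is invertible iff $A^\top - zI$ is invertible. Applying this fiberwise, together with the same minimality argument as above (now for the family $\{W_{\lambda_2,\lambda_1,\Phi,\theta}^\top\}_\theta$), yields
\[
\sigma\!\left(\int_\bbT^\oplus W_{\lambda_1,\lambda_2,\Phi,\theta}^\aubrydual\, d\theta\right) = \Sigma_{\lambda_2,\lambda_1,\Phi}.
\]

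Finally, Theorem~\ref{t:aubryOperator} says the two direct integrals are unitarily equivalent, hence have the same spectrum, and comparing the two displays gives $\Sigma_{\lambda_1,\lambda_2,\Phi} = \Sigma_{\lambda_2,\lambda_1,\Phi}$, as desired. The only step that deserves genuine care (and is not an immediate citation) is the constancy of the fiber spectrum in $\theta$; everything else is bookkeeping around Theorem~\ref{t:aubryOperator}. The real content of the corollary lies in that theorem itself, not in the deduction performed here.
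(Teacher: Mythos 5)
Your proof is correct and follows the same route as the paper, which compresses the entire argument into the one-line chain $\Sigma_{\lambda_1,\lambda_2,\Phi}=\sigma\bigl(\int_\bbT^\oplus W_{\lambda_1,\lambda_2,\Phi,\theta}\,d\theta\bigr)=\sigma\bigl(\int_\bbT^\oplus W_{\lambda_2,\lambda_1,\Phi,\theta}^\top\,d\theta\bigr)=\Sigma_{\lambda_2,\lambda_1,\Phi}$, citing only minimality and Theorem~\ref{t:aubryOperator}. The details you supply (constancy of the fiber spectrum via covariance plus norm-continuity, and $\sigma(A^\top)=\sigma(A)$) are exactly what the paper leaves implicit.
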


\begin{proof}
By minimality and Theorem~\ref{t:aubryOperator}, we have
\[ \Sigma_{\lambda_1,\lambda_2,\Phi}
=\sigma\left(\int_\bbT^\oplus W_{\lambda_1,\lambda_2,\Phi,\theta}\, d\theta\right)
= \sigma\left( \int_\bbT^\oplus W_{\lambda_2,\lambda_1,\Phi,\theta}^\top \, d\theta \right)
 = \Sigma_{\lambda_2,\lambda_1,\Phi}, \]
 as desired.
\end{proof}

The second crucial ingredient in our work is the classification of cocycle behavior in the language of Avila's global theory \cite{Avila2015Acta}. To formulate these results, let us recall the general setting. Given $\Phi$ irrational and a continuous map $M:\bbT \to \bbC^{2\times 2}$, we may consider the skew product 
\begin{equation}
(\Phi,M):\bbT \times \bbC^2 \to \bbT \times \bbC^2, \quad (\theta,v)\mapsto (\theta+\Phi,M(\theta)v)
\end{equation}
and its iterates given by $(\Phi,M)^n = (n\Phi,M^n)$, where
\begin{align}\nonumber
M^n(\theta) = M^{n,\Phi}(\theta) & = M((n-1)\Phi+\theta) \cdots M(\Phi+\theta) M(\theta) \\
\label{eq:cociterates}
& = \prod_{j=n-1}^0 M(j\Phi+\theta)
\end{align}
for $n \in \bbN$. The map $(\Phi,M)$ is called the \emph{quasiperiodic cocycle associated with $\Phi$ and $M$.} The \emph{Lyapunov exponent} of $(\Phi,M)$ is given by
\begin{equation} \label{eq:leDefinition}
 L(\Phi,M) = \lim_{n\to\infty} \frac{1}{n} \int_\bbT \log\|M^n(\theta)\| \, d\theta.
 \end{equation}
If $M$ is analytic and enjoys an analytic extension to a strip of the form $\{ \theta + i \varepsilon : \theta \in \bbT, \ |\varepsilon| < \delta \}$, then for  $|\varepsilon|< \delta$, we consider the complexified cocycle map
\begin{equation} \label{eq:complexifiedMdef} M(\cdot+i\varepsilon):\bbT \mapsto M(\theta+i\varepsilon).
\end{equation}
For each $|\varepsilon|<\delta$, one can consider the Lyapunov exponent of the complexified cocycle:
\begin{equation} \label{eq:complexifiedLEdef}
L(\Phi,M,\varepsilon)
:= L(\Phi,M(\cdot+i\varepsilon)).
\end{equation}
If, in addition, $M:\bbT \to \SL(2,\bbR)$, one says that $(\Phi,M)$ is 
\begin{itemize}
\item \textbf{uniformly hyperbolic} if $\|M^n(\theta)\| \geq ce^{\lambda|n|}$ for all $n \in \bbZ$ and constants $c,\lambda>0$
\item \textbf{supercritical} if $L(\Phi,M)>0$ and $(\Phi,M)$ is not uniformly hyerbolic
\item \textbf{subcritical} if $L(\Phi,M,\varepsilon)=0$ for all $\varepsilon$ in a strip containing $\varepsilon=0$
\item \textbf{critical} if $L(\Phi,M)=0$ but $(\Phi,M)$ is not subcritical. 
\end{itemize}

Let us now explain how quasiperiodic cocycles arise in the spectral analysis of the operators $W_{\lambda_1,\lambda_2,\Phi,\theta}$. To study the spectral problem associated with a walk $W = S_\lambda Q$, one naturally studies the generalized eigenvalue equation $W\psi = z \psi$ with $z \in \bbC$, which leads to the transfer matrices defined by 
\begin{equation}\label{eq:model:transmatDef}
	T_{z}(n)=\frac1{q^{22}_n}\begin{bmatrix}\lambda^{-1}z^{-1}\det Q_n+\lambda'\lambda^{-1}(q_n^{21}-q_n^{12})+z{\lambda'}^2\lambda^{-1}	&	q_n^{12}-\lambda'z	\\	-q_n^{21}-\lambda'z	&	\lambda z	\end{bmatrix}.
\end{equation}
We will show that 
\begin{equation*}
	\begin{bmatrix}\psi_{n+1}^+\\\psi_{n}^-\end{bmatrix}
	= T_{z}(n)\begin{bmatrix}\psi_{n}^+\\\psi_{n-1}^-\end{bmatrix}
\end{equation*}
whenever $z \in \bbC \setminus \{0\}$ and $\psi$ solves $W\psi = z \psi$ (see Section~\ref{sec:tms} for details).

In the primary quasiperiodic setting of this paper with coins $Q_n=Q_{\lambda_2,\Phi,\theta,n}$ given in \eqref{eq:moddefs:coinDef}, this leads naturally to a quasiperiodic cocycle.
To describe this more precisely, given $\lambda_1$, $\lambda_2$, $\Phi$, $\theta$, and $z$, define $A_z(\theta)
 = A_{\lambda_1,\lambda_2,z}(\theta)$ to be $T_z(0)$ as in \eqref{eq:model:transmatDef} with $Q_n$ as in \eqref{eq:moddefs:coinDef}, that is,
\begin{equation} \label{eq:AlambdaCocycleDef}
A_z(\theta)
= \frac{1}{\lambda_2 \costwopi(\theta) - i\lambda_2'}
\begin{bmatrix}\lambda_1^{-1}z^{-1}+2\lambda_1'\lambda_1^{-1}\lambda_2 \sintwopi(\theta) + z{\lambda_1'}^2\lambda_1^{-1}	&	-\lambda_2 \sintwopi(\theta)-\lambda_1'z	\\	- \lambda_2 \sintwopi(\theta) -\lambda_1'z	&	\lambda_1z	\end{bmatrix}, \end{equation}
where we adopt the abbreviations 
\begin{align} \label{eq:costwopiDef}
\costwopi(\theta) = \cos(2\pi\theta), \qquad
\sintwopi(\theta) = \sin(2\pi \theta).
\end{align}
This defines a cocycle map, whose iterates and Lyapunov exponent may be considered as in \eqref{eq:cociterates} and \eqref{eq:leDefinition}.

We define
\[ L(z) 
= L_{\lambda_1,\lambda_2,\Phi}(z) 
= L(\Phi,A_{\lambda_1,\lambda_2,z})
= \lim_{n\to\infty} \frac{1}{n} \int_{\bbT} \log\| A_{\lambda_1,\lambda_2,z}^{n,\Phi}(\theta) \| \, d\theta \]
which by irrationality of $\Phi$ and Kingman's subadditive ergodic theorem satisfies for each $z$ and a.e. $\theta$:
\[ L_{\lambda_1,\lambda_2,\Phi}(z) 
= \lim_{n\to\infty} \frac{1}{n}\log\|A_{\lambda_1,\lambda_2,z}^{n,\Phi}(\theta) \| . \]
Later, we will see $|\det(A_{\lambda_1,\lambda_2}(\theta,z))| \equiv 1$ (see \eqref{eq:AlambdaDet}), which naturally implies $L(z) \geq 0$ for all $z \in \bbC\setminus\{0\}$.

We will see later that the transfer matrix cocycle map associated with the Aubry dual walk $W^\aubrydual$ is given by
\begin{equation} \label{eq:models:Asharpdef} A^\aubrydual_{\lambda_1, \lambda_2, z}(\theta) = \overline{A_{\lambda_2, \lambda_1, 1/\bar z}(\theta)}.\end{equation}
In particular, for $z \in \partial \bbD$ one has $A^\aubrydual_{\lambda_1,\lambda_2,z} = \overline{A_{\lambda_2,\lambda_1, z}}$, so statements transfer into the dual setting in a straightforward fashion.

One must be careful in the present setting for two reasons. First, the transfer matrices for the UAMO are not in $\SL(2,\bbR)$: indeed they are not even unimodular; this can be solved with a suitable conjugacy that moves the normalized cocycle $A/\sqrt{\det A}$ into $\SL(2,\bbR)$ (we will give a more detailed description later). Second, in the case $\lambda_2 = 1$, the associated cocycle obviously is not analytic and hence the apparatus of global theory cannot be applied directly, even after shifting to the real cocycle. To deal with that, we work with a suitable regularization; in fact, because of singularities that arise off the real axis, we use this regularization even when $\lambda_2 \neq 1$. To that end, we consider $B = B_{\lambda_1,\lambda_2,z}$ given by
	\begin{align}
	\nonumber
B_{\lambda_1,\lambda_2,z}(\theta)
& = \left[\frac{2(\lambda_2 \costwopi(\theta)-i\lambda_2')}{1+\lambda_2'} \right] A_{\lambda_1,\lambda_2,z}(\theta) \\
& =  \label{eq:Blambdaearlydef}
\frac{2}{\lambda_1(1+\lambda_2')}
\begin{bmatrix} z^{-1}+2\lambda_1'\lambda_2 \sintwopi(\theta) + z{\lambda_1'}^2
&	-\lambda_1(\lambda_2 \sintwopi(\theta) + \lambda_1'z)	\\	
-\lambda_1(\lambda_2 \sintwopi(\theta)+\lambda_1'z	)
&	\lambda_1^2 z	\end{bmatrix}.
	\end{align}
The additional prefactor $2/(1+\lambda_2')$ is chosen so that the Lyapunov exponents of $A$ and $B$ are precisely the same (which we will demonstrate later). The regularized cocycle $B$ is analytic, even for $\lambda_2=1$ and one can classify cocycle dynamics throughout all parameter regions.

As mentioned before, the case $\lambda_2 = 1$ presents some complications with the general nomenclature: the cocycle $A$ is not analytic and the regularized cocycle $B$ has points of singularity at which $\det B = 0$, so neither can be pushed to $\SL(2,\bbR)$ in a simple way. Let us address this case first.

\begin{theorem} \label{t:cocycleLambda2=1}
Suppose $\Phi$ is irrational and $\lambda_2=1$.
\begin{enumerate}[label={\rm (\alph*)}]
\item \label{t:cocyclelambda2=1lambda1=1} If $\lambda_1=\lambda_2=1$, then $L(\Phi,B_{1,1,z},\varepsilon) = 2\pi|\varepsilon|$ for all $z \in \Sigma_{1,1,\Phi}$ and all $\varepsilon$.
\item \label{t:cocyclelambda2=1lambda1<1} If $0<\lambda_1 < \lambda_2=1$, then $L_{\lambda_1,1,\Phi}(z)>0$ for all $z \in \Sigma_{\lambda_1,1,\Phi}$. Indeed,
\begin{equation}
L_{\lambda_1,1,\Phi}(z) 
\geq \log \left[ \frac{1+\lambda_1'}{\lambda_1}\right],
\end{equation}
with equality if and only if $z$ belongs to the spectrum.
\end{enumerate}
\end{theorem}

One could consider calling the behavior in Theorem~\ref{t:cocycleLambda2=1}.\ref{t:cocyclelambda2=1lambda1=1} ``critical'' and the behavior in Theorem~\ref{t:cocycleLambda2=1}.\ref{t:cocyclelambda2=1lambda1<1} ``supercritical'', but it should be understood that this is a slight abuse of terminology since those words are generally understood in the literature to refer to $\SL(2,\bbR)$ cocycles.

Now let us consider the cases with $\lambda_2<1$. In order to apply the relevant results from the global theory, we must work with an $\SL(2,\bbR)$ cocycle. There are thus two impediments: $\det A_{\lambda_1,\lambda_2,z} \neq1$ and $A_{\lambda_1,\lambda_2,z}/\sqrt{\det A_{\lambda_1,\lambda_2,z}} \notin \SL(2,\bbR)$ in general.

Let $Y(\lambda)$ denote the unitary matrix
\begin{equation} \label{eq:selfdual:YlamDef} 
Y(\lambda) = \frac{1}{2} \begin{bmatrix} 
\sqrt{1+\lambda}- i \sqrt{1-\lambda} 
& - \sqrt{1-\lambda}+i\sqrt{1+\lambda} \\
\sqrt{1-\lambda} + i \sqrt{1+\lambda}
& \sqrt{1+\lambda} + i\sqrt{1-\lambda} \end{bmatrix}, \end{equation}
and put
\begin{equation} \label{eq:selfdual:realifiedAdef}
A_{\lambda_1,\lambda_2,z}^\realified(\theta)
:=  Y(\lambda_1)^* \frac{A_{\lambda_1,\lambda_2,z}(\theta)}{\sqrt{\det A_{\lambda_1,\lambda_2,z}(\theta)}} Y(\lambda_1).
\end{equation}

\begin{prop} \label{prop:conjtoSL2R}
For all $\lambda_1 \in (0,1]$, $\lambda_2 \in [0,1)$, and $z \in \partial \bbD$, $A^\realified_{\lambda_1,\lambda_2,z}
$ defined in \eqref{eq:selfdual:realifiedAdef} is an analytic map $\bbT \to \SL(2,\bbR)$ with analytic extension to a strip. Indeed,
\begin{equation} \label{eq:realifiedcocycleexplicit}
A_{\lambda_1,\lambda_2,z}^\realified(\theta)
= \frac{1}{\lambda_1 \sqrt{{\lambda_2'}^2 + \lambda_2^2\costwopi^2(\theta)}} 
\begin{bmatrix} 
\Re \, z + \lambda_1'\lambda_2 \sintwopi(\theta) 
& \lambda_1 \Im \, z - \lambda_1'\Re \, z - \lambda_2 \sintwopi(\theta)  \\
 -\lambda_1 \Im \, z - \lambda_1'\Re \, z - \lambda_2 \sintwopi(\theta) 
 & \Re \, z + \lambda_1'\lambda_2 \sintwopi(\theta) 
\end{bmatrix}
\end{equation}
for all $\theta \in \bbT$, where we recall $\sintwopi(\theta) = \sin(2\pi \theta)$ and $\costwopi(\theta) = \cos(2\pi \theta)$. 
\end{prop}

Having conjugated the cocycle into $\SL(2,\bbR)$, we can discuss the characterization of $(\Phi,A^\realified)$ in the language of global theory.

\begin{theorem} \label{t:cocycleREG}
Suppose $\Phi$ is irrational and $\lambda_2<1$.
\begin{enumerate}[label={\rm (\alph*)}]
\item  \label{t:cocycleSubcritical} 
If $0 \leq \lambda_2 < \lambda_1 \leq 1$, then the cocycle $(\Phi,A_{\lambda_1,\lambda_2,z}^\realified)$ is subcritical for all $z \in \Sigma_{\lambda_1,\lambda_2,\Phi}$. In particular, $L(z) = 0$ for all $z \in \Sigma_{\lambda_1,\lambda_2,\Phi}$.\smallskip
\item \label{t:cocycleCritical}
If $0 < \lambda_1 = \lambda_2 < 1$, then the cocycle $(\Phi,A_{\lambda_1,\lambda_2,z}^\realified)$ is critical for all $z \in \Sigma_{\lambda_1,\lambda_2,\Phi}$. In particular, $L(z) = 0$ for all $z \in \Sigma_{\lambda_1,\lambda_2,\Phi}$.
\smallskip
\item \label{t:cocycleSupercritical}
If $0 < \lambda_1 < \lambda_2 < 1$, then the cocycle $(\Phi,A_{\lambda_1,\lambda_2,z}^\realified)$ is supercritical for all $z \in \Sigma_{\lambda_1,\lambda_2,\Phi}$. Moreover, the Lyapunov exponent $L_{\lambda_1,\lambda_2,\Phi}$ satisfies
\begin{equation} 
L_{\lambda_1,\lambda_2,\Phi}(z) 
\geq \log \left[ \frac{\lambda_2(1+\lambda_1')}{\lambda_1(1+\lambda_2')}\right]
\end{equation}
with equality if and only if $z$ belongs to the spectrum.
\end{enumerate} 
\end{theorem}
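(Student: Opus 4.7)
The plan is to apply Avila's global theory of one-frequency analytic cocycles to the regularized cocycle $B_{\lambda_1,\lambda_2,z}$, and then transfer the resulting classification to $A_{\lambda_1,\lambda_2,z}$ via the fact that the scalar prefactor relating them has vanishing logarithmic average on $\bbT$. First I verify applicability: the entries of $B$ are trigonometric polynomials in $e^{2\pi i\theta}$, so $B$ is analytic, and a direct calculation yields
\[\det B(\theta) = \frac{4\bigl(1 - \lambda_2^2 \sintwopi(\theta)^2\bigr)}{(1+\lambda_2')^2},\]
which is positive on $\bbR$. Using the classical identity $\int_0^1 \log|a + b\cos(2\pi\theta)|\,d\theta = \log\bigl((|a| + \sqrt{a^2-b^2})/2\bigr)$ (valid for $|a|\geq|b|$) applied to the Fourier expansion of $\sintwopi(\theta)^2$, one checks $\int_\bbT \log\det B(\theta)\,d\theta = 0$, which places $B$ in Avila's framework with nonnegative Lyapunov exponent and integer-quantized acceleration.

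The central computation is the large-$|\epsilon|$ asymptotic of $L(\epsilon) := L(B_{\lambda_1,\lambda_2,z}(\cdot + i\epsilon))$. Since $\sintwopi(\theta+i\epsilon) \sim \tfrac{i}{2}e^{2\pi\epsilon}e^{-2\pi i\theta}$ as $\epsilon\to+\infty$, while the bounded $(2,2)$-entry $\lambda_1^2 z$ becomes subdominant, the leading order is
\[B(\theta+i\epsilon) \sim \frac{i\lambda_2\, e^{2\pi\epsilon}\, e^{-2\pi i\theta}}{\lambda_1(1+\lambda_2')} \begin{bmatrix} 2\lambda_1' & -\lambda_1 \\ -\lambda_1 & 0 \end{bmatrix}.\]
The limit matrix has trace $2\lambda_1'$ and determinant $-\lambda_1^2$, hence eigenvalues $\lambda_1'\pm 1$ and spectral radius $1+\lambda_1'$. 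Combining this with the per-iterate modulus $\lambda_2 e^{2\pi\epsilon}/(\lambda_1(1+\lambda_2'))$ of the scalar prefactor and using continuity of the Lyapunov exponent, one obtains $L(\epsilon) = 2\pi\epsilon + \log\lambda_0 + o(1)$ as $\epsilon\to+\infty$, where $\lambda_0 = \lambda_2(1+\lambda_1')/(\lambda_1(1+\lambda_2'))$. A symmetric computation yields $L(\epsilon) = -2\pi\epsilon + \log\lambda_0 + o(1)$ as $\epsilon\to-\infty$.

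Combining these asymptotics with convexity of $\epsilon\mapsto L(\epsilon)$ and integer-quantization of the acceleration yields the uniform-in-$z$ lower bound $L(\epsilon)\geq 2\pi|\epsilon| + \log\lambda_0$, and in particular $L(z)\geq\log\lambda_0$. To finish, I invoke the characterization that $z\in\Sigma_{\lambda_1,\lambda_2,\Phi}$ is equivalent to the cocycle being non-uniformly hyperbolic, equivalently, $L(0)=0$ or $L$ has a kink at $\epsilon=0$. When $\lambda_0 > 1$, one has $L(0)\geq\log\lambda_0 > 0$ everywhere (supercritical); on $\Sigma$ the alternative $L(0)=0$ is excluded, so $L$ must have a kink at $0$, and the only shape consistent with convexity, quantization, and the sharp asymptotics at $\pm\infty$ is the V-shape $L(\epsilon) = 2\pi|\epsilon| + \log\lambda_0$, giving $L(0) = \log\lambda_0$ exactly. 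When $\lambda_0 = 1$, the bound $L(\epsilon)\geq 2\pi|\epsilon|$ forces $L>0$ off $\epsilon=0$, precluding subcriticality, and non-UH on $\Sigma$ then forces $L(0)=0$ (critical). When $\lambda_0 < 1$, a case analysis on slope patterns in $\{-2\pi,0,2\pi\}$ (the only possibilities by quantization and the asymptotic slopes) combined with $L\geq 0$ rules out a kink at $0$ with $L(0)>0$, so $L(0)=0$ on $\Sigma$; the slope constraints then force $L\equiv 0$ on the interval $[-|\log\lambda_0|/(2\pi),\,|\log\lambda_0|/(2\pi)]$, i.e., subcritical. Finally, an analogous integral computation shows the prefactor $c(\theta) = 2(\lambda_2\costwopi(\theta)-i\lambda_2')/(1+\lambda_2')$ satisfies $\int_\bbT\log|c(\theta)|\,d\theta = 0$, so $L(A)=L(B)$; when $\lambda_2<1$, $c$ has no real zeros, $A$ is itself analytic, and the classification transfers directly.

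The main obstacle will be the subcritical regime $\lambda_0 < 1$: convexity and the sharp asymptotics almost immediately yield the supercritical statement, but in the subcritical case the bound $L\geq\log\lambda_0$ is vacuous near $\epsilon=0$, and ruling out a positive $L(0)$ on the spectrum requires the full strength of integer-valued acceleration together with the UH characterization of the resolvent set for quasi-periodic unitary cocycles.
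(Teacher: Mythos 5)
Your proposal is correct and follows essentially the same route as the paper: regularize to the analytic cocycle $B$, compute the $\varepsilon\to\pm\infty$ limit matrix $\frac{i\lambda_2 e^{\mp 2\pi i\theta}}{\lambda_1(1+\lambda_2')}\begin{bmatrix}2\lambda_1' & -\lambda_1\\ -\lambda_1 & 0\end{bmatrix}$ with spectral radius $1+\lambda_1'$, and combine the resulting exact large-$|\varepsilon|$ asymptotics with convexity, quantization of acceleration, and the dominated-splitting characterization of the resolvent set, finally transferring the classification back to $A$ via the vanishing logarithmic average of the prefactor (Lemma~\ref{lem:le:logIntegral}). The one step you assert rather than justify is \emph{integer} (as opposed to half-integer) quantization of the acceleration: positivity of $\det B$ on $\bbR$ is not quite the right criterion (it fails at $\lambda_2=1$, where $\det B=4\costwopi^2(\theta)$, and says nothing about complexified lines $\varepsilon\neq 0$); what is needed, and what the paper establishes via a winding-number argument for $\theta\mapsto\det A(\theta+i\varepsilon)$, is a holomorphic square root of the determinant — though for the slope analysis near $\varepsilon=0$ that your argument actually uses, the explicit square root coming from the trivial winding of $\det A$ on the real line suffices.
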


By Theorems~\ref{t:cocycleLambda2=1} and \ref{t:cocycleREG}, one has the following corollary which shows that the Lyapunov exponent can be explicitly computed on the spectrum.

\begin{coro} \label{coro:lyapExact}
For any $0<\lambda_1 \leq 1$, $0\leq \lambda_2 \leq 1$, $\Phi \in \bbT$, and any $z \in \Sigma_{\lambda_1,\lambda_2,\Phi}$, one has
\[L_{\lambda_1,\lambda_2,\Phi}(z) = \max\{0,\log\lambda_0\},\]
where $\lambda_0$ is given by \eqref{eq:model:lambda0Def}.
\end{coro}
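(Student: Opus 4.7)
The plan is to read the corollary off directly from Theorem~\ref{t:cocycleREG} by a three-way case analysis, after confirming that the trichotomy $\lambda_1 > \lambda_2$ / $\lambda_1 = \lambda_2$ / $\lambda_1 < \lambda_2$ in the theorem matches the trichotomy $\lambda_0 < 1$ / $\lambda_0 = 1$ / $\lambda_0 > 1$ in the statement.

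First I would observe that
\[
\lambda_0 = \frac{\lambda_2(1+\lambda_1')}{\lambda_1(1+\lambda_2')}
= \frac{g(\lambda_2)}{g(\lambda_1)}, \qquad g(\lambda) := \frac{\lambda}{1+\sqrt{1-\lambda^2}},
\]
with the convention $g(0)=0$. A one-line computation (or the identity $g(\lambda) = (1-\sqrt{1-\lambda^2})/\lambda$ for $\lambda>0$) shows that $g$ is strictly increasing on $[0,1]$, so
\[
\operatorname{sgn}(\lambda_0-1) = \operatorname{sgn}(\lambda_2-\lambda_1).
\]
In particular, $\lambda_0 \leq 1$ exactly in the subcritical/critical regime and $\lambda_0 > 1$ exactly in the supercritical regime, with the convention $\log\lambda_0 = -\infty$ when $\lambda_2 = 0$ (in which case $\max\{0,\log\lambda_0\}=0$).

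Next I would dispatch the three regimes using Theorem~\ref{t:cocycleREG}. In the subcritical case $\lambda_1 > \lambda_2$, part~\ref{t:cocycleSubcritical} gives $L_{\lambda_1,\lambda_2,\Phi}(z) = 0$ for $z \in \Sigma_{\lambda_1,\lambda_2,\Phi}$; since $\lambda_0 < 1$, this equals $\max\{0,\log\lambda_0\}$. In the critical case $\lambda_1 = \lambda_2$ (with $\lambda_1 > 0$), part~\ref{t:cocycleCritical} again gives $L(z)=0$, matching $\max\{0,\log\lambda_0\}=0$ since $\lambda_0 = 1$. In the supercritical case $\lambda_1 < \lambda_2$, part~\ref{t:cocycleSupercritical} yields
\[
L_{\lambda_1,\lambda_2,\Phi}(z) \geq \log\lambda_0,
\]
with equality precisely on $\Sigma_{\lambda_1,\lambda_2,\Phi}$; since $\lambda_0 > 1$, this is $\max\{0,\log\lambda_0\}$. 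Assembling the three cases completes the proof.

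There is no real obstacle here: the substance is all in Theorem~\ref{t:cocycleREG}, and the corollary is essentially a repackaging. The only mild subtlety is confirming the monotonicity of $g$ (so that the corollary's trichotomy in $\lambda_0$ is equivalent to the theorem's trichotomy in $\lambda_1,\lambda_2$) and handling the boundary value $\lambda_2 = 0$ by the natural convention.
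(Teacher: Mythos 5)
Your proof is correct and follows exactly the route the paper intends: the paper gives no separate argument for this corollary, simply asserting that it follows from Theorem~\ref{t:cocycleREG}, and your three-case reading (together with the monotonicity of $\lambda \mapsto \lambda/(1+\sqrt{1-\lambda^2})$ to match the trichotomies) is the intended repackaging.
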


\begin{remark} \label{rem:subcritical}
The careful analysis of cocycle dynamics pays dividends for the determination of the spectral type. For instance, we use criticality in an essential manner to prove that the spectrum is a zero-measure Cantor set in the case $\lambda_1=\lambda_2>0$. Similarly, the subcriticality statement implies that the transfer matrix cocycle is almost reducible on the spectrum by Avila's almost-reducibility conjecture announced in \cite{Avila2015Acta} and proved in \cite{AvilaARAC1, AvilaARAC2}. One can then apply reducibility theory as in \cite{LiDamZhou2021Preprint} in order to deduce purely absolutely continuous spectrum for all phases. To execute this scheme would take us outside the scope of the current work; we plan to address it in future investigations.
\end{remark}

\subsection{Connection to CMV Matrices}
\label{sec:CMV}

Let us make explicit a connection between the current setup and Cantero--Moral--Vel\'{a}zquez 
(CMV) matrices. This generalizes the CGMV connection, which is named after the foundational papers \cite{CGMV2010CPAM, CGMV2012QIP}. The CMV matrix is a unitary operator on $\ell^2(\mathbb N)$ that has many applications in in spectral theory and orthogonal polynomials on the unit circle (OPUC). This connection is analogous to the relationship between the Jacobi matrix and orthogonal polynomials on the real line (OPRL). A standard reference for the many connections between the CMV matrix and OPUC is \cite{Simon2005OPUC1,Simon2005OPUC2}. In situations in which the coefficients are generated by an invertible dynamical system (such as an irrational circle rotation as in this work), is is natural to work with \emph{extended CMV matrices}, which are the natural analogues acting in $\ell^2(\bbZ)$.

In the present work, we need some additional flexibility afforded by complexifying certain parameters, which gives rise to objects we call \emph{generalized} CMV matrices. However, we will explain how to relate these objects to standard CMV matrices. Let us first define generalized CMV matrices. Let $\overline{\bbD} = \{ z \in \bbC : |z| \le 1\} $. For each pair 
\[ (\alpha,\rho) \in \bbS^3 = \set{(z_1,z_2) \in \overline{\bbD}^2 : |z_1|^2 + |z_2|^2=1},
\]
put
\[ \Theta(\alpha,\rho) 
= \begin{bmatrix} \overline{\alpha} & \rho \\ \overline{\rho} & -\alpha \end{bmatrix}. \]
Given sequences of elements $(\alpha_n,\rho_n) \in \mathbb{S}^3$, put
\begin{align*}
	\mathcal{L} = \mathcal{L}(\{(\alpha_{2n},\rho_{2n})\}_{n\in\bbZ}) & = \bigoplus \Theta(\alpha_{2n},\rho_{2n}), \\
	\mathcal{M} = \mathcal{M}(\{(\alpha_{2n+1},\rho_{2n+1})\}_{n\in\bbZ}) & = \bigoplus \Theta(\alpha_{2n+1},\rho_{2n+1}) 
\end{align*}
where in both cases $\Theta(\alpha_j,\rho_j)$ acts on $\ell^2(\{j, j+1\})$. One then takes $\calE = \calE_{\alpha,\rho} := \calL\calM$, which one can check has the matrix representation
\begin{equation} \label{def:extcmv}
	\calE
	=
	\begin{bmatrix}
		\ddots & \ddots & \ddots & \ddots &&&&  \\
		& \overline{\alpha_0\rho_{-1}} & \boxed{-\overline{\alpha_0}\alpha_{-1}} & \overline{\alpha_1}\rho_0 & \rho_1\rho_0 &&&  \\
		& \overline{\rho_0\rho_{-1}} & -\overline{\rho_0}\alpha_{-1} & {-\overline{\alpha_1}\alpha_0} & -\rho_1 \alpha_0 &&&  \\
		&&  & \overline{\alpha_2\rho_1} & -\overline{\alpha_2}\alpha_1 & \overline{\alpha_3} \rho_2 & \rho_3\rho_2 & \\
		&& & \overline{\rho_2\rho_1} & -\overline{\rho_2}\alpha_1 & -\overline{\alpha_3}\alpha_2 & -\rho_3\alpha_2 &    \\
		&& && \ddots & \ddots & \ddots & \ddots &
	\end{bmatrix},
\end{equation}
where all unspecified matrix entries are zero and we have placed a box around the entry corresponding to $\langle \delta_0, \calE \delta_0 \rangle$. 
If additionally one has $\rho_n  \in [0,1]$ for all $n$, one necessarily has $\rho_n = (1-|\alpha_n|^2)^{1/2}$, and we simply call this a \emph{standard CMV matrix}. In this case, we refer to $\{\alpha_n\}_{n \in \bbZ}$ as the sequence of \emph{Verblunsky coefficients} of $\calE$. If  $\rho_n=0$ for some $n \in \bbZ$, the reader can check that $\calE$ preserves $\ell^2(\bbZ \cap (-\infty,n])$ and $\ell^2(\bbZ \cap [n+1,\infty))$, so the operator $\calE$ decomposes as a direct sum of two half-line CMV operators on $(-\infty,n]$ and $[n+1,\infty)$.

In \cite{CGMV2010CPAM}, Cantero, Gr\"unbaum, Moral, and Vel\'azquez observed that one may connect CMV matrices and quantum walks. Since our quantum walk setting is slightly more general than theirs, let us briefly describe this connection. Order the basis of $\scrH_1$ as follows:
\begin{equation} \label{eq:orderedBasis} \ldots, \delta_{-1}^-, \delta_0^+, \delta_0^-, \delta_1^+, \delta_1^-, \delta_2^+, \ldots \end{equation}
Computing directly, one observes
\begin{align*}
	W\delta_n^+ 
	= S_\lambda(q_n^{21}\delta_n^- + q_n^{11}\delta_n^+)
	& = \lambda q_n^{21}\delta_{n-1}^- - \lambda' q_n^{21} \delta_n^+  + \lambda' q_n^{11} \delta_n^-  + \lambda q_n^{11} \delta_{n+1}^+  \\
	W\delta_n^- 
	= S_\lambda(q_n^{22}\delta_n^- + q_n^{12}\delta_n^+)
	& = \lambda q_n^{22}\delta_{n-1}^- - \lambda' q_n^{22} \delta_n^+  + \lambda' q_n^{12} \delta_n^-  + \lambda q_n^{12} \delta_{n+1}^+.
\end{align*}
Thus, writing $W$ in the ordered basis \eqref{eq:orderedBasis}, we get
\begin{equation}
	W 
	=  \begin{bmatrix}
		\ddots & \ddots & \ddots \\
		&  \lambda  q_0^{21} &  \lambda  q_0^{22} \\
		&  -\lambda' q_0^{21} & -\lambda' q_0^{22} \\
		&  \lambda' q_0^{11} & \boxed{\lambda' q_0^{12}} &  \lambda  q_1^{21} &  \lambda  q_1^{22} \\
		&  \lambda  q_0^{11} &  \lambda  q_0^{12} & -\lambda' q_1^{21} &  -\lambda' q_1^{22} \\
		&&&  \lambda' q_1^{11} &  \lambda' q_1^{12} \\
		&&&  \lambda  q_1^{11} &  \lambda  q_1^{12} \\
		&&& \ddots & \ddots & \ddots
	\end{bmatrix},
\end{equation}
where we have boxed the $\delta_0^-$-$\delta_0^-$ entry of $W$.
Thus, if we additionally assume that $\det Q_n = 1$ for all $n$, we may write
\[Q_n = \begin{bmatrix} \overline{\rho_{2n-1}} & -\alpha_{2n-1} \\ \overline{\alpha_{2n-1}} & \rho_{2n-1} \end{bmatrix} \]
for some suitable $(\alpha_{2n-1},\rho_{2n-1}) \in \mathbb{S}^3$. With this, $W$ can be identified with a generalized CMV matrix with those odd coefficients and such that $(\alpha_{2n},\rho_{2n}) = (\lambda',\lambda)$ for all $n \in \bbZ$. In particular, the walk $W_{\lambda_1,\lambda_2,\Phi,\theta}$ is equivalent to the generalized CMV matrix $\calE_{\lambda_1,\lambda_2,\Phi,\theta}$ given by
\begin{equation} \label{calElambda12phithetadef}
	\begin{split}
		\alpha_{2n-1} = \lambda_2 \sin(2\pi(n\Phi+\theta)), & \quad \rho_{2n-1} =  \lambda_2 \cos(2\pi(n\Phi+\theta)) - i \lambda_2' \\
		\alpha_{2n} = \lambda_1',& \quad \rho_{2n} = \lambda_1.
	\end{split}
\end{equation}

In view of this connection, each of our main results has an application to a suitable generalized CMV matrix.  We have chosen to begin with quantum walks rather than CMV matrices since it makes more clear the motivation behind choosing our parameters in the manner that we did. 

Moreover, every generalized CMV matrix is equivalent to a standard CMV matrix in a simple manner. Since we quote some results from the theory of standard CMV matrices, let us spell this connection out in more detail. The following result in this formulation may be found in \cite{CFLOZ}; for the reader's convenience, we give the proof. We say that two operators $A$ and $B$ in $\ell^2(\bbZ)$ are \emph{gauge equivalent} if there is a diagonal unitary operator $D$ on $\ell^2(\bbZ)$ such that $D A D^{*} = B$.

\begin{prop}\label{prop:realify}
	Every generalized extended CMV matrix is gauge-equivalent to a standard CMV matrix. Indeed, for any sequence $(\alpha,\rho) \in (\bbS^3)^\bbZ$, there is a diagonal unitary operator $D$ so that $D \calE_{\alpha,\rho}D^{*} = \calE_{\alpha,|\rho|}$
\end{prop}

\begin{proof}
	For $z \in \bbC$, denote $\mathrm{ang}(z) = z/|z|$ if $z \neq 0$ and $\mathrm{ang}(0)=1$. Define $d_0 = 1$, $d_n = \mathrm{ang}(\rho_0\rho_1\cdots\rho_{n-1}$) for $n >1$, and $d_n = (\mathrm{ang}(\rho_n \rho_{n+1} \cdots \rho_{-1}))^{-1}$ for $n<-1$. A direct calculation shows that the diagonal unitary $[D \psi]_n = d_n \psi_n$ satisfies $D \calE_{\alpha,\rho}D^* = \calE_{\alpha,|\rho|}$.
\end{proof}

\begin{coro} \label{coro:UAMOtoCMV}
	The generalized CMV matrix $\calE_{\lambda_1,\lambda_2,\Phi,\theta}$ defined by \eqref{calElambda12phithetadef} is gauge-equivalent to the standard CMV matrix $\calE_{\lambda_1,\lambda_2,\Phi,\theta}^\realified$ defined by
	\begin{equation}
		\alpha_{2n} = \lambda_1', \quad \alpha_{2n-1} = \lambda_2 \sin(2\pi(n\Phi+\theta)), \quad \rho_n = \sqrt{1-|\alpha_n|^2}.
	\end{equation}
\end{coro}

As a generalized CMV matrix, there are two other cocycles associated with $W$, namely, the \emph{Szeg\H{o} cocycle} (cf.\ \cite[Equation~(1.5.35)]{Simon2005OPUC1}) and the \emph{Gesztesy--Zinchenko {\rm(GZ)} cocycle} \cite{GZ2006JAT}. Denoting
\[X(\alpha,\rho,z) = \frac{1}{\rho} \begin{bmatrix} z & - \overline{\alpha} \\ -\alpha z & 1\end{bmatrix}, 
\quad P(\alpha,\rho,z) =  \frac{1}{\rho}\begin{bmatrix} - \overline{\alpha} & z \\ 1/z & -{\alpha} \end{bmatrix},
\quad Q(\alpha,\rho,z) = \frac{1}{\rho}\begin{bmatrix} - \alpha & 1 \\ 1 & -\overline{\alpha} \end{bmatrix}  \] the (two-step) Szeg\H{o} cocycle associated with $W_{\lambda_1,\lambda_2,\Phi,\theta}$ is given by 
\begin{equation} \label{eq:szegococycledef}
	S_{\lambda_1,\lambda_2,z}(\theta) = X(\lambda_1',\lambda_1,z)X(\lambda_2 \sin(2\pi\theta), \lambda_2\cos(2\pi\theta)-i\lambda_2',z)
\end{equation}
and the two-step GZ cocycle is given by
\begin{equation} \label{eq:GZcocycledef}
	G_{\lambda_1,\lambda_2,z}(\theta) =  P(\lambda_1',\lambda_1,z)Q(\lambda_2 \sin(2\pi\theta), \lambda_2\cos(2\pi\theta)-i\lambda_2',z)
\end{equation}
An equivalence between $G$ and $S$ is given in \cite[Eq.~(3.3)]{DFO2016JMPA}, and an equivalence between $G$ and $A$ is given in \cite[Eq.~(3.20)]{YangFPreprint}. We have
\begin{equation} \label{eq:szegoAzConjugacy}
	z^{-1}S_{z,\lambda_1,\lambda_2} = C_{\lambda_1} A_{\lambda_1,\lambda_2,z}C_{\lambda_1}^{-1}, \quad C_{\lambda_1} = \begin{bmatrix}
		-\lambda_1' & \lambda_1 \\ 1 & 0
	\end{bmatrix},
\end{equation}
and
\begin{equation} \label{eq:szegoToGZ}
	D_z^{-1} G_{\lambda_1, \lambda_2,z}D_z  =z^{-1} S_{\lambda_1,\lambda_2,z}, \quad  D_z = \begin{bmatrix} z & 0 \\ 0 & 1 \end{bmatrix}. 
\end{equation}

\begin{coro} \label{coro:UAMOtoSzego}
	For all irrational $\Phi$, $\lambda_1 \in (0,1]$, $\lambda_2 \in [0,1]$, and $z \in \partial \bbD$, \begin{equation} 
		L(\Phi,A_{\lambda_1,\lambda_2,z}) 
		= L(\Phi,S_{\lambda_1,\lambda_2,z})
		= L(\Phi,G_{\lambda_1,\lambda_2,z}).
	\end{equation}
\end{coro}
\begin{proof}
	This is immediate from \eqref{eq:szegoAzConjugacy} and \eqref{eq:szegoToGZ}.
\end{proof}

Together, Corollaries~\ref{coro:UAMOtoCMV} and \ref{coro:UAMOtoSzego} allow one to apply much of the theory of standard CMV matrices to generalized CMV matrices and in particular to the UAMO.
\bigskip

The remainder of the paper is organized as follows. In Section~\ref{sec:origins}, we describe the motivation behind the study of the model in question by relating it to a two-dimensional magnetic quantum walk in a uniform magnetic field. Section~\ref{sec:cocycle} analyzes the transfer matrix cocycle and in particular proves Theorem~\ref{t:cocycleREG} as well as Theorems~\ref{t:maintype}.\ref{t:maintype:lambda1=0} and \ref{t:maintype}.\ref{t:maintype:lambda2=0}. Section~\ref{sec:aubry} works out suitable versions of Aubry duality for the model and contains the proofs of Theorems~\ref{t:aubryViaSolutions} and \ref{t:aubryOperator}. Section~\ref{sec:contspec} discusses continuous spectrum (that is, the exclusion of eigenvalues) and contains the proof of Theorem~\ref{t:maintype}.\ref{t:maintype:gordon} and the ``purely continuous'' half of Theorem~\ref{t:maintype}.\ref{t:maintype:critical}. Section~\ref{sec:zeromeas} discusses the phenomenon of zero-measure Cantor spectrum in the critical case and in particular proves Theorems \ref{t:maintype}.\ref{t:maintype:criticalCantor} and~\ref{t:maintype}.\ref{t:maintype:critical}. Section~\ref{sec:localization} discusses localization in the supercritical region and spectral consequences, in particular proving Theorems~\ref{t:maintype}.\ref{t:maintype:super} and \ref{t:maintype}.\ref{t:maintype:sub}.

%!TEX root = CFO.tex

\section{Motivation: Two-Dimensional Magnetic Quantum Walks} \label{sec:origins}

Let us describe the motivation behind studying quantum walks with quasiperiodic coins as in \eqref{eq:moddefs:coinDef}. 
The reader who is not interested in the physical origins of the model could skip this section, but it explains why we study the one-dimensional quasiperiodic walks that we choose to study and it also provides some insight into why we could expect such walks to exhibit a suitable version of Aubry duality. 
Indeed, since the almost-Mathieu operator arises from the study of a two-dimensional tight-binding model of an electron subjected to an external magnetic field, the natural starting point of our model is a two-dimensional quantum walk subjected to an external magnetic field. We also direct the reader to \cite{MandelZhito1991CMP,Shubin} for additional insights about the relationship between self-adjoint one-dimensional quasiperiodic and two-dimensional magnetic operators.

As already mentioned, one can simply start from the model defined in \eqref{eq:moddefs:walkDef}, \eqref{eq:moddefs:slambdaDef},  and \eqref{eq:moddefs:coinDef}, but it is helpful to see how this arises physically, which also explains the choice of the model.

Let us recall the two-dimensional magnetic QW model of \cite{CFGW2020LMP}. 
The state space will be $\scrH_2 : = \ell^2(\bbZ^2) \otimes \bbC^2$ with orthonormal basis
\[\{\delta_{\underline n}^s := \delta_{\underline n} \otimes e_s: \underline n \in \bbZ^2, \ s \in \{\pm\} = \bbZ_2 \}.\]
As before, view $\bbC^2 = \ell^2(\bbZ_2)$ and write $e_+ = [1,0]^\top$ and $e_- = [0,1]^\top$. Fix a magnetic flux $\Phi$. In the symmetric gauge, the magnetic translations are given by \cite{CGWW2019JMP}
\begin{align}
 T_1 & = T_{1,\Phi} : \delta_{\underline n}^s \mapsto e^{-si  \pi \Phi n_2}\delta_{\underline n + s e_1}^s, \\
 T_2 & = T_{2,\Phi} : \delta_{\underline n}^s \mapsto e^{si  \pi \Phi n_1}\delta_{\underline n + s e_2}^s 
 \end{align}
Viewing $\scrH_2 = \ell^2(\bbZ^2) \oplus \ell^2(\bbZ^2)$ in the natural manner, we have
 \[ T_{j,\Phi} = \begin{bmatrix}
 U_{j,\Phi} & 0 \\ 0 & U_{j,\Phi}^*
\end{bmatrix}  \]
where $U_j=U_{j,\Phi}$ are the corresponding shifts on $\ell^2(\bbZ)$. Again in the symmetric gauge, one can take 
\begin{align*}
U_{1,\Phi}:\delta_{\vecn} \mapsto e^{-i\pi \Phi n_{2}} \delta_{\vecn + e_1}, \quad
U_{2,\Phi}:\delta_{\vecn} \mapsto e^{ i\pi \Phi n_{1}} \delta_{\vecn + e_2}.
\end{align*} 
 We now also introduce the coupling parameter $\lambda \in [0,1]$ to get the coupled operators $T_{j,\lambda,\Phi}$ given by
\begin{align*}
T_{j,\lambda,\Phi} = \begin{bmatrix} U_{j,\Phi} & 0 \\ 0 & 1 \end{bmatrix} 
\begin{bmatrix} \lambda & -\lambda' \\ \lambda' & \lambda \end{bmatrix} \begin{bmatrix}1 & 0 \\ 0 & U_{j,\Phi}^* \end{bmatrix}
= \begin{bmatrix}
\lambda U_j & -\lambda' \\ \lambda' & \lambda U_j^*
\end{bmatrix}
\end{align*}

Let us briefly explain the terminology. We call $\lambda$ a ``coupling constant'' to elucidate an analogy with the self-adjoint setting. Namely, $\lambda$ is a parameter that dictates how strongly neighboring sites interact with one another by mediating the strength of the shift relative to the coins. Thus, as $\lambda$ decreases from one to zero, neighboring sites interact less and less and hence are less coupled together. Moreover, the main model of the present paper experiences a phase transition in the coupling constants that is similar to the phase transition exhibited in the self-adjoint Harper's model and that is unitary for each choice of parameters, which in turn requires genuine care in the definition of the model.

 Denoting by 
\[ C_0 = \frac{1}{\sqrt{2}} \begin{bmatrix}
1 & i \\ -i & -1
\end{bmatrix}, \]
we then want to consider
\begin{equation*}
	W^{(2)} = W^{(2)}_{\lambda_1,\lambda_2,\Phi} = T_{1,\lambda_1,\Phi} C_0 T_{2,\lambda_2,\Phi} C_0,
\end{equation*}
see Figure \ref{fig:neigh}. The coin $C_0$ is chosen to be a conjugate of the Hadamard coin, which is a popular choice for an unbiased coin in the quantum walk setting. The particular conjugate is chosen so that our quantum walk has a relatively simple form, yet other choices are possible (see the recent experimental work \cite{WeidemannNature}).

Let us now pass to a universal setting -- the rotation algebra $\mathcal{A} = \mathcal{A}_\Phi$, which is the $C^*$-algebra generated by elements $u,v$ satisfying the commutation relation 
\begin{equation}
uv =e^{-2 \pi i \Phi} vu.
\end{equation}
One obtains $W^{(2)}$ from the matrix algebra $\mathcal{A}_\Phi \otimes \bbC^{2\times 2}$ via the following representation. Define $p_2:\mathcal{A}_\Phi \to \mathscr{B}(\ell^2(\bbZ^2))$ by $p_2(u) = U_{1,\Phi}$ and $p_2(v) = U_{2,\Phi}$. Here, the ``2'' refers to the dimension of the representation. Presently, we will define the representation $p_1$ carrying $\calA_\Phi$ into $\mathscr{B}(\scrH_1)$. Noting that one indeed has
\[U_{1,\Phi}U_{2,\Phi} = e^{-2\pi i \Phi}U_{2,\Phi}U_{1,\Phi}\]
we see that this is well-defined. One can then extend to the matrix algebra $\calA_\Phi\otimes \bbC^{2 \times 2}$ in the standard way: $p_2(\{u_{ij}\}) = \{p_2(u_{ij})\}$. One then obtains $W$ from the algebra via $W = p_2(w)$, where 
\begin{equation}\label{eq:abstract_walk}
w = 
\begin{bmatrix} \lambda_1 u & -\lambda_1' \\ \lambda_1' & \lambda_1 u^* \end{bmatrix} C_0 
\begin{bmatrix}\lambda_2 v & -\lambda_2' \\ \lambda_2' &  \lambda_2 v^* \end{bmatrix} C_0
\end{equation}

\begin{figure}[t]
	\begin{center}
		\def\dif{0.15}	%distance between local basis states

\definecolor{kardinal}{RGB}{168,0,128}
\definecolor{colup}{RGB}{239,140,34}
\definecolor{coldown}{RGB}{83,109,255}
\def\colup{colup}
\def\coldown{coldown}

\tikzstyle{dashing}=[dash pattern=on 1.5pt off 2pt]

\begin{tikzpicture}[font=\footnotesize,scale=1,auto]
	\tikzset{arr/.style = {->,>=stealth,shorten >=8pt,shorten  <=2pt}}

% grid
	\draw[gray,thin,dotted] (0,0)+(-2.3,-2.3) grid +(2.3,2.3);

	\node[shape=circle,fill=red,inner sep=0pt,minimum size=5pt] (00) at (0,0) {};

%nodes	
	\foreach \x in {-1,1}
	\foreach \y in {-1,1}
	{
		\node[shape=circle,fill=\coldown,inner sep=0pt,minimum size=5pt] (\x\y) at (\x,\y) {};
	}

	\foreach \x in {0}
	\foreach \y in {-1,1}
	{
		\node[shape=circle,fill=\colup,inner sep=0pt,minimum size=5pt] (\x\y) at (\x,\y) {};
	}

	\foreach \x in {-1,1}
	\foreach \y in {0}
	{
		\node[shape=circle,fill=magenta,inner sep=0pt,minimum size=5pt] (\x\y) at (\x,\y) {};
	}
	
% arrows
	\foreach \x in {-1,0,1}
	\foreach \y in {-1,1}{
		\draw[arr] (\x\y) -- (00);
	}
	
	\foreach \x in {-1,1}{
		\draw[arr] (\x0) -- (00);
	}

	\draw[->, >=stealth, shorten >=1pt, shorten <=1pt] (00) to [out=40,in=75,loop,looseness=15] (00); %arc [start angle= -90, end angle= -50, x radius= 10pt, y radius= 5pt];

% legend
	\def\legendposx{3.7}
	\def\legendposy{0.2}
	\def\circlerad{1ex}
	\def\circleshift{-0.5ex}
	\newcommand\Circle[1]{circle (#1)}
	\def\colorcircle#1{\tikz[baseline=\circleshift]\draw[#1,fill=#1] (0,0) \Circle{\circlerad};}

	\node[anchor=west] at (\legendposx,\legendposy) {
		\begin{tabular}{c|l}
			&	\multicolumn{1}{c}{$|(W^{(2)}\psi)_{\underline n}|^2$}		\\[.1cm]\hline\\[-.25cm]
			\colorcircle{\coldown}	&	$\frac14|\lambda_1\lambda_2|^2$		\\[.15cm]
			\colorcircle{\colup}	&	$\frac12|\lambda_1\lambda_2'|^2$	\\[.15cm]
			\colorcircle{magenta}	&	$\frac12|\lambda_1'\lambda_2|^2$	\\[.15cm]
			\colorcircle{red}	&	$\hphantom{\frac12}|\lambda_1'\lambda_2'|^2$
			\end{tabular}
	};

\end{tikzpicture}
	\end{center}
	\caption{\label{fig:neigh}Neighborhood structure and transition amplitudes for the walk $W^{(2)}$. }
\end{figure}

On the other hand, we can map $\mathcal{A}_\Phi$ into $\scrB(\ell^2(\bbZ))$ via $p_1(u) = S$ and $p_1(v) = M_{\Phi,\theta}$, where $S$ denotes the shift $\delta_n \mapsto \delta_{n+1}$ and $M = M_{\Phi,\theta}$ denotes the multiplication operator
\[M \delta_n  = e^{2\pi i (n \Phi+\theta)} \delta_n.\]
Note that $SM_\Phi  = e^{-2\pi i \Phi} M_\Phi S$, so this indeed defines a representation $\mathcal{A}_\Phi \to \scrB(\ell^2(\bbZ))$ and hence extends to a representation of the matrix algebras $p_1 : \mathcal{A}_\Phi \otimes \bbC^{2\times 2} \to \scrB(\ell^2(\bbZ) \otimes \bbC^2)$.
Thus, applying $p_1$ to $w$ yields the operator
\[ W = S_{\lambda_1} Q_{\lambda_2,\Phi,\theta} \]
where $Q_{\lambda_2,\Phi,\theta}$ is a coin operator with local coins
\begin{align*}  Q_n = Q_{\lambda_2,\Phi,\theta,n}
&  = \frac{1}{2} \begin{bmatrix} 1 & i \\ -i & -1 \end{bmatrix}
\begin{bmatrix}\lambda_2 e^{2\pi i (n\Phi + \theta)} & -\lambda_2' \\ \lambda_2' &  \lambda_2 e^{-2\pi i (n\Phi + \theta)} \end{bmatrix} 
\begin{bmatrix} 1 & i \\ -i & -1 \end{bmatrix} \\
%& = 
%\frac{1}{2}
%\begin{bmatrix}
%\lambda_2 e^{2\pi i (n\Phi + \theta)} + i \lambda_2'  
%& -\lambda_2' +i \lambda_2 e^{-2\pi i (n\Phi + \theta)} \\
%-\lambda_2' - i \lambda_2 e^{2\pi i (n\Phi + \theta)} 
%&  i \lambda_2' - \lambda_2 e^{-2\pi i (n\Phi + \theta)} \end{bmatrix} 
%\begin{bmatrix} 1 & i \\ -i & -1 \end{bmatrix} \\
& = \begin{bmatrix}
\lambda_2 \cos(2\pi(n\Phi+\theta)) + i\lambda_2'
& -\lambda_2 \sin(2\pi(n\Phi+\theta))\\ 
 \lambda_2 \sin(2\pi(n\Phi+\theta)) 
 & \lambda_2 \cos(2\pi(n\Phi+\theta)) - i\lambda_2'
\end{bmatrix}.
\end{align*}

%!TEX root = CFO.tex

\section{Classification of Cocycle Behavior} \label{sec:cocycle}

In the following sections we aim at characterizations of the spectrum and spectral properties of $W_{\lambda_1,\lambda_2,\Phi,\theta}$.
This requires knowledge about the cocycle of its transfer matrices. 
We begin by describing the transfer matrix formalism for $W\psi = z\psi$ and the dual equation $W^\top \psi = z\psi$.
We then prove a lower bound on the Lyapunov exponent in the supercritical region via a Herman-type estimate. Afterwards, we completely classify cocycle behavior according to the relationship between the coupling constants.

\subsection{Transfer matrices} \label{sec:tms}

We begin by writing $W$ in coordinates.
\begin{lemma}\label{lem:w}
Suppose $\lambda \in (0,1]$ and $W = S_\lambda Q$ is a split-step walk with coupling constant $\lambda$ and coins $\{Q_n\}_{n\in\bbZ}$ as in \eqref{eq:moddefs:walkDef}.
For each $n \in \bbZ$, we have
	\begin{align} \label{eq:lemW+}
		[W\psi]_n^+ & = \lambda \left(q_{n-1}^{11}\psi_{n-1}^+ + q_{n-1}^{12}\psi_{n-1}^-\right)
		-\lambda' \left(q_{n}^{21}\psi_{n}^+ + q_{n}^{22}\psi_{n}^-\right),\\
		\label{eq:lemW-}
		[W\psi]_n^- & = \lambda \left(q_{n+1}^{21}\psi_{n+1}^+ + q_{n+1}^{22}\psi_{n+1}^-\right)
		+ \lambda' \left(q_{n}^{11}\psi_{n}^+ + q_{n}^{12}\psi_{n}^-\right).
	\end{align}
\end{lemma}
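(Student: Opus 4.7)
The plan is to prove Lemma~\ref{lem:w} by direct computation, simply unpacking the definition $W=S_\lambda Q$ in the standard basis. This is routine linear algebra, so the main task is just to keep the bookkeeping straight between the coin action and the conditional shift.

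First I would apply $Q$ to $\psi$ coordinatewise. Since $[Q\psi]_n = Q_n\psi_n$, the two components come out to
\begin{equation*}
[Q\psi]_n^+ = q_n^{11}\psi_n^+ + q_n^{12}\psi_n^-,
\qquad
[Q\psi]_n^- = q_n^{21}\psi_n^+ + q_n^{22}\psi_n^-.
\end{equation*}
Next I would compute the action of $S_\lambda$ on an arbitrary vector $\varphi\in\scrH_1$. Using the defining relations \eqref{eq:moddefs:slambdaDef}, namely $S_\lambda\delta_n^+ = \lambda\delta_{n+1}^+ + \lambda'\delta_n^-$ and $S_\lambda\delta_n^- = \lambda\delta_{n-1}^- - \lambda'\delta_n^+$, I collect the contributions to the coefficients of $\delta_n^+$ and $\delta_n^-$ in $S_\lambda\varphi$. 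The $+$ coefficient picks up $\lambda\varphi_{n-1}^+$ from the shifted $\delta_{n-1}^+$ term and $-\lambda'\varphi_n^-$ from the diagonal $\delta_n^-$ term; the $-$ coefficient picks up $\lambda\varphi_{n+1}^-$ from the shifted $\delta_{n+1}^-$ term and $\lambda'\varphi_n^+$ from the diagonal $\delta_n^+$ term. That is,
\begin{equation*}
[S_\lambda\varphi]_n^+ = \lambda\varphi_{n-1}^+ - \lambda'\varphi_n^-,
\qquad
[S_\lambda\varphi]_n^- = \lambda\varphi_{n+1}^- + \lambda'\varphi_n^+.
\end{equation*}

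Finally I would substitute $\varphi = Q\psi$ into these two formulas. The expression for $[W\psi]_n^+$ uses $[Q\psi]_{n-1}^+$ and $[Q\psi]_n^-$, which yields \eqref{eq:lemW+} after combining, and similarly $[W\psi]_n^-$ uses $[Q\psi]_{n+1}^-$ and $[Q\psi]_n^+$, giving \eqref{eq:lemW-}. There is no real obstacle here; the only thing to be careful about is tracking the sign in the diagonal piece of $S_\lambda$ and the index shifts $n\pm 1$, since these are exactly what distinguish the $+$ and $-$ components of the output from one another.
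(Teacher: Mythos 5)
Your proposal is correct and follows essentially the same route as the paper: apply $Q$ coordinatewise, read off $[S_\lambda\varphi]_n^\pm$ from the defining relations \eqref{eq:moddefs:slambdaDef} by collecting basis coefficients, and substitute $\varphi=Q\psi$. The sign and index bookkeeping in your intermediate formulas for $[S_\lambda\varphi]_n^\pm$ checks out.
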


\begin{proof}
	Writing out the coordinates, we have
	\begin{align*}
		[W\psi]_n^+ & = [SQ\psi]_n^+ \\ 
		& =\lambda [Q\psi]_{n-1}^+ -\lambda' [Q\psi]_n^- \\
		& = \lambda \left(q_{n-1}^{11}\psi_{n-1}^+ + q_{n-1}^{12}\psi_{n-1}^-\right)
		-\lambda' \left(q_{n}^{21}\psi_{n}^+ + q_{n}^{22}\psi_{n}^-\right),
	\end{align*}
proving \eqref{eq:lemW+}. The proof of \eqref{eq:lemW-} is similar.
\end{proof}

Analogously, we can write $W^\top$ in coordinates.
\begin{lemma}\label{lem:wT}
Suppose $\lambda \in (0,1]$ and $W = S_\lambda Q$ is a split-step walk with coupling constant $\lambda$ and coins $\{Q_n\}_{n\in\bbZ}$ as in \eqref{eq:moddefs:walkDef}. For each $n \in \bbZ$, we have
	\begin{align}\label{eq:lemWT+}
		[W^\top\psi]_n^+ & = q_n^{11}(\lambda\psi_{n+1}^+ +\lambda'\psi_n^-)+q_n^{21}(-\lambda'\psi_n^++\lambda\psi_{n-1}^-)\\
		\label{eq:lemWT-}
		[W^\top\psi]_n^- & = q_n^{12}(\lambda\psi_{n+1}^++\lambda'\psi_n^-)+q_n^{22}(-\lambda'\psi_n^++\lambda\psi_{n-1}^-).
	\end{align}
\end{lemma}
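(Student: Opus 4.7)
The plan is to factor $W^\top = (S_\lambda Q)^\top = Q^\top S_\lambda^\top$ and compute each factor coordinatewise, in complete analogy with Lemma~\ref{lem:w}. Because $Q$ is block-diagonal, acting by the $2\times 2$ matrices $Q_n$ fiberwise, its transpose $Q^\top$ also acts fiberwise by $Q_n^\top$, so the only genuine bookkeeping is the coordinate action of $S_\lambda^\top$.

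To obtain $S_\lambda^\top$, I would read off the four nonzero matrix entries of $S_\lambda$ directly from \eqref{eq:moddefs:slambdaDef}: the two entries with coefficient $\lambda$ arising from the shift contributions $\delta_n^\pm \mapsto \lambda\delta_{n\pm 1}^\pm$, and the two with coefficient $\pm\lambda'$ arising from the fiber-mixing term $\delta_n^\pm \mapsto \pm\lambda'\delta_n^\mp$. Swapping row and column indices of these entries (no conjugation, since we transpose rather than take adjoints) and recollecting yields
\[ [S_\lambda^\top\psi]_n^+ = \lambda\psi_{n+1}^+ + \lambda'\psi_n^-, \qquad [S_\lambda^\top\psi]_n^- = -\lambda'\psi_n^+ + \lambda\psi_{n-1}^-. \]
Substituting these two expressions into $[W^\top\psi]_n = Q_n^\top [S_\lambda^\top\psi]_n$ and reading off the two rows of $Q_n^\top$, whose entries are $(q_n^{11}, q_n^{21})$ and $(q_n^{12}, q_n^{22})$ respectively, then produces \eqref{eq:lemWT+} and \eqref{eq:lemWT-} directly.

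There is no substantive obstacle here; the lemma is a routine exercise dual to Lemma~\ref{lem:w}, and the whole argument fits in a few lines. The only point requiring a bit of care is to preserve the sign asymmetry between $+\lambda'$ and $-\lambda'$ baked into the definition of $S_\lambda$, since after the transpose the two $\lambda'$-coefficients end up attached to different $\pm$-components than they started, and a sign slip at this step would produce the wrong formula. Beyond that the calculation is purely mechanical.
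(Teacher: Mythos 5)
Your computation is correct: $W^\top = Q^\top S_\lambda^\top$, the coordinate formulas you derive for $S_\lambda^\top$ (namely $[S_\lambda^\top\psi]_n^+ = \lambda\psi_{n+1}^+ + \lambda'\psi_n^-$ and $[S_\lambda^\top\psi]_n^- = -\lambda'\psi_n^+ + \lambda\psi_{n-1}^-$) follow correctly from transposing the entries of \eqref{eq:moddefs:slambdaDef}, and applying $Q_n^\top$ fiberwise yields \eqref{eq:lemWT+}--\eqref{eq:lemWT-}. This is exactly the route the paper intends, as its proof consists only of the remark that the argument is almost identical to that of Lemma~\ref{lem:w}.
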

\begin{proof}
This is almost identical to the proof of Lemma~\ref{lem:w}.
\end{proof}

\begin{prop} \label{prop:cocycle:tmMain}
Suppose $\lambda \in (0,1]$ and $W = S_\lambda Q$ is a split-step walk with coupling constant $\lambda$ and coins $\{Q_n\}_{n\in\bbZ}$ as in \eqref{eq:moddefs:walkDef}.
\begin{enumerate}[label={\rm (\alph*)}]
\item If $z \in \bbC \setminus \{0\}$, $W\psi = z\psi$, and $Q_n$ is not an off-diagonal matrix, then
\begin{equation} \label{eq:tm:main1}
	\begin{bmatrix}\psi_{n+1}^+\\\psi_{n}^-\end{bmatrix}
	= T_z(n)\begin{bmatrix}\psi_{n}^+\\\psi_{n-1}^-\end{bmatrix},
\end{equation}
where $T_z(n)$ is given by \eqref{eq:model:transmatDef}.
\smallskip

\item If $z \in \bbC \setminus \{0\}$, $W^\top \psi = z\psi$, and $Q_n$ is not an off-diagonal matrix, then
\begin{equation} \label{eq:tm:main1dual}
	\begin{bmatrix}\psi_{n+1}^+\\\psi_{n}^-\end{bmatrix}
	= \overline{T_{1/\bar{z}}(n)}
	\begin{bmatrix}\psi_{n}^+\\\psi_{n-1}^-\end{bmatrix}.
\end{equation}
\smallskip

\item If $z \in \bbC\setminus\{0\}$ and $Q_n$ is not off-diagonal,
\begin{equation}\label{eq:tm:main2}
\det T_z(n) = \frac{q_n^{11}}{q_n^{22}},
\end{equation}
which is unimodular.
\end{enumerate}
\end{prop}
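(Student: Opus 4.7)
The plan is to prove all three parts by direct algebraic manipulation, leveraging the coordinate formulas from Lemmas~\ref{lem:w} and~\ref{lem:wT}.

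For part (a), I would rewrite the eigenvalue equation $W\psi = z\psi$ as $Q\psi = z S_\lambda^{-1}\psi$, or equivalently apply the formulas \eqref{eq:lemW+}–\eqref{eq:lemW-} at index $n+1$ and $n$ to obtain the two equations
\begin{align*}
q_n^{11}\psi_n^+ + q_n^{12}\psi_n^- &= z\bigl(\lambda \psi_{n+1}^+ + \lambda'\psi_n^-\bigr), \\
q_n^{21}\psi_n^+ + q_n^{22}\psi_n^- &= z\bigl(-\lambda'\psi_n^+ + \lambda\psi_{n-1}^-\bigr).
\end{align*}
Because $Q_n$ is not off-diagonal, one has $q_n^{22}\neq 0$, so I can solve the second equation for $\psi_n^-$ in terms of $\psi_n^+$ and $\psi_{n-1}^-$; this immediately produces the bottom row $\frac{1}{q_n^{22}}\bigl[-q_n^{21}-\lambda'z,\ \lambda z\bigr]$ of $T_z(n)$. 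Substituting this expression for $\psi_n^-$ into the first equation and using $z\ne 0$ to divide through by $\lambda z$ yields $\psi_{n+1}^+$; the coefficient of $\psi_n^+$ simplifies to $\lambda^{-1}z^{-1}\det Q_n + \lambda'\lambda^{-1}(q_n^{21}-q_n^{12}) + z{\lambda'}^2\lambda^{-1}$ precisely because the cross term collects as $q_n^{11}q_n^{22} - q_n^{12}q_n^{21} = \det Q_n$, giving the top row of $T_z(n)$.

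For part (b), the cleanest route is to observe that since $W$ is unitary, $W^\top = \overline{W^{-1}}$, so the equation $W^\top \psi = z\psi$ is equivalent to $W\bar\psi = (1/\bar z)\bar\psi$. Applying part (a) to $\bar\psi$ with spectral parameter $1/\bar z$ and then taking complex conjugates in the resulting transfer relation gives \eqref{eq:tm:main1dual}. (As a sanity check, one could alternatively start from Lemma~\ref{lem:wT}, write the pair of coordinate equations, and repeat the elimination procedure from (a); the two approaches must agree.)

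For part (c), I would compute $\det T_z(n) = (q_n^{22})^{-2}(ad-bc)$ directly from the entries of $T_z(n)$. The product $ad = \lambda z \cdot a$ and the product $bc = (q_n^{12}-\lambda'z)(-q_n^{21}-\lambda'z)$ both contain identical terms $z\lambda'(q_n^{21}-q_n^{12}) + z^2{\lambda'}^2$, which cancel in the difference, leaving $ad-bc = \det Q_n + q_n^{12}q_n^{21} = q_n^{11}q_n^{22}$. Hence $\det T_z(n) = q_n^{11}/q_n^{22}$. Unimodularity then follows from unitarity of $Q_n$: orthonormality of the rows gives $|q_n^{11}|^2+|q_n^{12}|^2=1$, orthonormality of the columns gives $|q_n^{11}|^2+|q_n^{21}|^2=1$, so $|q_n^{12}|=|q_n^{21}|$, and combining with $|q_n^{21}|^2+|q_n^{22}|^2=1$ yields $|q_n^{11}|=|q_n^{22}|$.

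There is no real obstacle here; the proof is pure bookkeeping, and the only subtle points are the hypotheses that prevent degeneracy ($q_n^{22}\ne 0$ for the elimination, $z\ne 0$ for dividing through). The mildest care is required in part (b) to track the complex conjugation correctly when passing between $W$ and $W^\top$.
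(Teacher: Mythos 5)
Your proposal is correct and follows essentially the same path as the paper: the same two scalar equations (which the paper extracts by taking the combinations $\lambda\cdot(\ref{eq:wpsi+})+\lambda'\cdot(\ref{eq:lemW-})$ and $\lambda'\cdot(\ref{eq:lemW+})-\lambda\cdot(\ref{eq:wpsi-})$, and which your rewriting $Q\psi=zS_\lambda^{-1}\psi$ produces directly), the same elimination of $\psi_n^-$ using $q_n^{22}\neq 0$, and the same determinant computation with unimodularity from $|q_n^{11}|=|q_n^{22}|$. Your part (b) conjugates the eigenvector ($W\bar\psi=(1/\bar z)\bar\psi$) where the paper conjugates the coins ($z^{-1}\psi=S_\lambda\overline{Q}\psi$), but these are equivalent bookkeeping and both land on $\overline{T_{1/\bar z}(n)}$.
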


\begin{proof}
\noindent (a) Beginning with Lemma~\ref{lem:w}, plug \eqref{eq:lemW+} into the eigenvector equation $W\psi=z\psi$ and shift indices $n\mapsto n+1$ to obtain
\begin{equation}\label{eq:wpsi+}
	z\psi_{n+1}^+ = \lambda \left(q_{n}^{11}\psi_{n}^+ + q_{n}^{12}\psi_{n}^-\right)
	-\lambda' \left(q_{n+1}^{21}\psi_{n+1}^+ + q_{n+1}^{22}\psi_{n+1}^-\right).
\end{equation}
Similarly, we obtain from \eqref{eq:lemW-}, $W\psi = z\psi$, and $n\mapsto n-1$
\begin{equation}\label{eq:wpsi-}
	z\psi_{n-1}^- = \lambda \left(q_{n}^{21}\psi_{n}^+ + q_{n}^{22}\psi_{n}^-\right) + \lambda'\left(q_{n-1}^{11}\psi_{n-1}^+ + q_{n-1}^{12}\psi_{n-1}^-\right).
\end{equation}
Then, $\eqref{eq:lemW-}\cdot\lambda'+\eqref{eq:wpsi+}\cdot\lambda$ and $\eqref{eq:lemW+}\cdot\lambda' - \eqref{eq:wpsi-}\cdot \lambda$ give 
\begin{equation} \label{eq:tm:eigeqDerived1}
\lambda' z\psi_n^- + \lambda z\psi_{n+1}^+ 
=\left(\lambda^2 +{\lambda'}^2\right)\left( q_{n}^{11}\psi_{n}^+ + q_{n}^{12}\psi_{n}^-\right)
= q_{n}^{11}\psi_{n}^+ + q_{n}^{12}\psi_{n}^-
\end{equation}
and
\begin{equation} \label{eq:tm:eigeqDerived2}
\lambda' z \psi_n^+ - \lambda z \psi_{n-1}^-
= -\left(\lambda^2 +{\lambda'}^2\right)\left(q_{n}^{21}\psi_{n}^+ + q_{n}^{22}\psi_{n}^-\right)
=- q_{n}^{21}\psi_{n}^+ - q_{n}^{22}\psi_{n}^-,
\end{equation}
respectively. 
Solving \eqref{eq:tm:eigeqDerived2} for $\psi_n^-$
yields
\begin{equation} \label{eq:tm:eigeqDerived3} \psi_n^- = \frac{1}{q_n^{22}}\left(\lambda z \psi_{n-1}^- -(q_n^{21} + \lambda' z) \psi_n^+\right),
\end{equation}
which is the bottom row of \eqref{eq:tm:main1}. 
Note that this step uses the assumption that $Q_n$ is not off-diagonal, that is, $|q_n^{11}| = |q_n^{22}| \neq 0$.
Solving \eqref{eq:tm:eigeqDerived1} for $\psi_{n+1}^+$ and inserting \eqref{eq:tm:eigeqDerived3} produces
\begin{align*}
\psi_{n+1}^+ 
& = \frac{1}{\lambda z} \left( q_n^{11} \psi_n^+ +(q_n^{12}-\lambda' z)\psi_n^- \right) \\
& = \frac{1}{\lambda z} \left( q_n^{11} \psi_n^+ +(q_n^{12}-\lambda' z) \left( \frac{1}{q_n^{22}}\left(\lambda z \psi_{n-1}^- -(q_n^{21} + \lambda' z) \psi_n^+\right) \right) \right) \\
& = \frac{1}{\lambda z q_n^{22}} \left(( q_n^{11}q_n^{22} - (q_n^{12}-\lambda' z)(q_n^{21} + \lambda' z) ) \psi_n^+ +(q_n^{12}-\lambda' z)\lambda z \psi_{n-1}^- \right) \\
& = \frac{1}{q_n^{22}} \left(\lambda^{-1}z^{-1} \det Q_n +\lambda'\lambda^{-1}(q_n^{21}-q_n^{12}) + z{\lambda'}^2\lambda^{-1} ) \psi_n^+ +(q_n^{12}-\lambda' z)  \psi_{n-1}^- \right),
\end{align*}
concluding the proof of \eqref{eq:tm:main1}. 
\smallskip

\noindent (b) Since $Q$ is unitary and $S_\lambda$ is real-symmetric, $Q^\top = \overline{Q}^{*}=\overline{Q}^{-1}$ and $S_\lambda^\top = S_\lambda^* = S_\lambda^{-1}$.
Thus, \eqref{eq:tm:main1dual} follows from \eqref{eq:tm:main1} and noting
\begin{align*}
W^\top \psi = z \psi
\iff Q^\top S_\lambda^\top \psi = z\psi
\iff z^{-1} \psi = S_\lambda \overline{Q} \psi.
\end{align*}
\smallskip

\noindent (c) For the determinant, we have
\begin{align*}
\det T_z(n)
& = \frac1{[q^{22}_n]^2} \det \begin{bmatrix}\lambda^{-1}z^{-1}\det Q_n+\lambda'\lambda^{-1}(q_n^{21}-q_n^{12})+z{\lambda'}^2\lambda^{-1}	&	q_n^{12}-\lambda'z	\\	-q_n^{21}-\lambda'z	&	\lambda z	\end{bmatrix} \\
& =\frac1{[q^{22}_n]^2} \left( \left(\det Q_n +\lambda'z(q_n^{21}-q_n^{12})+z^2{\lambda'}^2 \right) -	 (q_n^{12}-\lambda'z)(-q_n^{21}-\lambda'z) \right) \\
& =\frac1{[q^{22}_n]^2} (\det Q_n + q_n^{12}q_n^{21}) \\
& = \frac{q_n^{11}}{q_n^{22}},
\end{align*}
proving \eqref{eq:tm:main2}. Unimodularity of $\det T_z(n)$ then follows from unitarity of $Q_n$.
\end{proof}

As a consequence of this formalism, let us note the following result for the primary model of the manuscript.

\begin{prop} \label{prop:cocycle:pastur}
If $0< \lambda_1 \leq 1$, $0 \leq \lambda_2 \leq 1$, and $\Phi$ is irrational, then: 
\begin{enumerate}[label={\rm (\alph*)}]
\item \label{prop:cocycle:pastur:eig} For each $z \in \partial \bbD$,
\[\set{\theta \in \bbT : z \text{ is an eigenvalue of } W_{\lambda_1, \lambda_2, \Phi, \theta}}\]
has zero Lebesgue measure.
\smallskip

\item \label{prop:cocycle:pastur:iso} $\Sigma_{\lambda_1, \lambda_2, \Phi}$ has no isolated points.
\end{enumerate}
\end{prop}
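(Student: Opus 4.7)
The plan is to prove (a) by a standard ergodicity/Birkhoff argument exploiting the covariance $Q_n(\theta+\Phi) = Q_{n+1}(\theta)$ evident from \eqref{eq:moddefs:coinDef}, and then deduce (b) from (a) by minimality of the irrational rotation. First, I would record the covariance
\[ T W_{\lambda_1,\lambda_2,\Phi,\theta+\Phi} T^{-1} = W_{\lambda_1,\lambda_2,\Phi,\theta}, \]
where $T:\delta_n^\pm \mapsto \delta_{n-1}^\pm$, which follows from the identity just noted for the coins together with translation invariance of $S_{\lambda_1}$. Consequently, the set $E_z := \{\theta \in \bbT : z \in \sigma_{\rm pp}(W_{\lambda_1,\lambda_2,\Phi,\theta})\}$ is invariant under the rotation $\theta \mapsto \theta + \Phi$, and ergodicity gives $|E_z| \in \{0,1\}$.

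To rule out $|E_z|=1$, I would use the transfer matrix formalism. By Proposition~\ref{prop:cocycle:tmMain}, any solution of $W_\theta \psi = z\psi$ is determined by the two-dimensional initial data $(\psi_n^+,\psi_{n-1}^-)$, so $\dim\ker(W_\theta - z) \leq 2$ for every $\theta$ such that some $Q_n$ is not off-diagonal (which happens for a.e.\ $\theta$, since the off-diagonal locus is discrete when $\lambda_2=1$ and empty when $\lambda_2<1$). Let $P_\theta$ denote the spectral projection onto $\ker(W_\theta - z)$; by norm-continuity of $\theta\mapsto W_\theta$ and the Borel functional calculus, $P_\theta$ is Borel measurable in $\theta$. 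Setting $g(\theta) := \|P_\theta \delta_0^+\|^2 + \|P_\theta \delta_0^-\|^2 \in L^1(\bbT)$, the covariance gives $\|P_\theta\delta_n^\pm\|^2 = \|P_{\theta - n\Phi}\delta_0^\pm\|^2$, whence
\[ \mathrm{tr}(P_\theta) = \sum_{n\in\bbZ} g(\theta - n\Phi). \]
If $|E_z|=1$ then the left-hand side is finite (in fact bounded by $2$) for a.e.\ $\theta$, so the series on the right converges a.e. Birkhoff's ergodic theorem applied to the rotation then forces $\int_\bbT g\, d\theta = 0$, and hence $g\equiv 0$ a.e. Translating by the covariance and intersecting over the countable family $n\in\bbZ$ yields $P_\theta\delta_n^\pm = 0$ a.e.\ for all $n,\pm$, i.e.\ $P_\theta \equiv 0$ a.e., contradicting $|E_z|=1$.

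For (b), norm-continuity of $\theta\mapsto W_\theta$ combined with minimality of the irrational rotation $\theta\mapsto\theta+\Phi$ shows that $\sigma(W_{\lambda_1,\lambda_2,\Phi,\theta}) = \Sigma_{\lambda_1,\lambda_2,\Phi}$ for every $\theta$. Were some $z_0\in\Sigma_{\lambda_1,\lambda_2,\Phi}$ isolated in the spectrum, it would be an isolated point of $\sigma(W_\theta)$ for every $\theta$, and hence (as isolated spectral points of a normal operator are always eigenvalues) $z_0$ would be an eigenvalue of $W_\theta$ for every $\theta$. This would give $E_{z_0} = \bbT$, contradicting part (a). The main obstacle I expect is just the bookkeeping to convert the covariance into the summation formula for $\mathrm{tr}(P_\theta)$, and to make sure the a.e.-in-$\theta$ finite multiplicity statement is valid even in the boundary case $\lambda_2=1$; otherwise the ergodic-theoretic part is standard.
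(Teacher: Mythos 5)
Your proof is correct and follows essentially the same route as the paper's: the classical Pastur-type argument combining shift-covariance of the spectral projection $\chi_{\{z\}}(W_{\lambda_1,\lambda_2,\Phi,\theta})$, ergodicity of the irrational rotation, and the bound $\dim\ker(W_\theta - z)\le 2$ from Proposition~\ref{prop:cocycle:tmMain}, with part (b) deduced from the fact that an isolated spectral point of a unitary operator is an eigenvalue. The only (immaterial) slip is the direction of the conjugating shift: with the coins of \eqref{eq:moddefs:coinDef} one has $U W_{\lambda_1,\lambda_2,\Phi,\theta+\Phi} U^* = W_{\lambda_1,\lambda_2,\Phi,\theta}$ for $U:\delta_n^\pm\mapsto\delta_{n+1}^\pm$, i.e.\ the opposite shift to your $T$.
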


\begin{proof}
This is a well-known argument using ergodicity; compare \cite{CFKS, Pastur1980CMP}. Since our setting is slightly different than the other settings in which this has been proved, we include the details for the reader's convenience.
Indeed, the spectral projector $\chi_{\{z\}}(W_{\lambda_1, \lambda_2, \Phi, \theta})$ is a weakly measurable function of $\theta \in \bbT$ that is covariant with respect to the shift $U:\delta_n^\pm \mapsto \delta_{n+1}^\pm$, on account of the identity:
\[U W_{\lambda_1, \lambda_2, \Phi, \Phi + \theta} U^* = W_{\lambda_1, \lambda_2, \Phi, \theta}. \] 
By ergodicity, the trace of $\chi_{\{z\}}(W_{\lambda_1, \lambda_2, \Phi, \theta})$ is almost-surely constant in $\theta$, and the almost-sure constant value must be $0$ or $\infty$.
Since $0 < \lambda_1$, Proposition~\ref{prop:cocycle:tmMain} implies the eigenspace corresponding to eigenvalue $z$ is always finite-dimensional, and hence we have $\chi_{\{z\}}(W_{\lambda_1, \lambda_2, \Phi, \theta}) = 0$ for a.e.\ $\theta$, proving part~\ref{prop:cocycle:pastur:eig}. Since an isolated point of the spectrum is necessarily an eigenvalue, part~\ref{prop:cocycle:pastur:iso} follows immediately.
\end{proof}

\begin{remark}
In fact, the reader can check that the conclusions \ref{prop:cocycle:pastur:eig} and \ref{prop:cocycle:pastur:iso} of Proposition~\ref{prop:cocycle:pastur} are true when $\lambda_1 = 0$ as long as $\lambda_2 > 0$. 
Of course, one can check that $\Sigma_{0,0,\Phi} = \{\pm i\}$ and $\pm i$ are eigevalues of infinite multiplicity for arbitrary phase, and hence the conclusions do not extend to $\lambda_1 = \lambda_2 = 0$.
\end{remark}

At this point, one can prove the trivial parts of Theorem~\ref{t:maintype}.

\begin{proof}[Proof of Theorems~\ref{t:maintype}.\ref{t:maintype:lambda1=0}, and \ref{t:maintype}.\ref{t:maintype:lambda2=0}]
Suppose first $\lambda_2 = 0$ and $\lambda_1>0$. 
The transfer matrix cocycle is constant and given by
\[ A_{\lambda_1, 0,z}(\theta) = i \begin{bmatrix} \lambda_1^{-1}z^{-1} + z{\lambda_1'}^2\lambda_1^{-1} & -\lambda_1'z \\ -\lambda_1'z & \lambda_1 z \end{bmatrix}
= i\begin{bmatrix} \lambda_1^{-1}z^{-1} + z(\lambda_1^{-1} - \lambda_1) & -\lambda_1'z \\ -\lambda_1'z & \lambda_1 z \end{bmatrix}. \]
In particular, the spectral type is purely absolutely continuous by Floquet theory and the spectrum may be computed from the discriminant.
Normalizing the determinant, we have
\begin{equation}
 \tr(-i A_{\lambda_1, 0 , z}(\theta)) = 2 \ \Re(\lambda_1^{-1} z), \quad z \in \partial\bbD, \end{equation}
and hence
\begin{align*}
\Sigma_{\lambda_1,0,\Phi} 
= \{z \in \partial \bbD : \tr(-i A_{\lambda_1, 0 , z}(\theta)) \in [-2,2]\} 
& = \{z \in \partial \bbD : \Re(\lambda_1^{-1}z) \in [-1,1] \} \\
& = \{z \in \partial \bbD : |\Re(z)| \leq \lambda_1 \},
\end{align*}
as desired.
Now, suppose $\lambda_1 = 0$ (and note that the the cocycle is no longer well-defined).
In this case, the shift is given  by $S_0\delta_n^\pm = \pm \delta_n^\mp$, so $W = S_0 Q_{\lambda_2,\Phi,\theta}$ is a direct sum of $2\times 2$ blocks of the form
\begin{equation}
Q_{\lambda_2,\Phi,\theta, n}' = 
\begin{bmatrix}
-\lambda_2 \sin(2\pi(n\Phi+\theta))
& - (\lambda_2 \cos(2\pi(n\Phi+\theta)) - i\lambda_2') \\
\lambda_2 \cos(2\pi(n\Phi+\theta)) + i\lambda_2'
& - \lambda_2 \sin(2\pi(n\Phi+\theta))
\end{bmatrix},
\end{equation}
and hence has pure point spectral type.

It is straightforward to check that the eigenvalues of $Q_{\lambda_2,\Phi,\theta, n}'$ are 
\[ -\lambda_2 \sin(2\pi(n\Phi+\theta)) \pm i \sqrt{{\lambda_2'}^2+\lambda_2^2 \cos^2(2\pi(n\Phi+\theta))}.\]
This shows the spectrum of $W_{0,\lambda_2,\Phi,\theta}$ is $\{ z \in \partial \bbD : |\Re(z)| \leq \lambda_2\}$ whenever $\Phi$ is irrational.
\end{proof}

\subsection{The Herman Estimate}

Let us show that the Lyapunov exponent is uniformly positive throughout the region $\lambda_1<\lambda_2$.

\begin{theorem} \label{thm:posle}
	Let $\Phi$ be irrational. For all $\lambda_1,\lambda_2,z$, we have
\begin{equation} \label{eq:lyap:hermanbound}
L_{\lambda_1,\lambda_2,\Phi}(z) \geq \log\left[ \frac{\lambda_2(1+\lambda_1')}{\lambda_1(1+\lambda_2')} \right].
\end{equation}
In particular, $L_{\lambda_1,\lambda_2,\Phi}(z) > 0$ for all $z \in \partial \bbD$ whenever $0< \lambda_1 < \lambda_2 \leq 1$ and
\begin{equation}
\lim_{\lambda_1\downarrow 0} L_{\lambda_1,\lambda_2,\Phi}(z)
= \infty.
\end{equation}
for all $z \in \partial \bbD$, $\lambda_2>0$, and irrational $\Phi$.
\end{theorem}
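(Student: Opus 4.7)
The plan is a Herman-style convexity argument applied to the regularized cocycle $B = B_{\lambda_1,\lambda_2,z}$, which by construction shares its Lyapunov exponent with $A = A_{\lambda_1,\lambda_2,z}$. Since each entry of $B(\theta)$ is a Laurent polynomial in $e^{2\pi i\theta}$ of degrees between $-1$ and $+1$, $B$ extends to an entire matrix-valued function of $\theta\in\bbC$, so for every $y\in\bbR$ one may define
\[ \ell(y) \;:=\; \lim_{n\to\infty}\frac{1}{n}\int_\bbT \log\bigl\|B^{n,\Phi}(\theta+iy)\bigr\|\,d\theta, \]
with the iterates $B^{n,\Phi}$ given in the same way as \eqref{eq:cociterates}. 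Then $\ell(0) = L_{\lambda_1,\lambda_2,\Phi}(z)$, and by subharmonicity of $\zeta\mapsto \log\|B^{n,\Phi}(\zeta)\|$ combined with Fubini, $\ell$ is convex in $y$.

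Next I would read off the asymptotic behavior of $\ell(y)$ as $y\to+\infty$. Using
\[ \sin\bigl(2\pi(\theta+iy)\bigr) \;=\; \tfrac{i}{2}\,e^{2\pi y}\,e^{-2\pi i\theta}\bigl(1 + O(e^{-4\pi y})\bigr), \]
the $\sin$-containing entries of $B(\theta+iy)$ dominate and yield the factorization
\[ B(\theta+iy) \;=\; \frac{i\,e^{2\pi y}\,e^{-2\pi i\theta}}{\lambda_1(1+\lambda_2')}\bigl(M + O(e^{-2\pi y})\bigr),\qquad M \;:=\; \begin{bmatrix} 2\lambda_1'\lambda_2 & -\lambda_1\lambda_2 \\ -\lambda_1\lambda_2 & 0 \end{bmatrix}. \]
A direct calculation gives eigenvalues $\lambda_2(1+\lambda_1')$ and $\lambda_2(\lambda_1'-1)$ of $M$, so $\spr(M) = \lambda_2(1+\lambda_1')$. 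Because the scalar prefactor has $\theta$-independent modulus $e^{2\pi y}/[\lambda_1(1+\lambda_2')]$, it factors cleanly out of the cocycle product, giving
\[ \ell(y) \;=\; 2\pi y \;-\; \log\bigl[\lambda_1(1+\lambda_2')\bigr] \;+\; L(\tilde B_y), \]
where $\tilde B_y(\theta) = M + O(e^{-2\pi y})$ converges uniformly to $M$.

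In the supercritical range $0<\lambda_1<\lambda_2\leq 1$---the only range in which \eqref{eq:lyap:hermanbound} has content---the two eigenvalues of $M$ have distinct modulus, so $M$ is hyperbolic. A standard invariant-cone argument around the dominant eigendirection of $M$ then shows $L(\tilde B_y)\to \log\spr(M)$, whence $\ell(y) - 2\pi y \to c := \log\frac{\lambda_2(1+\lambda_1')}{\lambda_1(1+\lambda_2')}$ as $y\to+\infty$. The convex function $g(y) := \ell(y) - 2\pi y$ therefore has a finite limit at $+\infty$, which forces its right-derivatives to be nonpositive everywhere (else $g$ would escape to $+\infty$), so $g$ is non-increasing and $L_{\lambda_1,\lambda_2,\Phi}(z) = \ell(0) = g(0) \geq c$. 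In the complementary parameter regions one has $\lambda_0\leq 1$, and \eqref{eq:lyap:hermanbound} reduces to $L\geq 0$, which follows from $\|X\|^2 \geq |\det X|$ for $2\times 2$ matrices together with the identity $\int_\bbT \log|\det B(\theta)|\,d\theta = 0$ (verified via Jensen/Mahler applied to $\lambda_2^2\cos^2 + \lambda_2'^2 = (\lambda_2\cos-i\lambda_2')(\lambda_2\cos+i\lambda_2')$).

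The two stated consequences are then immediate: $0<\lambda_1<\lambda_2$ forces $\lambda_0>1$ and thus $L>0$ on $\partial\bbD$, while $\lambda_0\to\infty$ as $\lambda_1\downarrow 0$ yields $L\to\infty$. The main technical step is the convergence $L(\tilde B_y)\to \log\spr(M)$: upper semicontinuity of Lyapunov exponents under uniform perturbations is automatic, but the matching lower bound relies on the spectral gap of $M$ in the supercritical regime---the cones invariant under $M$ remain approximately invariant under the exponentially small perturbations $\tilde B_y$, producing the desired growth rate.
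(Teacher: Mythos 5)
Your argument is correct, and the core computation is the same as the paper's (the limiting matrix $M$, its spectral radius $\lambda_2(1+\lambda_1')$, and the prefactor $\lambda_1(1+\lambda_2')$), but the way you transport the information from $y=+\infty$ back to $y=0$ is genuinely different. The paper runs the classical Herman trick in one shot: it writes $B_z(\theta)=w^{-1}N_z(w)$ with $w=e^{2\pi i\theta}$ and $N_z$ entire, applies the sub-mean-value inequality $\frac1n\int_{\bbT}\log\|N_z^n(e^{2\pi i\theta})\|\,d\theta\geq\frac1n\log\|N_z^n(0)\|$, and evaluates the right-hand side exactly by Gelfand's formula, since $N_z^n(0)=N_z(0)^n$ is a power of a single constant matrix. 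No continuity of the Lyapunov exponent is needed anywhere. You instead establish the exact asymptotics $\ell(y)-2\pi y\to\log\lambda_0$ of the complexified exponent and pull the bound back by convexity; this is closer to the computation the paper performs later (Proposition~\ref{prop:cocycle:LBeAsymptotic}) for the global-theory classification, and it forces you to prove the lower semicontinuity $\liminf_y L(\tilde B_y)\geq\log\spr(M)$, which you correctly supply via an invariant-cone argument valid because $M$ is hyperbolic precisely in the supercritical range (eigenvalues $\lambda_2(\lambda_1'\pm1)$ have distinct moduli when $\lambda_1<1$), the only range where the bound is nontrivial. So your route buys a sharper statement (the exact large-$y$ slope and intercept, useful for the acceleration computation) at the cost of an extra dynamical input, while the paper's route is more economical for the inequality alone. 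Your handling of the trivial regimes ($L\geq0$ via $\|X\|^2\geq|\det X|$ and $\int_\bbT\log|\det B|\,d\theta=0$) and of the two corollaries is fine.
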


Of course, if $\lambda_2 \leq \lambda_1$, the right-hand side of \eqref{eq:lyap:hermanbound} is nonpositive and hence the content of the theorem is empty in that case.

The central argument in the proof of Theorem~\ref{thm:posle} is Herman's argument via complexification and subharmonicity \cite{herman}.
To get the sharpest possible inequality, we need to compute exactly a specific integral.
We only need the $\varepsilon=0$ case for the Herman estimate, but we will need the calculation for nonzero $\varepsilon$ for the eventual classification of cocycle behavior.

\begin{lemma} \label{lem:le:logIntegral}
For all $t \in [0,1]$, denote $\varepsilon_0=\varepsilon_0(t) = \frac{1}{2\pi} \mathrm{arcsinh}\sqrt{t^{-2}-1}$.
For $t \in [0,1]$ and $\varepsilon \in \bbR$, one has
\begin{align} \label{eq:specan:logIntegral}
\int_0^1 \log\left|t\cos(2\pi (\theta+i\varepsilon)) - i\sqrt{1-t^2} \right| \, d\theta 
& = \begin{cases}
 \log\left[\frac{1+\sqrt{1-t^2}}{2} \right] - 2\pi(\varepsilon + \varepsilon_0) & \varepsilon \leq -\varepsilon_0 \\[2mm]
 \log\left[\frac{1+\sqrt{1-t^2}}{2} \right] & - \varepsilon_0 \leq \varepsilon  \leq \varepsilon_0 \\[2mm]
  \log\left[\frac{1+\sqrt{1-t^2}}{2} \right] + 2\pi(\varepsilon - \varepsilon_0) & \varepsilon \geq \varepsilon_0.
\end{cases} \\[2mm]
& = \log\left[ \frac{1+\sqrt{1-t^2}}{2} \right] + 2\pi \max\{0,|\varepsilon|-\varepsilon_0\}.
\end{align}
\end{lemma}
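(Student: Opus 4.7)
The plan is a standard complex-analytic calculation via Jensen's formula. Substitute $z = e^{2\pi i(\theta+i\varepsilon)}$, so that as $\theta$ ranges over $[0,1]$, the variable $z$ traces the circle $|z| = r$ with $r = e^{-2\pi \varepsilon}$, and $\cos(2\pi(\theta+i\varepsilon)) = (z+z^{-1})/2$. Consequently,
\[
 t\cos(2\pi(\theta+i\varepsilon)) - i\sqrt{1-t^2}
  = \frac{t}{2z}\left(z^2 - \frac{2i\sqrt{1-t^2}}{t}z + 1\right)
  = \frac{t}{2z}(z-z_+)(z-z_-),
\]
where the quadratic formula gives $z_\pm = \tfrac{i}{t}\bigl(\sqrt{1-t^2}\pm 1\bigr)$. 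A direct computation shows $z_+z_- = 1$, so $|z_+|\geq 1\geq |z_-|$, and in fact $|z_+| = (1+\sqrt{1-t^2})/t$.

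Next, I would identify this modulus with the quantity $2\pi\varepsilon_0$. Using $\mathrm{arcsinh}(x) = \log(x+\sqrt{x^2+1})$ with $x = \sqrt{t^{-2}-1}$ yields
\[
 e^{2\pi \varepsilon_0} = \sqrt{t^{-2}-1} + \sqrt{(t^{-2}-1)+1} = \frac{\sqrt{1-t^2}+1}{t} = |z_+|,
\]
hence $\log|z_+| = 2\pi\varepsilon_0$ and $\log|z_-| = -2\pi\varepsilon_0$. Now apply Jensen's formula
\[
 \int_0^1 \log|re^{2\pi i\theta} - z_0|\, d\theta = \max\{\log r,\ \log|z_0|\}
\]
to each factor $|z - z_\pm|$, and combine with $\log|t/(2z)| = \log(t/2) + 2\pi\varepsilon$ (a constant in $\theta$). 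The integral in question then becomes
\[
 \log(t/2) + 2\pi\varepsilon + \max\{-2\pi\varepsilon,\ 2\pi\varepsilon_0\} + \max\{-2\pi\varepsilon,\ -2\pi\varepsilon_0\}.
\]

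Finally, a short case analysis on the sign of $\varepsilon\pm \varepsilon_0$ produces the three-case expression, which I would then rewrite by noting that the constant term simplifies as $\log(t/2) + 2\pi\varepsilon_0 = \log\bigl((1+\sqrt{1-t^2})/2\bigr)$, so the three cases unify as $\log\bigl((1+\sqrt{1-t^2})/2\bigr) + 2\pi\max\{0,|\varepsilon|-\varepsilon_0\}$, as claimed. I do not expect any real obstacle; the only items to keep an eye on are (i) integrability when $r$ coincides with $|z_\pm|$, which is harmless because the singularity is logarithmic, and (ii) bookkeeping of the prefactor $t/(2z)$ to confirm that the three cases pair up symmetrically under $\varepsilon\mapsto -\varepsilon$, consistent with the expected invariance of the integrand's modulus.
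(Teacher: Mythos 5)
Your proof is correct, and it takes a genuinely different route from the paper. You factor the integrand as $\frac{t}{2z}(z-z_+)(z-z_-)$ with $z=e^{2\pi i(\theta+i\varepsilon)}$ and evaluate each factor directly via the mean-value formula $\int_0^1\log|re^{2\pi i\theta}-z_0|\,d\theta=\max\{\log r,\log|z_0|\}$; the identities $z_+z_-=1$ and $\log|z_\pm|=\pm2\pi\varepsilon_0$ then yield the three-case answer by inspection, and I have checked that the case analysis and the simplification $\log(t/2)+2\pi\varepsilon_0=\log\bigl[(1+\sqrt{1-t^2})/2\bigr]$ all come out right. The paper instead fixes the endpoint values $G(0,0)$ and $G(1,0)$, differentiates under the integral sign first in $t$ and then in $\varepsilon$, and computes the resulting contour integrals by locating poles inside the unit disk and summing residues. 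Your approach is shorter and more self-contained: it needs no anchoring at special parameter values, no differentiation under the integral, and no residue bookkeeping, at the price of invoking the (classical, easily justified) Jensen mean-value identity including the boundary case $r=|z_\pm|$, which you correctly flag as harmless since the singularity is logarithmic. The only degenerate point is $t=0$, where your prefactor $\log(t/2)$ is $-\infty$ and the factorization breaks down, but there the integrand is identically $\log|-i|=0$ and both sides vanish trivially, exactly as the paper notes in its remark following the lemma.
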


\begin{proof}
When $t=0$, one has $\sqrt{1-t^2}=1$ and $\varepsilon_0(0)=\infty$, so that both sides of  \eqref{eq:specan:logIntegral} are zero for all $\varepsilon$. Thus, the $t=0$ case is trivial. For $0<t \leq 1$, define
\[ g(z) = \frac{t}{2} e^{-2\pi \varepsilon}z^2 - i  \sqrt{1-t^2} z + \frac{t}{2} e^{2\pi \varepsilon} \]
so that $|g(e^{2\pi i\theta})| = |e^{-2\pi i \theta}g(e^{2\pi i \theta})| =  |t \cos(2\pi(\theta+i\varepsilon)) - i  \sqrt{1-t^2}|$. The desired result follows from Jensen's formula applied to $g$.
\end{proof}

With the desired integral in hand, we now prove the theorem. Of course, the Herman estimate will be superseded by the global theory classification; since it is short and self-contained, we include the proof of positivity for the convenience of the reader who is unfamiliar with the Herman argument.

\begin{proof}[Proof of Theorem~\ref{thm:posle}]
Let $\lambda_1$, $\lambda_2$, $\Phi$, and $z$ be given. The transfer matrix cocycle of $W_{\lambda_1, \lambda_2, \Phi, \theta}$ is given by \eqref{eq:AlambdaCocycleDef}:
\begin{align*}
A&_{\lambda_1,\lambda_2,z}(\theta)	\\
&= \frac{1}{\lambda_2 \cos(2\pi \theta) - i\lambda_2'}
\begin{bmatrix}\lambda_1^{-1}z^{-1}+2\lambda_1'\lambda_1^{-1}\lambda_2 \sin(2\pi \theta) + z{\lambda_1'}^2\lambda_1^{-1}	&	-\lambda_2 \sin(2\pi \theta)-\lambda_1'z	\\	- \lambda_2 \sin(2\pi \theta) -\lambda_1'z	&	\lambda_1z	\end{bmatrix}.
\end{align*}
We view $\lambda_1$, $\lambda_2$, and $\Phi$ as fixed, so we suppress them from the subscripts throughout the argument, simply writing, for instance, $A_z(\theta)$ and $L(z)$ instead of $A_{\lambda_1,\lambda_2,z}(\theta)$ and $L_{\lambda_1,\lambda_2,\Phi}(z)$.
 Note that for fixed $z \in \partial \bbD$, this map is analytic (as a function of $\theta$) for $\lambda_2<1$ and meromorphic when $\lambda_2 = 1$.
		
Denote the regularized transfer matrices by $B_z(\theta)$:
	\begin{align}
	\label{eq:cocycle:BzRegularizedDef}
B_z(\theta)
& = \left[\frac{2(\lambda_2 \cos(2\pi \theta)-i\lambda_2')}{1+\lambda_2'} \right] A_z(\theta) \\
& = \frac{2}{\lambda_1(1+\lambda_2')}
\begin{bmatrix} z^{-1}+2\lambda_1'\lambda_2 \sin(2\pi\theta) + z{\lambda_1'}^2
&	-\lambda_1(\lambda_2 \sin(2\pi\theta) + \lambda_1'z)	\\	
-\lambda_1(\lambda_2 \sin(2\pi\theta)+\lambda_1'z	)
&	\lambda_1^2 z	\end{bmatrix}.
	\end{align}
	By Lemma~\ref{lem:le:logIntegral} (with $\varepsilon=0$), we note that
\begin{equation} \label{eq:lyap:LBztoLAzViaRes}
L(z) 
= \lim_{n\to\infty} \frac{1}{n} \int_0^1 \log\|A_z^n(\theta)\| \, d\theta
= \lim_{n\to\infty} \frac{1}{n} \int_0^1 \log\|B_z^n(\theta)\| \, d\theta
=: L(\Phi,B_z).
\end{equation}
Write $w = \exp(2\pi i \theta)$ and define $M_z(w)$ to mean $B_z$ recontextualized as a function of $w$, that is
\[
M_z(w) 
=  \frac{2}{\lambda_1(1+\lambda_2')} 
\begin{bmatrix} 
z^{-1}+2\lambda_1'\lambda_2 \left[\frac{w-w^{-1}}{2i}\right]+ z{\lambda_1'}^2
&	-\lambda_1(\lambda_2 \left[\frac{w-w^{-1}}{2i}\right] + \lambda_1'z)	\\	
-\lambda_1(\lambda_2 \left[\frac{w-w^{-1}}{2i}\right]+\lambda_1'z	)
&	\lambda_1^2 z	
\end{bmatrix}
\]
Finally, let $N_z(w) = wM_z(w)$, which one can check is an entire function $N_z:\bbC \to \GL(2,\bbC)$.
Notice that
\begin{equation} \label{eq:lyap:Nz0form}
N_z(0)
= \frac{i\lambda_2}{\lambda_1(1+\lambda_2')}
\begin{bmatrix} 2  \lambda_1'
&	- \lambda_1 	\\	
- \lambda_1 
&	0
\end{bmatrix}.
\end{equation}
Given $n \in \bbN$, we define the iterates of $M_z$ and $N_z$ in the usual manner (keeping in mind that we are writing the circle multiplicatively here):
\begin{equation*}
M_z^n(w)
:= \prod_{k=n-1}^0 M_z(e^{2\pi i k \Phi }w), \quad
N_z^n(w)
:= \prod_{k=n-1}^0 N_z(e^{2\pi i k \Phi }w).
\end{equation*}
In view of the identity $N^n_z(w) = w^n M_z^n(w)$ and the definion of $M_z$, one has
\begin{equation} \label{eq:lyap:normNktoNormBk}
\|N^n_z(e^{2\pi i \theta})\| 
= \|M^n_z(e^{2\pi i \theta})\| 
= \|B^n_z(\theta)\|
\end{equation}
for all $\theta \in \bbT$.
Since $N^n_z(\cdot)$ is analytic, it follows that $\log\|N^n_z(\cdot)\|$ is subharmonic.
	Using the definition of the Lyapunov exponent, \eqref{eq:lyap:normNktoNormBk}, subharmonicity of $\log\|N_z^n\|$, \eqref{eq:lyap:Nz0form}, and Gelfand's formula (in that order), we get
\begin{align}
\nonumber
L(\Phi,B_z) & = 
\lim_{n \to\infty} \frac{1}{n} \int_0^1 \log\|B_z^n(\theta)\| \, d\theta \\
\nonumber
& = \lim_{n \to\infty} \frac{1}{n} \int_0^1 \log\|N_z^n(e^{2\pi i\theta})\| \, d\theta\\
\nonumber
& \geq \limsup_{n \to\infty} \frac{1}{n} \log\|N_z^n(0)\|	\\[2mm]
\nonumber
& = \lim_{n \to\infty}\left[ \log \left[ \frac{\lambda_2}{\lambda_1(1+\lambda_2')}\right]+ \frac{1}{n} \log\left\|
\begin{bmatrix} 2\lambda_1'	&	-\lambda_1	\\	-\lambda_1	&	0	\end{bmatrix}^n\right\|	\right] \\[2mm]
\label{eq:lyap:LBtolog+spr}
& = \log \left[ \frac{\lambda_2}{\lambda_1(1+\lambda_2')}\right]+  \log \spr
\begin{bmatrix} 2\lambda_1'	&	-\lambda_1	\\	-\lambda_1	&	0	\end{bmatrix},
	\end{align}
	where $\mathrm{spr}$ denotes the spectral radius. One can check that the matrix in the last expression has eigenvalues $\lambda_1' \pm 1$, which yields 
\begin{equation} \label{eq:sprBlambda1}
\spr\begin{bmatrix} 2\lambda_1'	&	-\lambda_1	\\	-\lambda_1	&	0	\end{bmatrix} = 1 + \lambda_1'.
\end{equation} 
Combining \eqref{eq:sprBlambda1} with \eqref{eq:lyap:LBtolog+spr} and putting this together with \eqref{eq:lyap:LBztoLAzViaRes}, we have obtained
\begin{equation} 
L_{\lambda_1,\lambda_2,\Phi}(z) = L(\Phi,B_z) \geq
\log \left[ \frac{\lambda_2(1 + \lambda_1')}{\lambda_1(1 + \lambda_2')} \right],
\end{equation}
which concludes the proof.
\end{proof}

\subsection{Analytic One-Frequency Cocycles: A Brief Review}

Let $A:\bbT \to \bbC^{2\times 2}$ be continuous and $\Phi \in \bbT$ irrational.
Recall from \eqref{eq:cociterates} that the iterates of $A$ are given by
\begin{equation}
A^{n,\Phi}(\theta) = A((n-1)\Phi+\theta) \cdots A(\Phi+\theta)A(\theta), \quad n \in \bbN
\end{equation} 
and the Lyapunov exponents of the cocycle $(\Phi,A)$ are given by
\begin{align}
L_1(\Phi,A) & = \lim_{n\to\infty} \frac{1}{n} \int_{\bbT} \log \| A^{n,\Phi}(\theta)\|\, d\theta \\
\label{eq:cocycle:L2def}
L_2(\Phi,A) & = \lim_{n\to\infty} \frac{1}{n} \int_{\bbT} \log \| [A^{n,\Phi}(\theta)]^{-1} \|^{-1} \, d\theta.
\end{align}
In \eqref{eq:cocycle:L2def}, $\|\cdot^{-1}\|^{-1}$ simply computes the smaller singular value and hence one should interpret $\|A^{-1}\|^{-1} = 0$ if $A$ is not invertible. 
We will focus almost exclusively on the upper Lyapunov exponent and hence simply write $L(\Phi,A)$ instead of $L_1(\Phi,A)$.

\begin{definition}
We say that $(\Phi,A)$ enjoys a \emph{dominated splitting} if there is a continuous splitting $\bbC^2 = \Lambda_1(\theta) \oplus \Lambda_2(\theta)$ into one-dimensional subspaces with
\[ A(\theta)\Lambda_j(\theta) \subseteq \Lambda_j(\theta+\Phi) \]
and such that there exists $k \in \bbN$ such that
\[ \|A^{k,\Phi}(\theta) v_1 \| > \|A^{k,\Phi}(\theta) v_2 \| \]
for all $\theta \in \bbT$ and all unit vectors $v_j \in \Lambda_j(\theta)$
\end{definition}

If in addition $A$ is analytic with an analytic extension to a strip $\{ \theta + i \varepsilon : \theta \in \bbT, \ |\varepsilon| < \delta \}$, we may consider the complexified cocycle map and its Lyapunov exponent as in \eqref{eq:complexifiedMdef} and \eqref{eq:complexifiedLEdef}: $L(\Phi,A,\varepsilon) = L(\Phi,A(\cdot+i\varepsilon))$.

\begin{definition}
The \emph{acceleration} of $A$ is defined by
\[\omega(A) = \lim_{\varepsilon \downarrow 0} \frac{1}{2\pi \varepsilon} (L(\Phi,A,\varepsilon) - L(\Phi,A)).\]
More generally, for $|\eta|<\delta$ we also denote
\[\omega(A,\eta) = \omega(A(\cdot+i\eta))
= \lim_{\varepsilon\downarrow 0} \frac{1}{2\pi\varepsilon} (L(\Phi,A,\eta+\varepsilon) - L(\Phi,A,\eta)).\]
\end{definition}

Let us collect the main properties of this apparatus.
For details and proofs, we direct the reader to Avila \cite{Avila2015Acta} (who proved the result for unimodular cocycles) and Jitomirskaya--Marx \cite{JitoMarx2012CMP, JitoMarx2012CMPErr} (who extended the result to singular cocycles). See also Avila--Jitomirskaya--Sadel for the generalization to higher dimensions \cite{AJS2014JEMS}.

\begin{theorem}[Avila \cite{Avila2015Acta}, Jitomirskaya--Marx \cite{JitoMarx2012CMP, JitoMarx2012CMPErr}] \label{t:avilaGlobal}
Suppose $A:\bbT \to \bbC^{2\times 2}$ is analytic with an analytic extension to the strip $\{ \theta + i \varepsilon : \theta \in \bbT, \ |\varepsilon| < \delta \}$.
\begin{enumerate}[label={\rm (\alph*)}]
\item The function $\varepsilon \mapsto L(\Phi,A,\varepsilon)$ is continuous, convex, and piecewise affine.
\smallskip
 
\item $\omega(A,\eta) \in \frac{1}{2}\bbZ$ for all $|\eta|<\delta$.
\smallskip

\item If $A(\theta) \in \SL(2,\bbC)$ for all $\theta \in \bbT$, then $\omega(A,\eta) \in \bbZ$ for all $|\eta|<\delta$.
\smallskip

\item If $L_1(\Phi,A)>L_2(\Phi,A)$ and $\varepsilon \mapsto L(\Phi,A,\varepsilon)$ is affine on a neighborhood of $\varepsilon = 0$, then $(\Phi,A)$ enjoys a dominated splitting.
\end{enumerate}
\end{theorem}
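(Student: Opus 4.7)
The plan is to follow the strategy developed by Avila, extended to singular (non-SL$(2,\bbC)$) cocycles by Jitomirskaya--Marx. The central observation is that the complexified Lyapunov exponent $\varepsilon \mapsto L(\Phi,A,\varepsilon)$ inherits strong rigidity from the subharmonicity of $\log\|A^{n,\Phi}(z)\|$ as a function of $z \in \bbC$ with $|\Im z|<\delta$.

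First I would address parts (a): continuity and convexity. Define
\[
\phi_n(\varepsilon) = \int_\bbT \log\|A^{n,\Phi}(\theta+i\varepsilon)\|\, d\theta.
\]
Since $A^{n,\Phi}(z)$ is holomorphic on the strip, $\log \|A^{n,\Phi}(z)\|$ is subharmonic, and averaging a subharmonic function over horizontal segments gives a convex function of $\varepsilon$ (an elementary consequence of the submean inequality). Hence each $\phi_n$ is convex, and $L(\Phi,A,\varepsilon) = \lim_n \phi_n(\varepsilon)/n$ is also convex. Convexity on an open interval gives continuity.

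Next I would address quantization of acceleration, which yields parts (b), (c) and the piecewise-affine part of (a). The key identity is
\[
L_1(\Phi,A,\varepsilon)+L_2(\Phi,A,\varepsilon) = \int_\bbT \log|\det A(\theta+i\varepsilon)|\, d\theta,
\]
using subadditivity together with the fact that the determinant cocycle is scalar. The right-hand side is the mean over $\bbT$ of $\log|f(\theta+i\varepsilon)|$ for an analytic function $f = \det A$ (or a Blaschke-like factor for its meromorphic extension in the singular case). Jensen's formula then identifies this with a piecewise-affine function of $\varepsilon$ whose slopes jump by integer amounts, counted by zeros/poles of $f$ crossing lines $\Im z = \varepsilon$. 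Combined with convexity of $L_1$ and $L_2 = (L_1+L_2)-L_1$, and the fact that the sum has integer slopes, one extracts that $2\omega(A,\eta) \in \bbZ$. When $A \in \mathrm{SL}(2,\bbC)$, $\det A \equiv 1$ makes $L_1+L_2 \equiv 0$ on the strip, removing the factor of two and giving $\omega(A,\eta) \in \bbZ$.

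Finally, for (d) I would argue as follows. If $L_1>L_2$ and $\varepsilon \mapsto L(\Phi,A,\varepsilon)$ is affine near $0$, then by part (a) it is affine on some symmetric neighborhood and the gap $L_1(\Phi,A,\varepsilon)-L_2(\Phi,A,\varepsilon)$ remains positive there. Affine behavior of $L_1$ in a complex neighborhood together with a gap is precisely the statement that the complexified cocycle is \emph{uniformly hyperbolic} on that strip, by the Johnson-type characterization of uniform hyperbolicity via regularity of the Lyapunov exponent. Restricting the resulting invariant holomorphic splitting to $\varepsilon = 0$ yields a continuous invariant splitting $\bbC^2 = \Lambda_1(\theta)\oplus \Lambda_2(\theta)$ with the required domination. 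The hardest step to make rigorous, and where I would expect the main obstacle, is the quantization of acceleration in the presence of singularities in $A$: one needs a Weierstrass-type factorization of the determinant cocycle to apply Jensen's formula cleanly, which is the essential technical content of the Jitomirskaya--Marx extension.
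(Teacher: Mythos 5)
First, a point of order: the paper does not prove Theorem~\ref{t:avilaGlobal} at all --- it is quoted from Avila \cite{Avila2015Acta} and Jitomirskaya--Marx \cite{JitoMarx2012CMP, JitoMarx2012CMPErr}, and the reader is explicitly directed to those references for proofs. So there is no in-paper argument to compare against; your proposal must be judged as a reconstruction of the cited work. Your treatment of continuity and convexity is correct and standard: $\log\|A^{n,\Phi}(\cdot)\|$ is subharmonic on the strip, its horizontal averages $\phi_n(\varepsilon)$ are convex, and $L=\lim_n \phi_n/n$ is a pointwise limit of convex functions, hence convex and continuous on the open strip.

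The genuine gap is in your derivation of quantization of acceleration, which is the heart of parts (a)--(c). The identity $L_1+L_2=\int_\bbT\log|\det A(\theta+i\varepsilon)|\,d\theta$ together with Jensen's formula quantizes the acceleration of the \emph{sum} $L_1+L_2$ only. Since $L_2=(L_1+L_2)-L_1$ is automatically concave wherever $L_1$ is convex, the combination ``$L_1$ convex, $L_1+L_2$ piecewise affine with integer slopes'' places no constraint at all on the individual slopes of $L_1$: any convex $L_1$ is compatible with it. Avila's actual proof of $\omega(A,\eta)\in\bbZ$ for $\SL(2,\bbC)$ cocycles applies Jensen not to $\det A$ but to the \emph{trace} of the $q$-step transfer matrix over rational approximants $p/q$ of $\Phi$, using the identity $L(p/q,A,\varepsilon)=\frac1q\int_\bbT\log\spr\bigl(A^{q,p/q}(\theta+i\varepsilon)\bigr)\,d\theta$ for periodic cocycles together with joint continuity of the Lyapunov exponent in the frequency and the cocycle; none of that machinery appears in your sketch, and it cannot be replaced by the determinant identity. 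Relatedly, your logic for the $\tfrac12\bbZ$ versus $\bbZ$ dichotomy runs backwards: part (c) is the primitive result, and part (b) follows by normalizing $A/\sqrt{\det A}$, where the holomorphic square root may exist only after doubling the period --- exactly the mechanism the paper describes in the paragraph following the theorem. Your part (d) is a reasonable pointer to the right statement (proved in \cite{Avila2015Acta} and, for non-unimodular cocycles, in \cite{AJS2014JEMS}), but invoking a ``Johnson-type characterization'' is a citation, not an argument; the construction of the holomorphic invariant splitting from affineness of $L$ in the strip is itself nontrivial.
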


As soon as one has an \emph{invertible} cocycle $A:\bbT \to \GL(2,\bbC)$, one can attempt to push into the unimodular setting by considering the normalized cocycle $A/\sqrt{\det A}$.
Thus, as discussed in \cite{JitoMarx2012CMPErr}, for \emph{invertible} (that is, $\GL(2,\bbC)$) cocycles, the central issue is whether $\det(A)$ enjoys  holomorphic square root, and this is precisely where the ``$1/2$'' in Theorem \ref{t:avilaGlobal} comes from. Namely, if $\det(A)$ has a holomorphic square root, then one can  apply Avila's result to $A/\sqrt{\det A}$ without further complication.
Otherwise, one can check that $\det(A)$ enjoys a holomorphic square root of period $2$ and hence one can apply Avila's work to a cocycle with doubled period.

Aside from the importance as an idea in dynamical systems, the notion of a dominated splitting plays a crucial role in the current paper by determining the complement of the spectrum. To spell this out in detail, we need to define the regularized GZ cocycle. Concretely, put
\[
 \widetilde P(\alpha,z) = \begin{bmatrix} - \overline{\alpha} & z \\ 1/z & -{\alpha} \end{bmatrix},
\quad \widetilde  Q(\alpha,z) =\begin{bmatrix} - \alpha & 1 \\ 1 & -\overline{\alpha} \end{bmatrix}, \]
 define
\begin{equation} \label{eq:tildeGlamdef}
\widetilde{G}_{\lambda_1,\lambda_2,z} = P(\lambda_1',z)Q(\lambda_2 \sin(2\pi\theta),z),
\end{equation}
and note that $\widetilde  G_{\lambda_1,\lambda_2,z}$ arises from $G_{\lambda_1,\lambda_2,z}$ in \eqref{eq:GZcocycledef} by clearing denominators.

As a consequence of a general result in \cite[Theorem~6.1]{FOZ2017CMP}, one has the following characterization of the almost-sure spectrum:
\begin{theorem} \label{thm:specviaDS}
If $\Phi$ is irrational, then
\begin{align}
\Sigma_{\lambda_1,\lambda_2,\Phi} 
& = \set{z \in \partial \bbD : (\Phi,\widetilde{G}_{\lambda_1,\lambda_2,z}) \text{ does not enjoy a dominated splitting}} \\
& = \set{z \in \partial \bbD : (\Phi,B_{\lambda_1,\lambda_2,z}) \text{ does not enjoy a dominated splitting}}.
\end{align}
\end{theorem}
\begin{proof}
The first line is an immediate consequence of \cite[Theorem~6.1]{FOZ2017CMP}, (and the fact that translation by $\Phi$ is a strictly ergodic isometry of $\bbT$) while the second follows from the definitions of and relationship between the various cocycles. Concretely, apply \eqref{eq:Blambdaearlydef}, \eqref{eq:szegoAzConjugacy}, \eqref{eq:szegoToGZ}, and \eqref{eq:tildeGlamdef} in that order to see that $(\Phi,B_{\lambda_1,\lambda_2,z})$ enjoys a dominated splitting if and only if $(\Phi,\widetilde G_{\lambda_1,\lambda_2,z})$ does.
\end{proof}

\subsection{Classification of Cocycles for the UAMO}

Recall that the cocycle map $A_z = A_{\lambda_1, \lambda_2,z}$ is given by \eqref{eq:AlambdaCocycleDef} and the regularized cocyle $B_z = B_{\lambda_1,\lambda_2,z}$ is given by \eqref{eq:cocycle:BzRegularizedDef}.
As before, to keep the notation cleaner in this section, we view $\lambda_1$ and $\lambda_2$ as fixed and suppress them from the notation, writing $A_z(\theta)$ and $B_z(\theta)$ for $A_{\lambda_1, \lambda_2,z}(\theta)$ and $B_{\lambda_1,\lambda_2,z}(\theta)$.

Let us point out the following: $B_z$ is always analytic, but (as we will see shortly), $A_z$ has singularities at some complex phases.
We will see concrete manifestations of this in the graph of $L(\Phi, A_z, \varepsilon)$, which we will see is \emph{not} convex. However, it is convex on subintervals of the $\varepsilon$-axis that avoid values of $\varepsilon$ for which $A_z(\cdot  + i\varepsilon)$ has singularities. We will  see that there are two such values when $0<\lambda_2<1$ and only one when $\lambda_2=1$, and moreover the exceptional values of $\varepsilon$ are explicit functions of the coupling constant attached to the coin sequence. Indeed, let us define $\varepsilon_0$ by
\begin{equation} \label{eq:cocycle:eps0Def}
\sinh(2\pi\varepsilon_0) = \lambda_2'/\lambda_2,
\end{equation}
 (formally allowing $\varepsilon_0=\infty$ when $\lambda_2=0$). Recalling Lemma~\ref{lem:le:logIntegral}, the Lyapunov exponents of the complexifications of the cocycles $A_z$ and $B_z$ are related via
\begin{equation} \label{eq:cocycle:LAtoLB}
L(\Phi,B_z,\varepsilon)
= L(\Phi,A_z,\varepsilon) + 2\pi \max\{|\varepsilon|-\varepsilon_0,0\}.
\end{equation}
In particular, $L(\Phi,A_z,\varepsilon) = L(\Phi,B_z,\varepsilon)$ whenever $|\varepsilon| \leq \varepsilon_0$.

We begin with our first key technical result: by Proposition~\ref{prop:cocycle:tmMain}
\begin{equation} \label{eq:AlambdaDet}
\det A_{\lambda_1,\lambda_2,z}(\theta) = \frac{\lambda_2 \cos(2\pi \theta) + i\lambda_2'}{\lambda_2 \cos(2\pi \theta) - i \lambda_2'},
\end{equation} 
so $A_z$ is not always a unimodular cocycle. 
Nevertheless, the acceleration of $A_z$ and the acceleration of all cocycles arising from $A_z$ via complexification of the phase are always integer-valued.

\begin{lemma}
Fix $0<\lambda_1 \leq 1$,  $0 \leq \lambda_2 \leq 1$, $\Phi \in \bbT\setminus \bbQ$, and $z \in \partial \bbD$, and let $A = A_{\lambda_1,\lambda_2,z}$ and $B_z = B_{\lambda_1,\lambda_2,z}$. We have
 \begin{equation} \label{eq:cocycle:accelAB}
 \omega(A,\varepsilon) \in \bbZ \ \forall \varepsilon \neq \pm \varepsilon_0 , \quad \text{ and } \quad \omega(B,\varepsilon) \in \bbZ \ \forall \varepsilon \in \bbR, \end{equation}
 where $\varepsilon_0$ is as in \eqref{eq:cocycle:eps0Def}.
\end{lemma}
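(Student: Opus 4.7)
My plan is to reduce the integrality of the acceleration to Avila's theorem for $\SL(2,\bbC)$-cocycles (Theorem~\ref{t:avilaGlobal}(c)) by factoring out a holomorphic square root of the determinant and then tracking the correction via Lemma~\ref{lem:le:logIntegral}. The main technical step, which I expect to be the principal obstacle, is producing the square root on suitable strips of the complex $\theta$-plane.

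First I compute the determinants. From \eqref{eq:AlambdaDet} and a direct calculation,
\[ \det A_z(\theta) = \frac{\lambda_2\cos(2\pi\theta)+i\lambda_2'}{\lambda_2\cos(2\pi\theta)-i\lambda_2'}, \qquad \det B_z(\theta) = \frac{4(1-\lambda_2^2\sin^2(2\pi\theta))}{(1+\lambda_2')^2}. \]
The zeros and poles in the complex $\theta$-plane lie exactly on the two horizontal lines $\Im\theta = \pm\varepsilon_0$, with $\varepsilon_0$ as in \eqref{eq:cocycle:eps0Def}; consequently both $A_z$ and $B_z$ are holomorphic on each of the three open strips $S_0 = \{|\Im\theta|<\varepsilon_0\}$ and $S_\pm = \{\pm \Im\theta > \varepsilon_0\}$.

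Next I show that $\det A_z$ and $\det B_z$ admit single-valued, holomorphic, non-vanishing square roots on each of $S_0, S_+, S_-$. Since each strip is topologically a cylinder, this reduces to checking that the winding of the determinant around $0$ along a horizontal loop is even. Passing to $w = e^{2\pi i \theta}$, both determinants become explicit rational functions of $w$, and the argument principle gives: for $\det A_z$, one zero and one pole lie inside $|w|=1$, yielding winding $0$ on $S_0$, and both leave the interior as we pass to $S_\pm$, so the winding remains $0$; for $\det B_z$, two simple zeros inside $|w|=1$ together with a double pole at $w=0$ give winding $0$ on $S_0$ and winding $\mp 2$ on $S_\pm$ once the zeros leave. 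All even. The degenerate case $\lambda_2 = 1$ (where $\varepsilon_0 = 0$ collapses $S_0$) is even easier, since then $\det B_z = 4\cos^2(2\pi\theta)$ has the global entire square root $2\cos(2\pi\theta)$.

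With the square roots in hand, set $\widetilde A_z := A_z/\sqrt{\det A_z}$ and $\widetilde B_z := B_z/\sqrt{\det B_z}$; these are holomorphic $\SL(2,\bbC)$-cocycles on each strip, so Theorem~\ref{t:avilaGlobal}(c) gives $\omega(\widetilde A_z,\eta), \omega(\widetilde B_z,\eta) \in \bbZ$ for $\eta$ in the interior of each strip. To transfer back, I use
\[ L(\Phi,M,\varepsilon) = L(\Phi,M/\sqrt{\det M},\varepsilon) + \tfrac{1}{2}\int_0^1 \log|\det M(\theta+i\varepsilon)|\,d\theta \]
and compute each correction via Lemma~\ref{lem:le:logIntegral}: the numerator/denominator symmetry gives $\int_0^1 \log|\det A_z(\theta+i\varepsilon)|\,d\theta \equiv 0$, while a direct calculation gives $\int_0^1 \log|\det B_z(\theta+i\varepsilon)|\,d\theta = 4\pi\max\{|\varepsilon|-\varepsilon_0,0\}$. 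Taking right-derivatives in $\varepsilon$ yields $\omega(A_z,\eta) = \omega(\widetilde A_z,\eta)$ and $\omega(B_z,\eta) - \omega(\widetilde B_z,\eta) \in \{-1,0,1\}$, both of which are integers. The boundary values $\eta = \pm\varepsilon_0$ are handled by interpreting the acceleration as a right-derivative from the adjacent strip, using the integrality already established there.
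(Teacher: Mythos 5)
Your proof is correct and follows essentially the same route as the paper's: both reduce to Avila's quantization for $\SL(2,\bbC)$-cocycles by exhibiting a holomorphic square root of the determinant on each strip between the singular lines $\Im\theta=\pm\varepsilon_0$ (verified by an even-winding computation), and both handle the boundary values $\varepsilon=\pm\varepsilon_0$ via continuity and piecewise affinity of the complexified Lyapunov exponent. The only cosmetic differences are that you locate the zeros and poles of the determinant in the $w=e^{2\pi i\theta}$ plane and invoke the argument principle where the paper argues the image of $\det A$ avoids a ray (resp.\ stays near $1$ for large $|\varepsilon|$), and that you normalize $B$ directly and compute the correction integral, whereas the paper transfers integrality from $A$ to $B$ through the identity $L(\Phi,B_z,\varepsilon)=L(\Phi,A_z,\varepsilon)+2\pi\max\{|\varepsilon|-\varepsilon_0,0\}$; the two computations are equivalent.
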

\begin{proof}
\noindent \textbf{Case 1: \boldmath $\lambda_2=0$.} In this case, $A$ and $B$ are constant cocycles, so the result follows.
\medskip

\noindent \textbf{Case 2: \boldmath $0< \lambda_2 < 1$.} In view of \eqref{eq:cocycle:LAtoLB}, it suffices to prove $\omega(A,\varepsilon)\in \bbZ$ for all $\varepsilon \neq \pm \varepsilon_0$.
 Write $\xi = \theta+i\varepsilon$, recall \eqref{eq:AlambdaDet}, and apply the sum identity for cosine to get
\begin{align} \det(A(\xi))
& = \frac{\lambda_2\cos(2\pi\xi) + i\lambda_2'}{\lambda_2\cos(2\pi\xi) - i\lambda_2'} \\[2mm]
 \label{eq:cocycle:cxADet}
& = \frac{\lambda_2 \cos(2\pi\theta)\cosh(2\pi \varepsilon)+i(\lambda_2' - \lambda_2 \sin(2\pi\theta)\sinh(2\pi \varepsilon))}{\lambda_2 \cos(2\pi\theta)\cosh(2\pi \varepsilon) - i(\lambda_2' + \lambda_2 \sin(2\pi\theta)\sinh(2\pi \varepsilon))}. 
\end{align}
\begin{figure}
	\begin{center}
		\begin{tikzpicture}[scale=.4]
		
		\definecolor{col1}{RGB}{31, 119, 180}
		\definecolor{col2}{RGB}{255, 127, 14}
		\definecolor{col3}{RGB}{44, 160, 44}
		\definecolor{col4}{RGB}{214, 39, 40}
		\definecolor{col5}{RGB}{148, 103, 189}
		\definecolor{col6}{RGB}{140, 86, 75}
		\definecolor{col7}{RGB}{227, 119, 194}
		\definecolor{col8}{RGB}{127, 127, 127}
		\definecolor{col9}{RGB}{188, 189, 34}
		\definecolor{col10}{RGB}{23, 190, 207}
		
		% draw axes for coordinate system
		\draw[help lines,->] (-6.2,0) -- (6.2,0);
		\draw[help lines,->] (0,-6.2) -- (0,6.2);
		
		% draw circle
		\draw (0,0) circle (1);
		
		% draw origin dot
		\fill (0,0)  circle (.05);
		
		\draw[col1, very thick]	plot [smooth cycle] file {plot_data/det_plots/det_plots_0.000.tab};
		\draw[col2, very thick]	plot [smooth cycle] file {plot_data/det_plots/det_plots_0.040.tab};
		\draw[col3, very thick]	plot [smooth cycle] file {plot_data/det_plots/det_plots_0.080.tab};
		\draw[col4, very thick]	plot [smooth cycle] file {plot_data/det_plots/det_plots_0.120.tab};
		\draw[col5, very thick]	plot [smooth cycle] file {plot_data/det_plots/det_plots_0.160.tab};
		\draw[col6, very thick]	plot [smooth cycle] file {plot_data/det_plots/det_plots_0.200.tab};
		\draw[col7, very thick]	plot [smooth cycle] file {plot_data/det_plots/det_plots_0.240.tab};
		\draw[col8, very thick]	plot [smooth cycle] file {plot_data/det_plots/det_plots_0.280.tab};
		
		%legend
		\def\legendposx{15.7}
		\def\legendposy{.5}
		\def\linex{1ex}
		\def\liney{.5ex}
		\def\circleshift{-0.5ex}
		\newcommand\Line[2]{+(-#1,-#2) rectangle +(#1,#2)}
		\def\colorline#1{\tikz[baseline=\circleshift]\draw[#1,fill=#1] (0,0) \Line{\linex}{\liney};}
		
		\node[anchor=west] at (\legendposx,\legendposy) {\begin{tabular}{c|l}
			&	\multicolumn{1}{c}{$\varepsilon$}		\\\hline\\[-.35cm] 
			\colorline{col1}	&	0		\\
			\colorline{col2}	&	0.04	\\
			\colorline{col3}	&	0.08	\\
			\colorline{col4}	&	0.12	\\
			\colorline{col5}	&	0.16	\\
			\colorline{col6}	&	0.20	\\
			\colorline{col7}	&	0.24	\\
			\colorline{col8}	&	0.28	\\
		\end{tabular}};
				
		\end{tikzpicture}
		\caption{Range of $\theta \mapsto \det(A(\theta+i\varepsilon))$ for several values of $\varepsilon \neq \varepsilon_0$. Here, $\lambda_2=1/\sqrt{2}$ such that $\varepsilon_0=\sinh^{-1}(\lambda_2'/\lambda_2)/(2\pi)\approx.1403$. For this value of $\varepsilon$ the range of $\theta \mapsto \det(A(\theta+i\varepsilon))$ is a circle with infinite radius along the imaginary axis.
		}
	\end{center}
\end{figure}

Recall that $\varepsilon_0$ is given by \eqref{eq:cocycle:eps0Def}.
Note that the denominator of \eqref{eq:cocycle:cxADet} vanishes if and only if $\theta = 1/4 \ \mathrm{mod} \ \bbZ$ and $\varepsilon = -\varepsilon_0$ or $\theta = 3/4 \ \mathrm{mod} \ \bbZ$ and $\varepsilon = \varepsilon_0$.
In particular, the cocycle $A(\cdot + i\varepsilon)$ has singularities when $|\varepsilon|=\varepsilon_0$ and is analytic otherwise.
Similarly, the numerator of \eqref{eq:cocycle:cxADet} has a root if and only if $\varepsilon = \pm \varepsilon_0$.

Since $\lambda_2< 1$, a direct calculation (using Proposition~\ref{prop:cocycle:tmMain}) yields
\begin{equation}\label{eq:cocycle:detPM} 
\det(A(\theta)) \in \set{e^{is} :0< 2\arctan(\lambda_2'/\lambda_2)\leq s \leq 2(\pi - 2\arctan(\lambda_2'/\lambda_2)) < 2\pi}. \end{equation}
In particular, the image of $\theta \mapsto  \det(A(\theta))$ lies in a simply connected subset of $\bbC\setminus\{0\}$ and hence $\theta \mapsto \det(A(\theta))$ has trivial winding around the origin. 
Since $\det(A(\xi)) \neq 0$ for all $\xi$ with $|\Im(\xi)| <  \varepsilon_0$, 
\begin{equation} \label{eq:cocycle:winding}
\theta \mapsto \det A(\theta+i\varepsilon) \text{ has trivial winding aroung the origin}
\end{equation}
for all $|\varepsilon|<\varepsilon_0$.
Similarly, one can check that if $|\varepsilon|$ is sufficiently large,
\[\left|1 - \det(A(\theta+i\varepsilon)) \right| < \frac{1}{2} \quad  \text{ for all } \theta \in \bbT. \]
Consequently, $\theta\mapsto \det(A(\theta+i\varepsilon))$ has trivial winding around zero for such large $|\varepsilon|$, and hence \eqref{eq:cocycle:winding} also holds true for all $|\varepsilon|>\varepsilon_0$. 
Thus, for all $\varepsilon \neq \pm\varepsilon_0$, $\det(A(\theta+i\varepsilon,z))$ enjoys a holomorphic square root, so \eqref{eq:cocycle:accelAB} for $A$ and $\varepsilon\neq\pm \varepsilon_0$ follows by applying Theorem~\ref{t:avilaGlobal} to $A(\cdot,z)/\sqrt{\det A(\cdot,z)}$.
As discussed above, the claim for $B$ follows by applying \eqref{eq:cocycle:LAtoLB}; the extension to $\varepsilon = \pm \varepsilon_0$ follows from continuity and piecewise affinity of $\varepsilon\mapsto L(\Phi, B,\varepsilon)$.
\medskip

\noindent \textbf{\boldmath Case 3: $\lambda_2 =1$.} Notice that $A(\cdot + i\varepsilon)$ is an analytic $\SL(2,\bbC)$ cocycle for all $\varepsilon \neq 0$.
Thus, the result for $A$ at nonzero $\varepsilon$ follows immediately from Theorem~\ref{t:avilaGlobal}. The extension to $B$ uses \eqref{eq:cocycle:LAtoLB} and properties of $L(\Phi,B,\varepsilon)$ as before.
\end{proof}

The central idea is now to analyze the behavior as $\varepsilon \to \infty$, then use quantization of acceleration, continuity, and convexity to bring this information to $\varepsilon$ near zero.
The key technical result is the following asymptotic calculation for the regularized cocycle.
\begin{prop} \label{prop:cocycle:LBeAsymptotic}
Fix $0<\lambda_1 \leq 1$,  $0 \leq \lambda_2 \leq 1$, $\Phi \in \bbT\setminus \bbQ$, and $z \in \partial \bbD$, and let $B_z = B_{\lambda_1,\lambda_2,z}$.
For all $|\varepsilon|$ sufficiently large,
\begin{equation} \label{eq:cocycle:LBepsAsymp}
L(\Phi,B_z,\varepsilon) = 2\pi|\varepsilon|+ \log \left[ \frac{\lambda_2(1+\lambda_1')}{\lambda_1(1+\lambda_2')}\right].
\end{equation}
Moreover, $L(\Phi,B,\varepsilon)$ is an even function of $\varepsilon \in \bbR$.
\end{prop}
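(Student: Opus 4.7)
My plan is to prove the proposition in three stages: (i) establish evenness via a direct symmetry argument, (ii) compute the limit of $L(\Phi, B_z, \varepsilon) - 2\pi\varepsilon$ as $\varepsilon \to +\infty$ using Herman's subharmonicity trick combined with upper semicontinuity, and (iii) upgrade this asymptotic limit to an exact identity via convexity and the integer quantization of the acceleration, transporting the conclusion to $\varepsilon \to -\infty$ through evenness.

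\emph{Evenness.} Direct inspection of \eqref{eq:cocycle:BzRegularizedDef} reveals that $B_z(\theta)$ is a symmetric matrix and depends on $\theta$ only through $\sin(2\pi\theta)$. Because $\sin(\pi - y) = \sin(y)$ for all $y \in \bbC$, this yields the identity $B_z(1/2 - \theta + i\varepsilon) = B_z(\theta - i\varepsilon)$. Applying it term-by-term in the iterate $B_z^{n,\Phi}(\theta - i\varepsilon) = \prod_{k=n-1}^0 B_z(k\Phi + \theta - i\varepsilon)$ and setting $\theta' = 1/2 - \theta - (n-1)\Phi$, the reindexed product rewrites as the forward product $\prod_{j=0}^{n-1} B_z(j\Phi + \theta' + i\varepsilon)$, which by symmetry of each factor equals $(B_z^{n,\Phi}(\theta' + i\varepsilon))^T$. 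Hence $\|B_z^{n,\Phi}(\theta - i\varepsilon)\| = \|B_z^{n,\Phi}(\theta' + i\varepsilon)\|$, and the measure-preserving change of variables $\theta \leftrightarrow \theta'$ in the defining integral gives $L(\Phi, B_z, -\varepsilon) = L(\Phi, B_z, \varepsilon)$.

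\emph{Asymptotic at $+\infty$.} I exploit the entire matrix polynomial $N_z(w) = w M_z(w)$ already introduced in the proof of Theorem~\ref{thm:posle}, whose value at $w=0$ is the matrix $M_+$ with $\log \spr(M_+) = \log\bigl[\lambda_2(1+\lambda_1')/(\lambda_1(1+\lambda_2'))\bigr] =: c$ by \eqref{eq:sprBlambda1}. Writing $\tilde N_z(\theta) := N_z(e^{2\pi i\theta})$, the factorization $B_z(\theta) = e^{-2\pi i \theta}\tilde N_z(\theta)$ iterates to $\|B_z^{n,\Phi}(\theta + i\varepsilon)\| = e^{2\pi n \varepsilon}\|\tilde N_z^{n,\Phi}(\theta + i\varepsilon)\|$ and hence $L(\Phi, B_z, \varepsilon) = 2\pi\varepsilon + L(\Phi, \tilde N_z, \varepsilon)$. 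The lower bound $L(\Phi, \tilde N_z, \varepsilon) \geq c$, valid for every $\varepsilon$, is obtained from subharmonicity of $\log\|N_z^n(w)\|$: the mean-value inequality over the circle $|w| = e^{-2\pi\varepsilon}$ gives $\frac{1}{n}\int_\bbT \log\|\tilde N_z^{n,\Phi}(\theta+i\varepsilon)\|\,d\theta \geq \frac{1}{n}\log\|M_+^n\|$, and Gelfand's formula sends the right-hand side to $c$ as $n\to\infty$. For the matching upper bound, subadditivity yields $L(\Phi, \tilde N_z, \varepsilon) \leq \frac{1}{n}\int_\bbT \log\|\tilde N_z^{n,\Phi}(\theta+i\varepsilon)\|\,d\theta$; the uniform convergence $\tilde N_z^{n,\Phi}(\cdot + i\varepsilon) \to M_+^n$ as $\varepsilon \to +\infty$ (valid for each fixed $n$ because $N_z$ is continuous and $e^{-2\pi\varepsilon} \to 0$) lets me send $\varepsilon \to \infty$ to obtain $\limsup_{\varepsilon \to \infty} L(\Phi, \tilde N_z, \varepsilon) \leq \frac{1}{n}\log\|M_+^n\|$, and then $n \to \infty$ closes the argument. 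Thus $L(\Phi, B_z, \varepsilon) - 2\pi\varepsilon \to c$ as $\varepsilon \to +\infty$.

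\emph{Upgrade and conclusion.} By Theorem~\ref{t:avilaGlobal}(a) together with the integrality \eqref{eq:cocycle:accelAB} of $\omega(B_z, \cdot)$, the function $f(\varepsilon) := L(\Phi, B_z, \varepsilon)$ is convex and piecewise affine with slopes in $2\pi\bbZ$. Since $f(\varepsilon) \sim 2\pi\varepsilon + c$ at $+\infty$ and slopes are monotone nondecreasing and quantized, the terminal slope at $+\infty$ must equal exactly $2\pi$; once the slope is $2\pi$ and $f$ is affine, the affine constant is pinned to $c$ by the asymptotic, giving $f(\varepsilon) = 2\pi\varepsilon + c$ for all sufficiently large $\varepsilon > 0$. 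Evenness extends this to $f(\varepsilon) = -2\pi\varepsilon + c = 2\pi|\varepsilon| + c$ for all sufficiently negative $\varepsilon$. The most delicate step is the upper-bound half of the asymptotic, since it requires exchanging $\lim_{\varepsilon\to\infty}$ with $\lim_{n\to\infty}$; this is handled cleanly by the subadditivity-Gelfand trick above, but without the integer quantization of acceleration one would only get the asymptote rather than exact equality for large $|\varepsilon|$.
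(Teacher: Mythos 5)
Your proof is correct, and while it follows the same overall skeleton as the paper's (compute the asymptotics of $L(\Phi,B_z,\varepsilon)-2\pi\varepsilon$ at infinity, then upgrade to exact equality for large $|\varepsilon|$ via convexity and quantization of the acceleration), two of your sub-steps are carried out by genuinely different means. For the asymptotic, the paper renormalizes to $\widetilde{B}^\pm_{z,\varepsilon}=e^{\mp2\pi\varepsilon}B_z(\cdot+i\varepsilon)$, observes uniform convergence to the degenerate cocycle $B_\infty$, and then invokes continuity of the Lyapunov exponent in the cocycle (Avila, Jitomirskaya--Marx) to conclude $L(\Phi,\widetilde B^\pm_{z,\varepsilon})\to\log\spr(B_\infty(0))$; you instead split the limit into a lower bound from Herman's subharmonicity trick (reusing the entire function $N_z$ and the mean-value inequality at $w=0$, exactly as in the proof of Theorem~\ref{thm:posle}) and an upper bound from subadditivity plus uniform convergence, which amounts to a self-contained proof of upper semicontinuity at the constant cocycle and so avoids citing the continuity theorem. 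For evenness, the paper deduces it a posteriori from convexity, quantization, and the symmetric asymptotics at $\pm\infty$, whereas you prove it directly from the reflection symmetry $\sin(2\pi(1/2-\theta+i\varepsilon))=\sin(2\pi(\theta-i\varepsilon))$ together with the symmetry of the matrix $B_z$, which turns the backward iterate at $\theta-i\varepsilon$ into the transpose of the forward iterate at a reflected phase; this is more elementary and independent of the asymptotic computation. Both routes are sound (and both share the same formal degeneracy at $\lambda_2=0$, where the constant is $-\infty$); your version trades one citation for a short explicit estimate and is, if anything, slightly more self-contained.
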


\begin{proof}
Define $\widetilde{B}_{z,\varepsilon}^\pm$ by
\begin{align*} 
\widetilde{B}_{z,\varepsilon}^\pm(\theta)
& = e^{\mp 2\pi \varepsilon} B_z(\theta + i\varepsilon) \\
& = 
\frac{2e^{\mp 2\pi \varepsilon}}{1+\lambda_2'}
\begin{bmatrix}\lambda_1^{-1}z^{-1}+2\lambda_1'\lambda_1^{-1}\lambda_2 \sin(2\pi (\theta + i\varepsilon)) + z{\lambda_1'}^2\lambda_1^{-1}	&	-\lambda_2 \sin(2\pi (\theta + i\varepsilon))-\lambda_1'z	\\	- \lambda_2 \sin(2\pi (\theta + i\varepsilon)) -\lambda_1'z	&	\lambda_1z	\end{bmatrix}. \end{align*}
Naturally, one has 
\[L(\Phi,\widetilde{B}_{z,\varepsilon}^\pm) 
= L(\Phi,B_z(\cdot + i\varepsilon))  \mp 2\pi \varepsilon\]
for any $\varepsilon$. Define
\[B_\infty(\theta) = \lim_{\varepsilon \to \pm \infty} \widetilde{B}_{z,\varepsilon}^\pm(\theta)
= \frac{2ie^{-2\pi i \theta}}{1+\lambda_2'}
 \begin{bmatrix}  \lambda_1'\lambda_1^{-1} \lambda_2   
 & -\lambda_2 /2 \\ 
 -\lambda_2 /2
 & 0
 \end{bmatrix}
 = \frac{i\lambda_2e^{-2\pi i \theta}}{\lambda_1(1+\lambda_2')} 
 \begin{bmatrix}  2\lambda_1'  
 & -\lambda_1  \\ 
 -\lambda_1 
 & 0
 \end{bmatrix}.\]

Notice that the convergence is uniform in $\theta$.
 Using \eqref{eq:sprBlambda1}, we can directly calculate the Lyapunov exponent of the cocycle $B_\infty$ via
 \begin{equation} \label{eq:cocycle:BinftLE}
 L(\Phi,B_\infty) = \log \mathrm{spr}(B_\infty(0))
 = \log\left[ \frac{\lambda_2(1+\lambda_1')}{\lambda_1(1+\lambda_2')} \right] .
 \end{equation}
By \eqref{eq:cocycle:BinftLE}, quantization of acceleration, convexity, and continuity of the Lyapunov exponent \cite{Avila2015Acta,JitoMarx2012CMP} we get
\begin{equation} \label{eq:cocyc:LBepsLargeAbsVal}
L(\Phi,B,\varepsilon)
 = \log\left[ \frac{\lambda_2(1+\lambda_1')}{\lambda_1(1+\lambda_2')} \right] + 2\pi |\varepsilon|
 \text{ for all } |\varepsilon| \text{ sufficiently large.}
 \end{equation}
By \eqref{eq:cocyc:LBepsLargeAbsVal}, convexity, and quantization of acceleration, $L(\Phi,B_z,\varepsilon)$ is even.
 \end{proof}

\newcolumntype{V}{>{\centering\arraybackslash} m{.4\linewidth} }

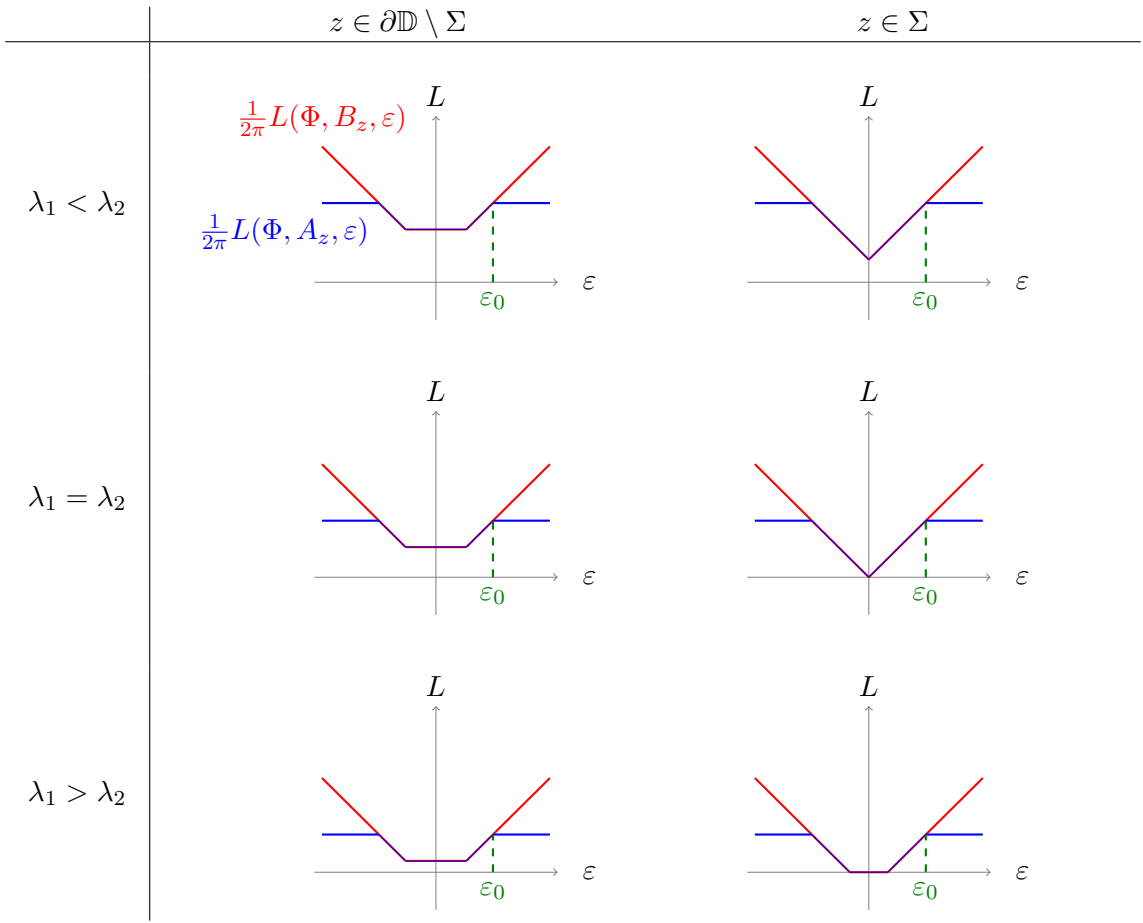
\begin{figure}
	\begin{center}
		\begin{tabular}{>{\centering\arraybackslash}m{.1\linewidth}|VV}
			& $z\in \partial\bbD\setminus \Sigma$ & $z\in\Sigma$ \\\hline
			\\
			$\lambda_1<\lambda_2$ & \begin{tikzpicture}[scale=1]
			\draw [help lines,->] (-1.6,0) -- (1.6,0);
			\draw [help lines,->] (0,-0.5) -- (0,2.2);
			\draw [thick,red,domain=-1.5:-0.4] plot (\x, {-1*\x+0.3});
			\draw [thick,red,domain=0.4:1.5] plot (\x, {1*\x+0.3});
			\draw [thick,red,domain=-0.4:0.4] plot (\x, {0.4+0*\x+0.3});
			\draw [thick,blue,domain=-1.5:-0.75] plot (\x, {0.75+0.3});
			\draw [blue,domain=-0.75:-0.4] plot (\x, {-\x+0.3});
			\draw [blue,domain=-0.4:0.4] plot (\x, {0.4+0.3});
			\draw [blue,domain=0.4:0.75] plot (\x, {\x+0.3});
			\draw [thick,blue,domain=0.75:1.5] plot (\x, {0.75+0.3});
			\draw [thick,dashed,green,domain=-0.3:0.75] plot ({0.75},{\x+0.3});
			\node [above] at (0,2.2) {$L$};
			\node [above] at (-1.5,1.8) {\red{$\frac1{2\pi} L(\Phi,B_z,\varepsilon)$}};
			\node [above] at (-2,0.3) {\blue{$\frac1{2\pi} L(\Phi,A_z,\varepsilon)$}};
			\node [right] at (1.8,0) {$\varepsilon$};
			\node [below] at (0.75,0) {\green{$\varepsilon_0$}};
			\end{tikzpicture} & 
			\begin{tikzpicture}[scale=1]
			\draw [help lines,->] (-1.6,0) -- (1.6,0);
			\draw [help lines,->] (0,-0.5) -- (0,2.2);
			\draw [thick,red,domain=-1.5:0] plot (\x, {-1*\x+0.3});
			\draw [thick,red,domain=0:1.5] plot (\x, {1*\x+0.3});
			\draw [thick,blue,domain=-1.5:-0.75] plot (\x, {0.75+0.3});
			\draw [thick,blue,domain=0.75:1.5] plot (\x, {0.75+0.3});
			\draw [blue,domain=-0.75:0] plot (\x, {-1*\x+0.3});
			\draw [blue,domain=0:0.75] plot (\x, {1*\x+0.3});
			\draw [thick,dashed,green,domain=0:1.05] plot ({0.75},\x);
			\node [above] at (0,2.2) {$L$};
			\node [below] at (0.75,0) {\green{$\varepsilon_0$}};
			\node [right] at (1.8,0) {$\varepsilon$};
			\end{tikzpicture}
			\\[2cm]
			$\lambda_1=\lambda_2$ & \begin{tikzpicture}[scale=1]
			\draw [help lines,->] (-1.6,0) -- (1.6,0);
			\draw [help lines,->] (0,-0.5) -- (0,2.2);
			\draw [thick,red,domain=-1.5:-0.4] plot (\x, {-1*\x});
			\draw [thick,red,domain=0.4:1.5] plot (\x, {1*\x});
			\draw [thick,red,domain=-0.4:0.4] plot (\x, {0.4+0*\x});
			\draw [thick,blue,domain=-1.5:-0.75] plot (\x, {0.75});
			\draw [blue,domain=-0.75:-0.4] plot (\x, {-\x});
			\draw [blue,domain=-0.4:0.4] plot (\x, {0.4});
			\draw [blue,domain=0.4:0.75] plot (\x, {\x});
			\draw [thick,blue,domain=0.75:1.5] plot (\x, {0.75});
			\draw [thick,dashed,green,domain=0:0.75] plot ({0.75},\x);
			\node [above] at (0,2.2) {$L$};
			\node [above] at (-2,0) {\hphantom{\blue{$\frac1{2\pi} L(\Phi,A_z,\varepsilon)$}}};
			\node [right] at (1.8,0) {$\varepsilon$};
			\node [below] at (0.75,0) {\green{$\varepsilon_0$}};
			\end{tikzpicture} & 
			\begin{tikzpicture}[scale=1]
			\draw [help lines,->] (-1.6,0) -- (1.6,0);
			\draw [help lines,->] (0,-0.5) -- (0,2.2);
			\draw [thick,red,domain=-1.5:0] plot (\x, {-1*\x});
			\draw [thick,red,domain=0:1.5] plot (\x, {1*\x});
			\draw [thick,blue,domain=-1.5:-0.75] plot (\x, {0.75});
			\draw [thick,blue,domain=0.75:1.5] plot (\x, {0.75});
			\draw [blue,domain=-0.75:0] plot (\x, {-1*\x});
			\draw [blue,domain=0:0.75] plot (\x, {1*\x});
			\draw [thick,dashed,green,domain=0:0.75] plot ({0.75},\x);
			\node [above] at (0,2.2) {$L$};
			\node [below] at (0.75,0) {\green{$\varepsilon_0$}};
			\node [right] at (1.8,0) {$\varepsilon$};
			\end{tikzpicture} \\[2cm]
			$\lambda_1>\lambda_2$ & 
			\begin{tikzpicture}[scale=1]
			\draw [help lines,->] (-1.6,0) -- (1.6,0);
			\draw [help lines,->] (0,-0.5) -- (0,2.2);
			\draw [thick,red,domain=-1.5:-0.4] plot (\x, {-1*\x-0.25});
			\draw [thick,red,domain=0.4:1.5] plot (\x, {1*\x-0.25});
			\draw [thick,red,domain=-0.4:0.4] plot (\x, {0.4+0*\x-0.25});
			\draw [thick,blue,domain=-1.5:-0.75] plot (\x, {0.75-0.25});
			\draw [blue,domain=-0.75:-0.4] plot (\x, {-\x-0.25});
			\draw [blue,domain=-0.4:0.4] plot (\x, {0.4-0.25});
			\draw [blue,domain=0.4:0.75] plot (\x, {\x-0.25});
			\draw [thick,blue,domain=0.75:1.5] plot (\x, {0.5});
			\draw [thick,dashed,green,domain=0:0.5] plot ({0.75},\x);
			\node [above] at (0,2.2) {$L$};
			\node [above] at (-2,0) {\hphantom{\blue{$\frac1{2\pi} L(\Phi,A_z,\varepsilon)$}}};
			\node [right] at (1.8,0) {$\varepsilon$};
			\node [below] at (0.75,0) {\green{$\varepsilon_0$}};
			\end{tikzpicture} & 
			\begin{tikzpicture}[scale=1]
			\draw [help lines,->] (-1.6,0) -- (1.6,0);
			\draw [help lines,->] (0,-0.5) -- (0,2.2);
			\draw [thick,red,domain=-1.5:-0.25] plot (\x, {-1*\x-0.25});
			\draw [thick,red,domain=0.25:1.5] plot (\x, {1*\x-0.25});
			\draw [thick,red,domain=-0.25:0.25] plot (\x, {0});
			\draw [thick,blue,domain=-1.5:-0.75] plot (\x, {0.75-0.25});
			\draw [blue,domain=-0.75:-0.25] plot (\x, {-\x-0.25});
			\draw [blue,domain=-0.25:0.25] plot (\x, {0});
			\draw [blue,domain=0.25:0.75] plot (\x, {\x-0.25});
			\draw [thick,blue,domain=0.75:1.5] plot (\x, {0.5});
			\draw [thick,dashed,green,domain=0:0.5] plot ({0.75},\x);
			\node [above] at (0,2.2) {$L$};
			\node [right] at (1.8,0) {$\varepsilon$};
			\node [below] at (0.75,0) {\green{$\varepsilon_0$}};
			\end{tikzpicture}
		\end{tabular}
	\end{center}
	\caption{Complexified Lyapunov exponents in the sub-critical, critical and super-critical regime.\label{f:complexifiedLE} }
\end{figure} 

As discussed in Section~\ref{sec:modelanddefs}, the remaining technical task is to show that $A_{\lambda_1,\lambda_2,z}/\sqrt{\det A_{\lambda_1,\lambda_2,z}}$ is conjugate to an $\SL(2,\bbR)$ cocycle, which is the content of Proposition~\ref{prop:conjtoSL2R}.

\begin{proof}[Proof of Proposition~\ref{prop:conjtoSL2R}]
Denoting
\[ X(\lambda) = \begin{bmatrix} \lambda & \lambda' \\ \lambda' & - \lambda  \end{bmatrix}, \]
one can easily check $X = X^* = X^{-1}$ and $\tr X = 0$. 
By a straightforward calculation, one may check that $\widetilde{A}_{\lambda_1,\lambda_2,z}(\theta) := A_{\lambda_1,\lambda_2,z}(\theta) / \sqrt{\det A_{\lambda_1,\lambda_2,z}(\theta)}$ belongs to  $\SU(1,1,X(\lambda_1))$, that is,
\begin{align*}
[\widetilde{A}_{\lambda_1,\lambda_2,z}(\theta)]^* X(\lambda_1)\widetilde{A}_{\lambda_1,\lambda_2,z}(\theta) = X(\lambda_1)
\end{align*}
for any $\theta \in \bbT$, $0<\lambda_1 \leq 1$, $0 \leq \lambda_2 < 1$, and $z \in \partial \bbD$.
Moreover, one can confirm that
\[ Y(\lambda)^* X(\lambda) Y(\lambda)
= \begin{bmatrix} 0 & i \\ -i & 0 \end{bmatrix} \] 
where $Y$ is as in \eqref{eq:selfdual:YlamDef}. In view of the discussion in \cite[Section~10.4]{Simon2005OPUC2}, this implies that $\widetilde{A}^\realified_{\lambda_1,\lambda_2,z}(\theta) \in \SL(2,\bbR)$.

The form of $A_{\lambda_1,\lambda_2,z}^\realified$ in \eqref{eq:realifiedcocycleexplicit} follows from direct calculations. As discussed previously, we may analytically choose the branch of the square root, so analyticity follows immediately. Alternatively, analyticity is obvious from \eqref{eq:realifiedcocycleexplicit}.
\end{proof}

At last, we may combine all of these tools and conclude with the proofs of the main statements regarding the cocycles.

\begin{proof}[Proof of Theorem~\ref{t:cocycleREG}]
Let $\lambda_2 <1$ be given. First, notice that the Lyapunov exponents of $A$, $B$, and $A^\realified$ all coincide in a strip, that is,
\begin{equation}
L(\Phi,B_z,\varepsilon) = L(\Phi,A_z,\varepsilon) = L(\Phi,A_z^\realified,\varepsilon), 
\quad |\varepsilon| \leq \varepsilon_0 = \frac{1}{2\pi} \mathrm{arcsinh}\sqrt{\lambda_2^{-2}-1}
\end{equation} which holds on account of \eqref{eq:AlambdaDet} and Lemma~\ref{lem:le:logIntegral}. Since $\lambda_2<1$, the strip has positive width. Thus, to conclude the desired statements about $(\Phi,A_{\lambda_1,\lambda_2,z}^\realified)$, it suffices to prove suitable statements about the shape of the graph of $L(\Phi,B_{\lambda_1,\lambda_2,z},\varepsilon)$ near $\varepsilon=0$.
From here, the proofs of the three statements are nearly identical, so we only write the details for the proof of part~\ref{t:cocycleSupercritical}, since it requires some additional work compared to the other two. 

In part ~\ref{t:cocycleSupercritical} $\lambda_1 < \lambda_2<1$, so that
\[ \log \left[ \frac{\lambda_2(1+\lambda_1')}{\lambda_1(1+\lambda_2')}\right] > 0. \]
Consider $z \in \partial \bbD$ and let $B = B_{\lambda_1,\lambda_2,z}$ be the associated cocycle map.
By Proposition~\ref{prop:cocycle:LBeAsymptotic}, $\omega(B,\varepsilon) = \mathrm{sgn}(\varepsilon)$ whenever $|\varepsilon|$ is very large.
By quantization of acceleration, convexity, and evenness of the function $\varepsilon \mapsto L(\Phi,B,\varepsilon)$, there are two possibilities.
\medskip

\noindent \textbf{Case 1: \boldmath $\omega(B,\varepsilon)=0$ for some $\varepsilon \in \bbR$.} In this case, $L(\Phi,B,\varepsilon)$ is positive and affine in a neighborhood of $\varepsilon = 0$, which implies that $(\Phi,B)$ enjoys a dominated splitting \cite{AJS2014JEMS}, and hence that $z \notin \Sigma$ by Theorem~\ref{thm:specviaDS}.
\medskip

\noindent \textbf{Case 2: \boldmath $\omega(B,\varepsilon) \neq 0$ for all $\varepsilon$.} By the previously discussed properties of $L$, we have
\begin{equation} \label{eq:cocycle:exactFormOfB}
L(\Phi,B,\varepsilon) = 2\pi|\varepsilon| + \log\left[ \frac{\lambda_2(1+\lambda_1')}{\lambda_1(1+\lambda_2')}\right], \quad \forall \varepsilon.\end{equation}
In view of \eqref{eq:cocycle:LAtoLB}, \eqref{eq:cocycle:exactFormOfB} yields
\[
L(\Phi,A,\varepsilon) = 2\pi|\varepsilon| + \log\left[ \frac{\lambda_2(1+\lambda_1')}{\lambda_1(1+\lambda_2')}\right], \quad  |\varepsilon| < \varepsilon_0.\]
In particular, this implies that $z$ belongs to $\Sigma$ and that $L(z) = L(\Phi,A,0) = \log\lambda_0$, as desired, where $\lambda_0$ is given by \eqref{eq:model:lambda0Def}.
\end{proof}

In thinking about the proof of Theorem~\ref{t:cocycleREG}, the reader may find it helpful to consult Figure~\ref{f:complexifiedLE}.

\begin{proof}[Proof of Theorem~\ref{t:cocycleLambda2=1}]
Part~\ref{t:cocyclelambda2=1lambda1=1} is proved in \cite[Section~5]{FOZ2017CMP}. Simply note that $-i B_{1,1,\Phi,\theta}(z) = N^z(\theta)$ in the notation of \cite{FOZ2017CMP}. Part~\ref{t:cocyclelambda2=1lambda1<1} follows directly from Proposition~\ref{prop:cocycle:LBeAsymptotic} using the same argument used to prove part~\ref{t:cocycleSupercritical} of Theorem~\ref{t:cocycleREG}.
\end{proof}

%!TEX root = CFO.tex

\section{Aubry--Andr\'{e} Duality} \label{sec:aubry} 

One crucial aspect of the model is that it enjoys a version of Aubry duality.
We will describe two formulations of duality, each of which expresses self-similarity of the operator families $\{W_{\lambda_1,\lambda_2,\Phi,\theta} : \theta \in \bbT\}$ and $\{W_{\lambda_1,\lambda_2,\Phi,\theta}^\aubrydual := W_{\lambda_2,\lambda_1,\Phi,\theta}^\top : \theta \in \bbT\}$, both of which are useful.
The first manifestation of duality shows how to relate solutions of the eigenvalue equation $W_{\lambda_1,\lambda_2,\Phi,\theta}\psi = z\psi$ to formal solutions of the dual equation $W_{\lambda_1,\lambda_2,\Phi,\theta}^\aubrydual \varphi = z\varphi$.
The second manifestation of duality expresses a unitary equivalence between the direct integrals of the families $W_{\lambda_1,\lambda_2,\Phi,\theta}$ and $W_{\lambda_1,\lambda_2,\Phi,\theta}^\aubrydual$ via the Fourier transform.

\subsection{Duality via Solutions}
For $f \in \ell^2(\bbZ)$, we denote its inverse Fourier transform by $\widecheck f$, which is defined for absolutely summable $f$ by
\[\widecheck{f}(x) = \sum_{n \in \bbZ}e^{2\pi i n x}f_n.\]
Given $\psi \in \ell^2(\bbZ) \otimes \bbC^2$, let us define (motivated by \eqref{eq:models:dualSolutionDef}):
\begin{align}\label{eq:phidef}
	\widecheck{\phi}^+ = \frac1{\sqrt2}(\widecheck\psi^+ + i\widecheck\psi^-), \quad \widecheck{\phi}^- = \frac1{\sqrt2}(i\widecheck\psi^+ + \widecheck\psi^-).
\end{align}

\begin{proof}[Proof of Theorem~\ref{t:aubryViaSolutions}]
From \eqref{eq:lemW+} and $W\psi = z\psi$, we have
	\begin{align*}
		z\psi_n^+ & = \lambda_1 \left(q_{n-1}^{11}\psi_{n-1}^+ + q_{n-1}^{12}\psi_{n-1}^-\right)
		-\lambda_1' \left(q_{n}^{21}\psi_{n}^+ + q_{n}^{22}\psi_{n}^-\right).
	\end{align*}
	Taking the inverse Fourier transform of both sides, shifting the indices of the terms containing $n-1$, and substituting the explicit form of $Q_n$ from \eqref{eq:moddefs:coinDef} yields
	\begin{align*}
		z\widecheck{\psi}^+(x) %& = \sum_{n\in\bbZ}e^{2\pi i n x} \left[ \lambda_1 \left(q_{n-1}^{11}\psi_{n-1}^+ + q_{n-1}^{12}\psi_{n-1}^-\right)
%		-\lambda_1' \left(q_{n}^{21}\psi_{n}^+ + q_{n}^{22}\psi_{n}^-\right) \right] \\[1mm]
		& = \sum_{n\in\bbZ}e^{2\pi i (n+1) x}  \lambda_1 \left( (\lambda_2 \cos(2\pi (n \Phi+\theta)) + i\lambda_2') \psi_{n}^+ -\lambda_2 \sin(2\pi (n \Phi+\theta)) \psi_{n}^-\right) \\[1mm]
		& \qquad - \sum_{n \in \bbZ} e^{2\pi i n x} \lambda_1' \left(\lambda_2 \sin(2\pi (n \Phi+\theta))\psi_{n}^+ +  (\lambda_2 \cos(2\pi (n \Phi+\theta)) -i\lambda_2')\psi_{n}^-\right)
	\end{align*}
	for a.e.\ $x \in \bbT$.
	Using the exponential formulations of sine and cosine, and expressing in terms of the inverse Fourier transform this becomes 
	\begin{align*}
		z\widecheck{\psi}^+(x) 
		& = \frac{e^{2\pi i x}}{2}\left(e^{2\pi i\theta}\lambda_1\lambda_2 \widecheck\psi^+(x+\Phi) + e^{-2\pi i\theta}\lambda_1\lambda_2 \widecheck\psi^+(x-\Phi) + 2i \lambda_1\lambda_2' \widecheck{\psi}^+(x) \right) \\[1mm]
		& \qquad + \frac{e^{2\pi i x}}{2} \left( i e^{2\pi i\theta}\lambda_1\lambda_2 \widecheck\psi^-(x+\Phi) - i e^{-2\pi i\theta}\lambda_1\lambda_2 \widecheck\psi^-(x-\Phi) \right) \\[1mm]
		& \qquad + \frac{1}{2}\left( ie^{2\pi i\theta}\lambda_1'\lambda_2 \widecheck\psi^+(x+\Phi) - ie^{-2\pi i\theta}\lambda_1'\lambda_2 \widecheck\psi^+(x-\Phi) \right) \\[1mm]
		& \qquad + \frac{1}{2}\left( - e^{2\pi i\theta}\lambda_1'\lambda_2 \widecheck\psi^-(x+\Phi) -e^{-2\pi i\theta}\lambda_1'\lambda_2 \widecheck\psi^-(x-\Phi) + 2i\lambda_1'\lambda_2'\widecheck\psi^-(x)\right)
	\end{align*}
	for a.e.\ $x$.
Rearranging to collect like terms yields
	\begin{align*}
		z\widecheck{\psi}^+(x) 
%		&= e^{2\pi i\theta}\left(\frac{e^{2\pi i x}}{2}\lambda_1\lambda_2 + \frac{1}{2}i\lambda_1'\lambda_2 \right)\widecheck\psi^+(x+ \Phi) \\[1mm]
%		& \qquad + ie^{2\pi i x}\lambda_1\lambda_2' \widecheck\psi^+(x) \\[1mm]
%		& \qquad + e^{-2\pi i\theta}\left( \frac{e^{2\pi i x}}{2} \lambda_1\lambda_2 - \frac{1}{2} i\lambda_1'\lambda_2 \right)\widecheck\psi^+(x-\Phi) \\[1mm]
%		&\qquad + e^{2\pi i\theta}\left(\frac{e^{2\pi i x}}{2}i\lambda_1\lambda_2 - \frac{1}{2}\lambda_1'\lambda_2 \right)\widecheck\psi^-(x+ \Phi) \\[1mm]
%		& \qquad + i\lambda_1'\lambda_2' \widecheck\psi^-(x) \\
%		& \qquad + e^{-2\pi i\theta}\left( -i\frac{e^{2\pi i x}}{2} \lambda_1\lambda_2 - \frac{1}{2} \lambda_1'\lambda_2 \right)\widecheck\psi^-(x-\Phi)	\\[2mm]
		&= e^{2\pi i\theta}\left(\frac{e^{2\pi i x}}{2}\lambda_1\lambda_2 + \frac{1}{2}i\lambda_1'\lambda_2 \right)\left(\widecheck\psi^+(x+ \Phi)+i\widecheck\psi^-(x + \Phi)\right) \\[1mm]
		& \qquad + ie^{2\pi i x}\lambda_1\lambda_2' \widecheck\psi^+(x) + i\lambda_1'\lambda_2' \widecheck\psi^-(x)\\[1mm]
		& \qquad + e^{-2\pi i\theta}\left( \frac{e^{2\pi i x}}{2} \lambda_1\lambda_2 - \frac{1}{2} i\lambda_1'\lambda_2 \right)\left(\widecheck\psi^+(x-\Phi)-i\widecheck\psi^-(x-\Phi)\right)
	\end{align*}
	for a.e. $x$.

Applying the same steps to \eqref{eq:lemW-} yields
	\begin{align*}
		z\widecheck{\psi}^-(x) 
		& = e^{2\pi i\theta}\left(\frac{e^{-2\pi i x}}{2}\lambda_1\lambda_2+\frac12i\lambda_1' \lambda_2\right)\left(-i\widecheck{\psi}^+(x+\Phi)+\widecheck{\psi}^-(x+\Phi)\right)	\\[1mm]
		& \qquad + i\lambda_1' \lambda_2'\widecheck{\psi}^+(x)- ie^{-2\pi i x}\lambda_1\lambda_2'\widecheck{\psi}^-(x)	\\[1mm]
		& \qquad +e^{-2\pi i\theta}\left(\frac{e^{-2\pi i x}}{2}\lambda_1\lambda_2-\frac12i\lambda_1' \lambda_2\right)\left(i\widecheck{\psi}^+(x-\Phi)+\widecheck{\psi}^-(x-\Phi)\right)
	\end{align*}
for a.e.\ $x$. 
From these expressions for $\widecheck\psi^+$ and $\widecheck\psi^-$, we obtain for $a,b\in\bbC$
	\begin{align*}
		z(a\widecheck\psi^+(x)+b\widecheck\psi^-(x))	%&=a\bigg(e^{2\pi i\theta}\left(\frac{e^{2\pi i x}}{2}\lambda_1\lambda_2 + \frac{1}{2}i\lambda_1'\lambda_2 \right)\left(\widecheck\psi^+(x+ \Phi)+i\widecheck\psi^-(x+ \Phi)\right) \\
%		& \qquad + ie^{2\pi i x}\lambda_1\lambda_2' \widecheck\psi^+(x) + i\lambda_1'\lambda_2' \widecheck\psi^-(x)\\
%		& \qquad + e^{-2\pi i\theta}\left( \frac{e^{2\pi i x}}{2} \lambda_1\lambda_2 - \frac{1}{2} i\lambda_1'\lambda_2 \right)\left(\widecheck\psi^+(x-\Phi)-i\widecheck\psi^-(x-\Phi)\right)\bigg)	\\
%		&\quad+b\bigg(e^{2\pi i\theta}\left(\frac{e^{-2\pi i x}}{2}\lambda_1\lambda_2+\frac12i\lambda_1' \lambda_2\right)\left(-i\widecheck{\psi}^+(x+\Phi)+\widecheck{\psi}^-(x+\Phi)\right)	\\
%		& \qquad + i\lambda_1' \lambda_2'\widecheck{\psi}^+(x)- ie^{-2\pi i x}\lambda_1\lambda_2'\widecheck{\psi}^-(x)	\\
%		& \qquad +e^{-2\pi i\theta}\left(\frac{e^{-2\pi i x}}{2}\lambda_1\lambda_2-\frac12i\lambda_1' \lambda_2\right)\left(i\widecheck{\psi}^+(x-\Phi)+\widecheck{\psi}^-(x-\Phi)\right)\bigg)	\\
		&=\frac{1}{2}e^{2\pi i\theta}i\lambda_1'\lambda_2(a-ib)\left(\widecheck\psi^+(x+ \Phi)+i\widecheck\psi^-(x+ \Phi)\right)		\\
		&\qquad+(-1)\frac{1}{2}e^{-2\pi i\theta}i\lambda_1'\lambda_2(-ia+b)\left(i\widecheck\psi^+(x- \Phi)+\widecheck\psi^-(x- \Phi)\right)		\\
		&\qquad+\frac12e^{2\pi i\theta}\lambda_1\lambda_2\left(ae^{2\pi i x}-ibe^{-2\pi i x}\right)\left(\widecheck\psi^+(x+ \Phi)+i\widecheck\psi^-(x+ \Phi)\right)	\\
		&\qquad+\frac12e^{-2\pi i\theta}\lambda_1\lambda_2\left(-iae^{2\pi i x}+be^{-2\pi i x}\right)\left(i\widecheck\psi^+(x- \Phi)+\widecheck\psi^-(x- \Phi)\right)	\\
		&\qquad+i\lambda_1\lambda_2'\left(ae^{2\pi ix}\widecheck\psi^+(x)-be^{-2\pi ix}\widecheck\psi^-(x)\right)+i\lambda_1'\lambda_2'\left(a\widecheck\psi^-(x)+b\widecheck\psi^+(x)\right)
	\end{align*}
	Writing $e^{\pm2\pi i x}=\cos(2\pi x)\pm i\sin(2\pi x)$ and sorting the terms, this amounts to
	\begin{align*}
		z(a\widecheck\psi^+(x)+b\widecheck\psi^-(x))%&=\frac{1}{2}e^{2\pi i\theta}i\lambda_1'\lambda_2(a-ib)\left(\widecheck\psi^+(x+ \Phi)+i\widecheck\psi^-(x+ \Phi)\right)		\\
%		&\qquad+\frac{1}{2}e^{-2\pi i\theta}i\lambda_1'\lambda_2(ia-b)\left(i\widecheck\psi^+(x- \Phi)+\widecheck\psi^-(x- \Phi)\right)		\\
%		&\qquad+\frac12e^{2\pi i\theta}\lambda_1\lambda_2\left(\cos(2\pi x)(a-ib)+\sin(2\pi x)(ia-b)\right)\left(\widecheck\psi^+(x+ \Phi)+i\widecheck\psi^-(x+ \Phi)\right)	\\
%		&\qquad+\frac12e^{-2\pi i\theta}\lambda_1\lambda_2\left(-\cos(2\pi x)(ia-b)+\sin(2\pi x)(a-ib)\right)\left(i\widecheck\psi^+(x- \Phi)+\widecheck\psi^-(x- \Phi)\right)	\\
%		&\qquad+i\lambda_1\lambda_2'\left(\cos(2\pi x)(a\psi^+(x)-b\psi^-(x))+i\sin(2\pi x)(a\psi^+(x)+b\psi^-(x))\right)\\
%		&\qquad+i\lambda_1'\lambda_2'\left(a\widecheck\psi^-(x)+b\widecheck\psi^+(x)\right)	\\
%		&=\frac{1}{2}e^{2\pi i\theta}(\lambda_1\cos(2\pi x)+i\lambda_1')\lambda_2(a-ib)\left(\widecheck\psi^+(x+ \Phi)+i\widecheck\psi^-(x+ \Phi)\right)		\\
%		&\qquad-\frac{1}{2}e^{-2\pi i\theta}(\lambda_1\cos(2\pi x)-i\lambda_1')\lambda_2(ia-b)\left(i\widecheck\psi^+(x- \Phi)+\widecheck\psi^-(x- \Phi)\right)		\\
%		&\qquad+\frac12e^{2\pi i\theta}\lambda_1\lambda_2\sin(2\pi x)(ia-b)\left(\widecheck\psi^+(x+ \Phi)+i\widecheck\psi^-(x+ \Phi)\right)	\\
%		&\qquad+\frac12e^{-2\pi i\theta}\lambda_1\lambda_2\sin(2\pi x)(a-ib)\left(i\widecheck\psi^+(x- \Phi)+\widecheck\psi^-(x- \Phi)\right)	\\
%		&\qquad+i\lambda_1\lambda_2'\left(\cos(2\pi x)(a\psi^+(x)-b\psi^-(x))+i\sin(2\pi x)(a\psi^+(x)+b\psi^-(x))\right)\\
%		&\qquad+i\lambda_1'\lambda_2'\left(a\widecheck\psi^-(x)+b\widecheck\psi^+(x)\right)	\\
		&=\frac{1}{2}e^{2\pi i\theta}(\lambda_1\cos(2\pi x)+i\lambda_1')\lambda_2(a-ib)\left(\widecheck\psi^+(x+ \Phi)+i\widecheck\psi^-(x+ \Phi)\right)		\\
		&\qquad-\frac{1}{2}e^{-2\pi i\theta}(\lambda_1\cos(2\pi x)-i\lambda_1')\lambda_2(ia-b)\left(i\widecheck\psi^+(x- \Phi)+\widecheck\psi^-(x- \Phi)\right)		\\
		&\qquad+\frac12e^{2\pi i\theta}\lambda_1\lambda_2\sin(2\pi x)(ia-b)\left(\widecheck\psi^+(x+ \Phi)+i\widecheck\psi^-(x+ \Phi)\right)	\\
		&\qquad+\frac12e^{-2\pi i\theta}\lambda_1\lambda_2\sin(2\pi x)(a-ib)\left(i\widecheck\psi^+(x- \Phi)+\widecheck\psi^-(x- \Phi)\right)	\\
		&\qquad+i\lambda_1\lambda_2'\left(\cos(2\pi x)(a\widecheck\psi^+(x)-b\widecheck\psi^-(x))+i\sin(2\pi x)(a\widecheck\psi^+(x)+b\widecheck\psi^-(x))\right)\\
		&\qquad+i\lambda_1'\lambda_2'\left(a\widecheck\psi^-(x)+b\widecheck\psi^+(x)\right)
	\end{align*}
	
	Thus, choosing $(a,b)=(1,i)$ and $(a,b)=(i,1)$, respectively, and setting $\widecheck\phi^+$ and $\widecheck\phi^-$ as in \eqref{eq:phidef} we obtain
	\begin{align}\label{eq:phiplus}
			z\widecheck\phi^+(x)	&= e^{2\pi i\theta}(\lambda_1\cos(2\pi x)+i\lambda_1')\lambda_2\widecheck\phi^+(x+\Phi)+e^{-2\pi i\theta}\sin(2\pi x)\lambda_1\lambda_2\widecheck\phi^-(x-\Phi)	\\
			&\qquad +(\lambda_1\cos(2\pi x)+i\lambda_1')\lambda_2'\widecheck\phi^-(x)-\lambda_1\lambda_2'\sin(2\pi x)\widecheck\phi^+(x),	\nonumber\\
			\label{eq:phiminus}
			z\widecheck\phi^-(x)	&=-e^{2\pi i\theta}\sin(2\pi x)\lambda_1\lambda_2\widecheck\phi^+(x+\Phi)+e^{-2\pi i\theta}(\lambda_1\cos(2\pi x)-i\lambda_1')\lambda_2\widecheck\phi^-(x-\Phi)	\\
			&\qquad -(\lambda_1\cos(2\pi x)-i\lambda_1')\lambda_2'\widecheck\phi^+(x)-\lambda_1\lambda_2'\sin(2\pi x)\widecheck\phi^-(x).	\nonumber
	\end{align}

	To conclude the proof, we apply $W_{\lambda_2,\lambda_1,\Phi,\xi}^\top$ to $\varphi_n^s=\varphi_{n,s}^\xi=e^{2\pi in\theta}\widecheck\phi^s(n\Phi+\xi)$ and write $y=y(n,\xi)=n\Phi+\xi$. Thus from  Lemma \ref{lem:wT} we have in coordinates:
	\begin{align*}
		[W_{\lambda_2,\lambda_1,\Phi,\xi}^\top\varphi]_n^+	&=q_n^{11}(\lambda_2\varphi_{n+1}^++\lambda_2'\varphi_n^-)+q_n^{21}(-\lambda_2'\varphi_n^++\lambda_2\varphi_{n-1}^-)\\
%		&=(\lambda_1\cos(2\pi (\Phi n+\xi))+i\lambda_1')(\lambda_2\varphi_{n+1}^++\lambda_2'\varphi_n^-)+\lambda_1\sin(2\pi(\Phi n+\xi))(-\lambda_2'\varphi_n^++\lambda_2\varphi_{n-1}^-)\\
%		&=(\lambda_1\cos(2\pi (\Phi n+\xi))+i\lambda_1')(\lambda_2e^{2\pi i (n+1)\theta}\widecheck\phi^+((n+1)\Phi+\xi)+\lambda_2'e^{2\pi i n\theta}\widecheck\phi^-(n\Phi+\xi))\\
%		&\qquad+\lambda_1\sin(2\pi(\Phi n+\xi))(-\lambda_2'e^{2\pi i n\theta}\widecheck\phi^+(n\Phi+\xi)+\lambda_2e^{2\pi i (n-1)\theta}\widecheck\phi^-((n-1)\Phi+\xi))\\
		&=e^{2\pi i n\theta}\big[(\lambda_1\cos(2\pi y)+i\lambda_1')(e^{2\pi i \theta}\lambda_2\phi^+(y+\Phi)+\lambda_2'\widecheck\phi^-(y))\\
		&\qquad+\lambda_1\sin(2\pi y)(-\lambda_2'\widecheck\phi^+(y)+e^{-2\pi i \theta}\lambda_2\widecheck\phi^-(y-\Phi))\big]\\
%		&=e^{2\pi i n\theta}\big[\cos(2\pi y)\lambda_1\lambda_2e^{2\pi i \theta}\widecheck\phi^+(y+\Phi)+i\lambda_1'\lambda_2e^{2\pi i \theta}\widecheck\phi^+(y+\Phi)+\cos(2\pi y)\lambda_1\lambda_2'\widecheck\phi^-(y)\\
%		&\qquad+i\lambda_1'\lambda_2'\phi^-(y)-\sin(2\pi y)\lambda_1\lambda_2'\widecheck\phi^+(y)+\sin(2\pi y)\lambda_1\lambda_2e^{-2\pi i \theta}\widecheck\phi^-(y-\Phi)\big]\\
		&=e^{2\pi i n\theta}z\widecheck\phi^+(y)	\\
		&=z\varphi_n^+,
	\end{align*}
	where we used \eqref{eq:phiplus}. An analogous calculation shows that $[W_{\lambda_2,\lambda_1,\Phi,\xi}^\top\varphi]_n^-=z\varphi_n^-$.
\end{proof}

\subsection{Duality via Direct Integrals}

Recalling $\scrH_1 = \ell^2(\bbZ)\otimes \bbC^2$, let us define
\[ \scrH_\oplus = \int_\bbT^\oplus \scrH_1 \, d\theta , \quad  \bm{W}_{\lambda_1,\lambda_2,\Phi} = \int_\bbT^\oplus W_{\lambda_1,\lambda_2,\Phi,\theta} \, d\theta. \]
We write the coordinates of $\Psi \in \scrH_\oplus$ as $\Psi_{n}^s(\theta)$ where $n \in \bbZ$, $s \in \{\pm\}$, and $\theta \in \bbT$.
For $\Psi \in \scrH_\oplus$, let $\widehat{\Psi}$ be its Fourier transform, given by
\[ \widehat{\Psi}_{n}^s(\theta) = \sum_{m \in \bbZ}\int_\bbT e^{-2\pi i m \theta}e^{-2\pi i nx} \Psi_{m}^s(x) \, dx, \quad n \in \bbZ, \ \theta \in \bbT, \ s \in \{\pm\}. \]
The Aubry dual operator is given by $\bm{\calA}  = \bm{\calA}_\Phi : \scrH_\oplus \to \scrH_\oplus$, where
\[ [\bm{\calA} \Psi]_{n}^s(\theta) = \widehat\Psi_{n}^s(n\Phi+\theta). \]
We also define $\bm{\calX} :\scrH_\oplus \to \scrH_\oplus$ by
\[ \bm{\calX} = \int_\bbT^\oplus \bigoplus_{n \in \bbZ} \frac{1}{\sqrt{2}} \begin{bmatrix}i & 1 \\ 1 & i  \end{bmatrix} \, dx. \]

\begin{theorem}[Aubry Duality for Operators] \label{t:aubry:abstract}
For all $\lambda_1,\lambda_2$, and irrational $\Phi$,
\begin{equation}
\bm{W}_{\lambda_1,\lambda_2,\Phi}\bm{\calA}_\Phi \bm{\calX} = \bm{\calA}_\Phi \bm{\calX} \bm{W}_{\lambda_2,\lambda_1,\Phi}^\top.
\end{equation}
\end{theorem}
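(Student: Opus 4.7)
The strategy is to reduce the operator identity to the termwise Fourier-type identity already extracted in the proof of Theorem~\ref{t:aubryViaSolutions}. Write $V := \bm{\calA}_\Phi \bm{\calX}$; the goal is to show $\bm{W}_{\lambda_1,\lambda_2,\Phi}\, V = V\, \bm{W}^\top_{\lambda_2,\lambda_1,\Phi}$.

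First, I would check that $V$ is unitary on $\scrH_\oplus$. This is immediate for $\bm{\calX}$, which is pointwise multiplication by a unitary matrix. For $\bm{\calA}_\Phi$, identify $\scrH_\oplus \cong \ell^2(\bbZ)\otimes\bbC^2\otimes L^2(\bbT)$ and factor $\bm{\calA}_\Phi$ as the Plancherel isomorphism (which exchanges the $\ell^2(\bbZ)$ and $L^2(\bbT)$ tensor factors) composed with the measure-preserving unitary twist $\Psi(n,s,\theta)\mapsto \Psi(n,s,n\Phi+\theta)$.

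Since both sides of the asserted identity are bounded operators, it suffices to verify the identity on a dense subspace. The natural choice is the set of pure tensors $\Psi_m^s(x) = \psi_m^s\, h(x)$ with $\psi\in\ell^1(\bbZ,\bbC^2)$ and $h$ a trigonometric polynomial on $\bbT$. For such $\Psi$, I would unfold $V\Psi$ explicitly in coordinates; the resulting expression mirrors formula \eqref{eq:models:dualSolutionDef} for the Aubry-dual solution, with the roles of the ``primal phase'' and the ``position variable'' interchanged. I would then apply $\bm{W}_{\lambda_1,\lambda_2,\Phi}$ to $V\Psi$ using Lemma~\ref{lem:w}, separately compute $V\bm{W}^\top_{\lambda_2,\lambda_1,\Phi}\Psi$ using Lemma~\ref{lem:wT}, and match the two. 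Crucially, the chain of Fourier manipulations that in the proof of Theorem~\ref{t:aubryViaSolutions} leads from \eqref{eq:lemW+}--\eqref{eq:lemW-} to \eqref{eq:phiplus}--\eqref{eq:phiminus} never uses the eigenvalue equation beyond treating $z$ as a common scalar on both sides; re-read as a pointwise-in-frequency identity of inverse Fourier transforms, these steps give exactly the required intertwining relation $\bm{W}V\Psi = V\bm{W}^\top\Psi$.

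The main obstacle will be bookkeeping: tracking two independent Fourier transforms (in $m\in\bbZ$ and in $x\in\bbT$), the asymmetric $n\Phi$-twist inside $\bm{\calA}_\Phi$, and the interaction of $\bm{\calX}$ with the matrix-valued coins $Q_{\lambda_2,\Phi,\theta,n}$ and their transposes. A pleasant feature, distinguishing this from the transfer matrix analysis of Proposition~\ref{prop:cocycle:tmMain}, is that the identity is purely operator-theoretic and imposes no non-degeneracy requirement on $\lambda_1,\lambda_2$ or on the coins, so a single verification covers all parameter values $\lambda_1,\lambda_2\in[0,1]$ uniformly.
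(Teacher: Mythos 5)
Your proposal is correct in outline, but it takes a genuinely different route from the paper. You verify the intertwining relation $\bm{W}_{\lambda_1,\lambda_2,\Phi}\,\bm{\calA}_\Phi\bm{\calX} = \bm{\calA}_\Phi\bm{\calX}\,\bm{W}_{\lambda_2,\lambda_1,\Phi}^\top$ head-on, on a dense set of pure tensors, by re-reading the Fourier computation from the proof of Theorem~\ref{t:aubryViaSolutions} as a pointwise identity with $\widecheck{[W\psi]}$ in place of $z\widecheck{\psi}$. That works — the solution-level manipulations indeed never use $W\psi=z\psi$ in an essential way, and boundedness of both sides plus unitarity of $\bm{\calA}_\Phi\bm{\calX}$ justifies checking on a dense subspace — but you should be explicit that the final step of that earlier proof (where $z\widecheck\phi^{\pm}(y)$ is substituted via \eqref{eq:phiplus}--\eqref{eq:phiminus}) must also be rewritten without the eigenvalue equation, and that the $e^{2\pi i n\theta}$ prefactor of \eqref{eq:models:dualSolutionDef} is absorbed into the $e^{-2\pi i m\theta}$ factor of the double Fourier transform defining $\bm{\calA}_\Phi$. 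The paper instead argues structurally: motivated by the CGMV $\mathcal{LM}$-factorization, it writes $\bm{W}_{\lambda_1,\lambda_2,\Phi}=\bm{J}_{\lambda_1}\bm{U}_{\lambda_2,\Phi}$ for explicit half-step factors, shows that conjugation by $\bm{\calX}$ turns $\bm{U}_{\lambda,\Phi}$ into a diagonal-phase operator $\bm{V}_{\lambda,\Phi}$, and that conjugation by $\bm{\calA}_\Phi$ exchanges $\bm{J}_\lambda$ with $\bm{V}_{\lambda,\Phi}^\top$ (and $\bm{V}_{\lambda,\Phi}$ with $\bm{J}_\lambda$); the theorem then follows by chaining four short exchange identities. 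The paper's factorization makes the appearance of the transpose and the swap $\lambda_1\leftrightarrow\lambda_2$ structurally transparent and keeps each computation to a few lines, at the cost of introducing auxiliary operators; your approach avoids the auxiliary operators but front-loads all the difficulty into one long trigonometric/Fourier verification that you have not actually carried out, so as written it is a plan rather than a proof.
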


\begin{proof}
Recall that every one-dimensional quantum walk can be related by an extended CMV matrix via the CGMV connection and that one can recover the $\mathcal{LM}$ factorization of the associated CMV matrix by inserting an infinite direct sum of $\sigma_1$'s, viz.: $W_\lambda = S_\lambda Q = (S_\lambda \bm{\sigma}_1 )(\bm{\sigma}_1 Q)$, where $\bm{\sigma_1}$ denotes an infinite direct sum of copies of $\sigma_1$. This motivates one to define operators $J=J_{\lambda}$, $U = U_{\lambda,\Phi,\theta}$, and $V = V_{\lambda,\Phi,\theta}$ on $\scrH_1 = \ell^2(\bbZ) \otimes \bbC^2$ by
\begin{align*}
[J_\lambda \varphi]_n^+ 
&= -\lambda'\varphi_n^+ + \lambda \varphi_{n-1}^- \\[1mm]
[J_\lambda \varphi]_n^- 
&= \lambda \varphi_{n+1}^+ + \lambda' \varphi_{n}^- \\[1mm]
[U \varphi]_n^+ 
&= (\lambda \sin(2\pi(n\Phi+\theta))) \varphi_n^+ + (\lambda \cos(2\pi(n\Phi+\theta)) - i\lambda') \varphi_{n}^- \\[1mm]
[U \varphi]_n^- 
&= (\lambda\cos(2\pi(n\Phi+\theta)) + i\lambda')\varphi_{n}^+ - \lambda\sin(2\pi(n\Phi+\theta))\varphi_{n}^- \\[1mm]
[V \varphi]_n^+ 
&= -\lambda' \varphi_n^+ + \lambda e^{-2\pi i (n\Phi+\theta)} \varphi_{n}^- \\[1mm]
[V \varphi]_n^- 
&=  \lambda e^{2\pi i (n\Phi+\theta)} \varphi_{n}^+ + \lambda' \varphi_{n}^-,
\end{align*}
and denote the corresponding direct integrals by
\[\bm{J}_{\lambda} = \int_\bbT^\oplus J_{\lambda} \,d\theta, \quad \bm{U}_{\lambda, \Phi} = \int_\bbT^\oplus U_{\lambda, \Phi,\theta} \,d\theta,
\quad \bm{V}_{\lambda, \Phi} = \int_\bbT^\oplus V_{\lambda, \Phi,\theta} \,d\theta.\]
One deduces 
\begin{equation} \label{eq:aubry:W=JU}
\bm{W}_{\lambda_1, \lambda_2, \Phi} = \bm{J}_{\lambda_1} \bm{U}_{\lambda_2,\Phi}
\end{equation} 
from the pointwise identity $W_{\lambda_1,\lambda_2,\Phi,\theta} = J_{\lambda_1} U_{\lambda_2,\Phi,\theta}$.
From the calculation
\begin{align*}
& \ \frac{1}{2} \begin{bmatrix} -i & 1 \\ 1 & -i \end{bmatrix}
\begin{bmatrix} \lambda \sin(2\pi(n\Phi+\theta)) 
&\lambda \cos(2\pi(n\Phi+\theta)) - i\lambda' \\
\lambda\cos(2\pi(n\Phi+\theta)) + i\lambda'
& - \lambda\sin(2\pi(n\Phi+\theta)) \end{bmatrix}
\begin{bmatrix} i & 1 \\ 1 & i \end{bmatrix} \\[1mm]
% = & \ \frac{1}{2} \begin{bmatrix} -i & 1 \\ 1 & -i \end{bmatrix} \begin{bmatrix} \lambda e^{2\pi i (n\Phi + \theta)} -i\lambda'  & i\lambda e^{-2\pi i (n\Phi+\theta)} + \lambda' \\ i\lambda e^{2\pi i (n\Phi + \theta)} - \lambda' & \lambda e^{-2\pi i (n\Phi+\theta)} + i\lambda' \end{bmatrix} \\[1mm]
= & \begin{bmatrix} -\lambda'
& \lambda e^{-2\pi i (n\Phi+\theta)} \\
\lambda e^{2\pi i (n\Phi + \theta)} 
& \lambda' \end{bmatrix},
\end{align*}
we get 
\begin{equation} \label{eq:aubry:UtoV}
\bm{\calX}^*\bm{U}_{\lambda,\Phi} \bm{\calX} = \bm{V}_{\lambda_,\Phi}.
\end{equation}
Taking the transpose of both sides of \eqref{eq:aubry:UtoV} establishes 
\begin{equation} \label{eq:aubry:UTtoVT}
\bm{\calX}^*\bm{V}^\top_{\lambda,\Phi} \bm{\calX} = \bm{U}^\top_{\lambda,\Phi}.
\end{equation}
Next, we show 
\begin{equation} \label{eq:aubry:JtoVT} \bm{\calA}_\Phi^* \bm{J}_{\lambda}\bm{\calA}_\Phi = \bm{V}_{\lambda,\Phi}^\top.
\end{equation} 
Indeed, from the definitions,
\begin{align*}
[\bm{J}_\lambda\bm{\calA}_\Phi\Psi]_{n}^+(\theta)
& = -\lambda' [\bm\calA_\Phi\Psi]_{n}^+(\theta) + \lambda [\bm\calA_\Phi \Psi]_{n-1}^-(\theta) \\[1mm]
& = -\lambda' \sum_{m \in \bbZ} \int_\bbT     e^{-2\pi i m(n\Phi+\theta)}e^{-2\pi i nx} \Psi_{m}^+(x) \, dx \\[1mm]
& \qquad + \lambda \sum_{m \in \bbZ} \int_\bbT  e^{-2\pi i m((n-1)\Phi+\theta)} e^{-2\pi i (n-1)x} \Psi_{m}^-(x)  \, dx \\[1mm]
% & = -\lambda' \sum_{m \in \bbZ} \int_\bbT     e^{-2\pi i m(n\Phi+\theta)}e^{-2\pi i nx} \Psi_{m}^+(x) \, dx \\[1mm] & \qquad + \lambda \sum_{m \in \bbZ} \int_\bbT  e^{-2\pi i m(n\Phi+\theta)} e^{-2\pi i nx}e^{2\pi i (m\Phi+x)} \Psi_{m}^-(x)  \, dx \\[1mm] & = \sum_{m \in \bbZ} \int_\bbT e^{-2\pi i(n\Phi+\theta)} e^{-2\pi i nx} [\bm{V}^\top_{\lambda}\Psi]_{m}^+(x) \, dx \\[1mm]
& = [\bm{\calA}_\Phi\bm{V}^\top_{\lambda,\Phi}]_{n}^{+}(\theta).
\end{align*}
A similar argument works for the spin-down component, proving \eqref{eq:aubry:JtoVT}
Finally, we show
\begin{equation} \label{eq:aubry:VtoJ} \bm{\calA}_\Phi^* \bm{V}_{\lambda,\Phi}\bm{\calA}_\Phi = \bm{J}_{\lambda}.
\end{equation}
Calculating $\bm{V}_{\lambda,\Phi}\bm{\calA}_\Phi$ gives
\begin{align*}
[\bm{V}_{\lambda,\Phi}\bm{\calA}_\Phi\Psi]_{n}^+
& = -\lambda' [\bm{\calA}_\Phi\Psi]_{n}^+ + \lambda e^{-2\pi i (n\Phi+\theta)} [\bm{\calA}_\Phi\Psi]_{n}^{-} \\[1mm]
& = \sum_{m\in\bbZ} \int_\bbT e^{-2\pi i m(n\Phi+\theta)}e^{-2\pi i n x}\left( -\lambda' \Psi_{m}^+(x) + \lambda e^{-2\pi i(n\Phi+\theta)}\Psi_{m}^-(x) \right) \, dx \\[1mm]
& = \sum_{m\in\bbZ} \int_\bbT e^{-2\pi i m(n\Phi+\theta)}e^{-2\pi i n x}\left( -\lambda' \Psi_{m}^+(x) + \lambda \Psi_{m-1}^-(x) \right) \, dx \\[1mm]
& = [\bm{\calA}_\Phi \bm{J}_\lambda \Psi]_{n}^{+}(\theta).
\end{align*}
The other component of \eqref{eq:aubry:VtoJ} is similar. 
Putting together \eqref{eq:aubry:W=JU}, \eqref{eq:aubry:UtoV}, \eqref{eq:aubry:UTtoVT}, and \eqref{eq:aubry:JtoVT}, \eqref{eq:aubry:VtoJ} (and using $\bm{\calA}_\Phi\bm{\calX} = \bm{\calX} \bm{\calA}_\Phi$), we get
\begin{align*}
 \bm{\calX}^* \bm{\calA}_\Phi^* \bm{W}_{\lambda_1,\lambda_2,\Phi}\bm{\calA}_\Phi \bm{\calX}
& =   \bm{\calX}^* \bm{\calA}_\Phi^*  \bm{J}_{\lambda_1} \bm{\calA}_\Phi \bm{\calX}  \bm{\calA}_\Phi^* \bm{\calX}^*  \bm{U}_{\lambda_2,\Phi} \bm{\calX} \bm{\calA}_\Phi \\[1mm]
& =   \bm{\calX}^*   \bm{V}_{\lambda_1,\Phi}^\top  \bm{\calX}  \bm{\calA}_\Phi^*  \bm{V}_{\lambda_2,\Phi}  \bm{\calA}_\Phi \\[1mm]
& = \bm{U}_{\lambda_1,\Phi}^\top \bm{J}_{\lambda_2} \\
& = \bm{W}_{\lambda_2,\lambda_1,\Phi}^\top,
\end{align*}
since $\bm{J}_\lambda^\top = \bm{J}_\lambda$. \end{proof}

\begin{proof}[Proof of Theorem~\ref{t:aubryOperator}]
Since $\bm{\calA}_\Phi$ and $\bm{\calX}$ are unitary, this is an immediate consequence of Theorem~\ref{t:aubry:abstract}.
\end{proof}

%!TEX root = CFO.tex

\section{Continuous Spectrum}
\label{sec:contspec}

Having studied the Lyapunov exponent and cocycle dynamics in detail, we now move towards the analysis of the spectrum and spectral type of $W_{\lambda_1,\lambda_2,\Phi,\theta}$.
We begin in the current section with results on continuous spectrum; equivalently, we prove several results that establish the absence of eigenvalues, which is equivalent to continuity of spectral measures.
We present three results, each of which covers a particular paramter region. We begin with the sharp Gordon argument showing that for suitable Liouville numbers (dependent on the coupling via the Lyapunov exponent), the spectrum is purely continuous.
Coupled with positivity of the Lyapunov exponent in the supercritical region, this immediately yields singular continuous spectrum.
Next, we show continuous spectrum in the self-dual region for all irrational $\Phi$ and all but countably many $\theta$.
Later on, we will show that the spectrum has zero Lebesgue measure in the self-dual region so this shall again yield purely singular continuous spectrum for each irrational frequency and all but countably many phases.
Finally, we show that this (i.e., purely continuous spectrum) holds uniformly in the frequency and phase in the subcritical region.

\subsection{Liouville Fields: Sharp Gordon Criterion}

We begin with Theorem~\ref{t:maintype}.\ref{t:maintype:gordon}. This is a combination of two facts: absence of absolutely continuous spectrum, which follows from positivity of the Lyapunov exponent and Kotani theory and absence of eigenvalues, which follows from Gordon-type arguments. By applying Gordon's lemma for CMV matrices \cite{F2017PAMS}, one can immediately see that the spectral type is purely continuous whenever $\Phi$ is Liouville.
In fact, by using the sharp Gordon criterion as in \cite{AJ2009Ann, AviYouZho2017DMJ, JitoKociIMRN, JitoLiu2017CPAM, JitoLiu2018Annals}, one can prove singular continuous spectrum for all $\Phi$ above a suitable arithmetic threshold dictated by the Lyapunov exponent.

\begin{definition}
Given $\Phi \in \bbT$ irrational, let $p_k/q_k$ denote the continued fraction convergents of $\Phi$ and
\begin{equation} \label{eq:betaPhidef}
\beta(\Phi)= \limsup_{k\to\infty} \frac{\log q_{k+1}}{q_k}.
\end{equation}
\end{definition}

\begin{theorem} \label{t:AYZ}
If $\lambda_1 < \lambda_2$ and
\begin{equation} \beta(\Phi) > \log\left[ \frac{\lambda_2(1+\lambda_1')}{\lambda_1(1+\lambda_2')} \right] \end{equation}
then $W_{\lambda_1,\lambda_2,\Phi,\theta}$ has purely singular continuous spectrum for every $\theta \in \bbT$.
\end{theorem}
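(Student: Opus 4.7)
The plan is to combine two standard ingredients: absence of absolutely continuous spectrum, coming from the uniform positivity of the Lyapunov exponent on the spectrum together with unitary Kotani theory, and absence of eigenvalues, coming from a sharp Gordon-type argument in the spirit of Avila--You--Zhou \cite{AviYouZho2017DMJ}.

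For the first part, $\lambda_1 < \lambda_2$ places us in the supercritical regime, so Corollary~\ref{coro:lyapExact} gives
\begin{equation*}
L_{\lambda_1,\lambda_2,\Phi}(z) = L_0 := \log\!\left[\frac{\lambda_2(1+\lambda_1')}{\lambda_1(1+\lambda_2')}\right] > 0 \qquad \forall z \in \Sigma_{\lambda_1,\lambda_2,\Phi}.
\end{equation*}
The unitary/CMV analogue of Kotani theory (as used in the split-step walk setting in \cite{FOZ2017CMP}), combined with minimality of the hull under the irrational rotation $\theta \mapsto \theta + \Phi$, then yields $\Sigma_{\mathrm{ac}}(W_{\lambda_1,\lambda_2,\Phi,\theta}) = \varnothing$ for every $\theta \in \bbT$.

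For the second part, fix $z \in \Sigma_{\lambda_1,\lambda_2,\Phi}$ and assume for contradiction that $W_{\lambda_1,\lambda_2,\Phi,\theta}\psi = z\psi$ for some $\psi \in \scrH_1 \setminus \{0\}$. Proposition~\ref{prop:cocycle:tmMain} translates this to $v_n := [\psi_{n+1}^+, \psi_n^-]^\top = A_z^{n,\Phi}(\theta) v_0$ with $\|v_n\| \to 0$ as $|n| \to \infty$. I aim to derive a Gordon-type inequality
\begin{equation*}
\max_{j \in \{1,2,3\}} \|A_z^{jq_k,\Phi}(\theta) v_0\| \geq c\|v_0\|
\end{equation*}
along the convergents $p_k/q_k$ of $\Phi$, for infinitely many $k$ and some universal $c > 0$; this evidently contradicts the decay of $v_n$. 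On the periodic walk with frequency $p_k/q_k$ the inequality follows from Cayley--Hamilton applied to the one-period transfer matrix, after normalizing by the unimodular scalar from \eqref{eq:AlambdaDet}. Transporting the bound to the quasiperiodic cocycle rests on a telescoping estimate of the form
\begin{equation*}
\bigl\| A_z^{j,\Phi}(\theta) - \widetilde{A}_z^{j,p_k/q_k}(\theta)\bigr\| \leq C\, j \,\|q_k\Phi\|_\bbT \sup_{i \leq j}\|A_z^{i,\Phi}(\theta)\|
\end{equation*}
for $j \leq 3q_k$, where $\widetilde{A}$ denotes the cocycle built from the rational frequency and $\|\cdot\|_\bbT$ is distance to the nearest integer. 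The arithmetic hypothesis $\beta(\Phi) > L_0$ furnishes a subsequence with $\|q_k\Phi\|_\bbT \leq e^{-(L_0+\varepsilon)q_k}$ for some $\varepsilon > 0$, which beats the Lyapunov upper growth rate and forces the approximation error to zero.

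The principal obstacle is establishing the uniform-in-$\theta$ upper bound $\|A_z^{j,\Phi}(\theta)\| \leq e^{(L_0+\varepsilon')j}$ at the Lyapunov rate for large $j$ and arbitrarily small $\varepsilon' > 0$; this is the input that makes the AYZ Gordon argument sharp. For real-analytic quasiperiodic cocycles this should follow from Avila's quantitative global theory (Theorem~\ref{t:avilaGlobal}) in combination with the supercriticality established in Theorem~\ref{t:cocycleREG}, possibly after passing to the entire regularized cocycle $B_z$ (whose Lyapunov exponent coincides with that of $A_z$) and transferring back through the explicit scalar prefactor. A secondary technical point is that the AYZ argument is framed for self-adjoint Schr\"odinger cocycles whereas $A_z$ is only $\GL(2,\bbC)$-valued, but the fact that $|\det A_z| \equiv 1$ on $\bbT$ and the availability of the entire cocycle $B_z$ reduce this adaptation to bookkeeping.
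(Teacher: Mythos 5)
The first half of your proposal (positive Lyapunov exponent on the spectrum, unitary Kotani theory, then minimality plus the Last--Simon argument to upgrade ``a.e.\ $\theta$'' to ``every $\theta$'') is exactly what the paper does. For the second half the paper does not rederive the sharp Gordon criterion: it invokes the CMV adaptation of Avila--You--Zhou worked out in \cite[Appendix~C]{LiDamZhou2021Preprint}, noting only that the argument extends verbatim to the generalized CMV matrices arising here. You instead attempt to sketch the argument itself, and the sketch has a genuine quantitative gap.

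The problem is in the Gordon step. Your telescoping bound, even granting the uniform estimate $\|A_z^{j,\Phi}(\theta)\|\leq e^{(L_0+\varepsilon')j}$, does not close under the hypothesis $\beta(\Phi)>L_0$. Telescoping the difference $A_z^{j,\Phi}-\widetilde A_z^{\,j,p_k/q_k}$ produces a sum of terms each of which is a product of a prefix of one cocycle and a suffix of the other, so in operator norm the error is of size $j\,\|q_k\Phi\|_\bbT\, e^{2(L_0+\varepsilon')j}$, and over the three periods $j\leq 3q_k$ needed for Cayley--Hamilton this costs $e^{6(L_0+\varepsilon')q_k}$ against a gain of only $e^{-\beta q_k}$; the condition $\beta>L_0$ is far from sufficient, and the naive argument only yields the Liouville ($\beta=\infty$) or $\beta>CL_0$ versions. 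The sharpness in \cite{AviYouZho2017DMJ} comes from a different mechanism: one compares only \emph{traces} of the one-period transfer matrices (an error involving a single period, hence $q_k\|q_k\Phi\|_\bbT e^{(L_0+\varepsilon')q_k}$, which $\beta>L_0$ does beat) and exploits that the putative eigenfunction makes $\|A_z^{n,\Phi}(\theta)v_0\|$ bounded, so that vector estimates replace operator-norm estimates on one side. Relatedly, you misidentify the ``principal obstacle'': the uniform upper bound at the Lyapunov rate is standard for continuous cocycles over uniquely ergodic dynamics (uniform upper semicontinuity \`a la Furman, or continuity of $L$ from global theory) and is not where the difficulty lies. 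As written, your argument proves a strictly weaker statement than the theorem; to repair it you would need to reproduce the AYZ trace-based refinement (or, as the paper does, cite its CMV incarnation).
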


\begin{proof}
Suppose $\lambda_1<\lambda_2$.
By Theorem~\ref{t:cocycleREG}.\ref{t:cocycleSupercritical}, the Lyapunov exponent is uniformly positive and given by
\begin{equation}
\label{eq:contspec:gordonLEexact} L_{\lambda_1,\lambda_2,\Phi}(z) = \log\left[\frac{\lambda_2(1+\lambda_1')}{\lambda_1(1+\lambda_2')}\right] \end{equation}
for $z \in \Sigma_{\lambda_1,\lambda_2,\Phi}$.
By Kotani theory, the a.c.\ spectrum of $W_{\lambda_1,\lambda_2,\Phi,\theta}$ is absent for a.e.\ $\theta$. More precisely, recalling the gauge equivalence of $W_{\lambda_1,\lambda_2,\Phi, \theta}$ and $\calE^\realified_{\lambda_1,\lambda_2,\Phi, \theta}$ as in Corollary~\ref{coro:UAMOtoCMV} and the equivalence between the Szeg\H{o} cocycle and the cocycle $A_z$ in Corollary~\ref{coro:UAMOtoSzego}, the absence of a.c.\ spectrum follows from \cite[Theorem~10.11.1]{Simon2005OPUC2}.
By minimality then, the absolutely continuous spectrum is empty for all $\theta \in \bbT$ by the Last--Simon theorem for CMV matrices; cf.\ \cite[Theorem~10.9.11]{Simon2005OPUC2}.

On the other hand, using the exact form of the Lyapunov exponent from \eqref{eq:contspec:gordonLEexact} and the sharp Gordon criterion, which was worked out for CMV matrices in Li--Damanik--Zhou (see \cite[Appendix~C]{LiDamZhou2021Preprint}), one sees that every spectral measure is continuous. More precisely, the desired absence of eigenvalues may be deduced from the argument of \cite{LiDamZhou2021Preprint} after taking two things into account. First, use the gauge equivalence of $W_{\lambda_1,\lambda_2,\Phi,\theta}$ and $\calE_{\lambda_1,\lambda_2,\Phi,\theta}^\realified$ as in Corollary~\ref{coro:UAMOtoCMV} to equate purely continuous spectrum for $W_{\lambda_1,\lambda_2,\Phi,\theta}$ with that of $\calE_{\lambda_1,\lambda_2,\Phi,\theta}^\realified$. Second, apply a nearly-verbatim repetition of the arguments of \cite{LiDamZhou2021Preprint} for $\calE_{\lambda_1,\lambda_2,\Phi,\theta}^\realified$ by passing to blocks of length two; that is, work with the two-step Szeg\H{o} cocycle as in \eqref{eq:szegococycledef}, which is genuinely quasiperiodic (and which belongs to $\SU(1,1)$ \cite[Section~10.4]{Simon2005OPUC2}), enabling the application of their techniques.
\end{proof}

\begin{proof}[Proof of Theorem~\ref{t:maintype}.\ref{t:maintype:gordon}]
Since the set of $\Phi$ with $\beta(\Phi) = \infty$ is known to be residual, this follows immediately from Theorem~\ref{t:AYZ}.\end{proof}

\subsection{Continuous Spectrum in the Critical Regime}

We now discuss the critical case. Later in the manuscript, we will show that when $\lambda_1 = \lambda_2 \in (0,1]$, the spectrum is a Cantor set of zero Lebesgue measure and hence cannot support any absolutely continuous spectrum. To classify the spectral type, it then remains to see whether there may be any eigenvalues. We will see that in the critical case $0<\lambda_1=\lambda_2 < 1$, the point spectrum is empty away from a countable set of phases, and the set of exceptional phases is an explicit subset defined by possible reflection symmetries of the coins:

\begin{definition}
If $\Phi \in \bbT$ is irrational, we say that $\phi \in \bbT$ is \emph{irrational with respect to} $\Phi$ if $2\phi+k\Phi \notin \bbZ$ every $k \in \bbZ$. 
Otherwise, we say $\phi$ is \emph{rational} with respect to $\Phi$.
\end{definition}

Our primary goal in the present section is to prove the following theorem. The proof follows the overall structure of \cite{avila2008point} (see also \cite{AviJitoMarx2017Invent}), but there are several complications arising from the more involved structure of the cocycle. Additionally, the reflection symmetry of the cocycle as in Proposition~\ref{prop:cocycRefl} is different from that in the self-adjoint case and required novel analysis to find.

\begin{theorem}\label{t:contspec:selfdual}
Consider $0 < \lambda  < 1$. 
For every irrational $\Phi$ and every $\theta$ for which $\theta-1/4$ is irrational with respect to $\Phi$, the operator $W_{\lambda,\lambda,\Phi,\theta}$ has empty point spectrum.
\end{theorem}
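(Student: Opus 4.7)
The plan is to adapt Avila's argument for the critical almost-Mathieu operator \cite{avila2008point} to the present unitary, self-dual setting. The assumption $\lambda_1=\lambda_2=\lambda$ forces the Aubry dual family to be the transpose of the same family, $W^\aubrydual_{\lambda,\lambda,\Phi,\xi}=W^\top_{\lambda,\lambda,\Phi,\xi}$, which is the crucial structural input. Suppose for contradiction that $z\in\partial\bbD$ is an eigenvalue of $W_{\lambda,\lambda,\Phi,\theta}$ with eigenfunction $\psi\in\scrH_1\setminus\{0\}$; the goal is to manufacture a second, reflected bounded solution to the dual equation at a generic phase and rule out both the linearly independent and the linearly dependent cases.

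The first step is to upgrade the decay of $\psi$ from $\ell^2$ to $\ell^1(\bbZ,\bbC^2)$. By Theorem~\ref{t:cocycleREG}.\ref{t:cocycleCritical} the cocycle $(\Phi,B_{\lambda,\lambda,z})$ is critical on the spectrum with acceleration $\omega=1$, and the proof of that theorem yields the exact form $L(\Phi,B_z,\varepsilon)=2\pi|\varepsilon|$ near $\varepsilon=0$. Combined with quantization of acceleration (Theorem~\ref{t:avilaGlobal}), these give a sharp subexponential bound on the growth of transfer matrices of the dual cocycle --- the unitary analogue of the Avila--Jitomirskaya polynomial decay estimate for eigenfunctions of critical AMO --- which is enough to conclude $\psi\in\ell^1$. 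With this improved regularity, Theorem~\ref{t:aubryViaSolutions} gives, for \emph{every} $\xi\in\bbT$, a continuous (and in fact real-analytic in $\xi$) family $\varphi^\xi$ of bounded solutions of $W^\top_{\lambda,\lambda,\Phi,\xi}\varphi^\xi=z\varphi^\xi$, given explicitly by \eqref{eq:models:dualSolutionDef}.

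The second step invokes the reflection symmetry from Proposition~\ref{prop:cocycRefl}. A direct computation using $\cos(-x)=\cos x$ and $\sin(-x)=-\sin x$ gives $Q_{\lambda,\Phi,-\theta,n}^\top=Q_{\lambda,\Phi,\theta,-n}$, and a compatible identity for $S_\lambda$ holds after conjugation by an appropriate spin-flip reflection. This yields a unitary intertwining of $W^\top_{\lambda,\lambda,\Phi,\xi}$ with a phase-shifted version of $W^\top_{\lambda,\lambda,\Phi,c-\xi}$ for an explicit $c\in\tfrac{1}{2}+\Phi\bbZ$. Applying this intertwining to $\varphi^\xi$ manufactures a second analytic family $\widetilde\varphi^\xi$ of bounded solutions at phase $c-\xi$. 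The reflection-fixed phases are precisely the $\xi$ with $2\xi\in c+\bbZ+\Phi\bbZ$, i.e., those for which $\xi-1/4$ is rational with respect to $\Phi$; the hypothesis excludes exactly these.

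The final step, which I expect to be the main obstacle, is to combine the two families to produce a contradiction. If $\varphi^\xi$ and $\widetilde\varphi^\xi$ are linearly independent at some $\xi$, then every solution of $W^\top_{\lambda,\lambda,\Phi,\xi}\varphi=z\varphi$ is bounded, which is incompatible with the dual cocycle being critical on the spectrum: acceleration $\omega=1$ precludes uniform boundedness of all solutions (this can be seen, e.g., via Kotani-theoretic reasoning combined with the fact that critical cocycles are not reducible to constants). Otherwise $\varphi^\xi$ and $\widetilde\varphi^\xi$ are collinear at every $\xi$, producing a functional symmetry of the form
\[
\begin{bmatrix}\widecheck{\psi}^+(c-x)\\ \widecheck{\psi}^-(c-x)\end{bmatrix}=\gamma(x)\begin{bmatrix} 0 & 1\\ 1 & 0\end{bmatrix}\begin{bmatrix}\widecheck{\psi}^+(x)\\ \widecheck{\psi}^-(x)\end{bmatrix},\qquad x\in\bbT,
\]
for a continuous unimodular function $\gamma$. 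Evaluating along the orbit $\{n\Phi+\theta\}_{n\in\bbZ}$ and using that the hypothesis places $\theta$ and $c-\theta$ in distinct orbits of $\xi\mapsto\xi+\Phi$, joint equidistribution together with continuity of $\widecheck\psi$ forces $\widecheck\psi\equiv 0$, hence $\psi=0$, the desired contradiction. The technical heart is the $\ell^1$ upgrade in Step~1, since vanishing of the Lyapunov exponent rules out exponential decay outright and one must extract the bound from the finer datum $\omega=1$ through the quantitative theory of critical one-frequency cocycles.
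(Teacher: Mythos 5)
Your overall strategy --- dualize the eigenfunction, use the reflection symmetry of Proposition~\ref{prop:cocycRefl} to produce a second bounded solution, and split into a ``linearly independent'' and a ``collinear'' case --- matches the skeleton of the paper's argument, which follows \cite{avila2008point}. But each of your three steps has a gap, and the actual engine of the proof is missing.

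First, the $\ell^2\Rightarrow\ell^1$ upgrade in your Step~1 is unjustified. Criticality with $\omega=1$ controls the growth of \emph{transfer matrices}; it gives no pointwise decay estimate on a putative eigenfunction (the Lyapunov exponent vanishes, so there is no mechanism forcing decay beyond $\ell^2$). The paper never needs this: it works throughout with the a.e.-defined $L^2$ object $\widecheck\phi$, packages $\widecheck\phi(x)$ and its reflection $\widecheck\phi(1/2-x)$ into a matrix $M\in L^2(\bbT,\bbC^{2\times2})$ satisfying $A^\aubrydual_z(x)M(x)=M(x+\Phi)\,\mathrm{diag}(e^{2\pi i\theta},-e^{-2\pi i\theta})$, and uses only ergodicity of $|\det M|$ and Cauchy--Schwarz. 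Second, your resolution of the collinear case is not a proof: a relation $\widecheck\psi(c-x)=\gamma(x)\sigma_1\widecheck\psi(x)$ does not force $\widecheck\psi\equiv0$ by equidistribution --- plenty of nonzero functions have such symmetries. The actual mechanism is that collinearity of the columns of $M$ forces the proportionality function to satisfy the cohomological equation $\ell(x+\Phi)=-e^{-4\pi i\theta}\ell(x)$, and the hypothesis that $\theta-1/4$ is irrational with respect to $\Phi$ (a condition on $\theta$, not on the dual phase $\xi$ as you assert) guarantees $-e^{-4\pi i\theta}\neq e^{2\pi ik\Phi}$ for all $k$, killing every Fourier mode of $\ell$. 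Third, in the linearly independent case you assert that boundedness of all solutions contradicts criticality; making this rigorous is precisely the hard part, and with only an $L^2$ (not continuous) conjugacy one cannot simply say the cocycle is ``reducible, hence subcritical.'' The paper instead runs Avila's trace argument: $\liminf_n\|\tr A^{\aubrydual n}-(e^{2\pi in\theta}+(-1)^ne^{-2\pi in\theta})\|_{L^1}=0$ along a subsequence with $n\Phi$ near $\bbZ$, while $\mathcal{Q}_1^{(n)}\tr A^{\aubrydual n}$ is a degree-$n$ trigonometric polynomial whose normalized leading Fourier coefficient is bounded below by $1-o(1)$ (computed from the matrix $B(\lambda)$ with spectral radius $(1+\lambda')/2$), a contradiction. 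This quantitative step is the heart of the proof and is absent from your proposal.
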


\begin{remark}
The theorem is clearly false if $\lambda =0$. When $\lambda = 1$, it holds in a more restricted form (namely, for all irrational $\Phi$ and a.e.\ $\theta$) \cite{FOZ2017CMP}.
Naturally, for a given irrational $\Phi$, there are only countably many $\theta$ that are rational with respect to $\Phi$, so this suffices to establish the continuity half of Theorem~\ref{t:maintype}.\ref{t:maintype:critical}.
\end{remark}

For the work ahead, we need to note a reflection symmetry of the cocycles.

\begin{prop} \label{prop:cocycRefl}
Let $z$, $\lambda_1$, and $\lambda_2$ be given, let $A_z = A_{\lambda_1,\lambda_2,z}$ denote the cocycle defined in \eqref{eq:AlambdaCocycleDef}, and let $A_z^\aubrydual$ denote the dual cocycle from \eqref{eq:models:Asharpdef}. For all $\theta \in \bbT$, one has
\begin{align}\label{eq:pi/2inverse}
R^{-1}A_z(\theta)^{-1}R 
& = -A_z\left(\frac{1}{2} - \theta \right) \\
\label{eq:pi/2inverseDual}
R^{-1}A_z^\aubrydual(\theta)^{-1}R 
& = -A_z^\aubrydual\left(\frac{1}{2} - \theta \right), 
\end{align}
where
\begin{equation}
R=
\begin{bmatrix} 0 & 1 \\ -1 & 0 \end{bmatrix}.
\end{equation}
\end{prop}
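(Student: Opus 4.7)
The plan is to verify the identities by direct computation, leveraging two structural features of the cocycle map that make the calculation clean.

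First, I would record two elementary observations: (i) a reflection identity for the trigonometric functions, namely $\cos(2\pi(\tfrac12-\theta))=-\cos(2\pi\theta)$ and $\sin(2\pi(\tfrac12-\theta))=\sin(2\pi\theta)$, so that the matrix entries on the right-hand side of \eqref{eq:pi/2inverse} are obtained from those of $A_z(\theta)$ simply by flipping the sign of the $\cos(2\pi\theta)$ term in the prefactor; and (ii) that $A_z(\theta)$ is \emph{symmetric} as a matrix, since the two off-diagonal entries in \eqref{eq:AlambdaCocycleDef} are both equal to $-\lambda_2\sintwopi(\theta)-\lambda_1'z$. These together with the determinant formula \eqref{eq:AlambdaDet} are the entire content of the proof.

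Second, I would prove the general algebraic fact that for any $M=\begin{bmatrix} a&b\\ c&d\end{bmatrix}$, a direct $2\times 2$ computation gives
\[
R^{-1} M R = \begin{bmatrix} d & -c \\ -b & a \end{bmatrix},
\]
and hence applying this to $M^{-1}=\tfrac{1}{\det M}\begin{bmatrix} d & -b\\ -c&a\end{bmatrix}$ yields $R^{-1}M^{-1}R=\tfrac{1}{\det M}M^\top$. Specializing to $M=A_z(\theta)$ and invoking symmetry gives
\[
R^{-1}A_z(\theta)^{-1}R = \frac{1}{\det A_z(\theta)}\,A_z(\theta).
\]
Now the determinant formula \eqref{eq:AlambdaDet} tells us that $\tfrac{1}{\det A_z(\theta)}$ multiplies the prefactor $\tfrac{1}{\lambda_2\costwopi(\theta)-i\lambda_2'}$ precisely into $\tfrac{1}{\lambda_2\costwopi(\theta)+i\lambda_2'}$, which by observation (i) is exactly $-1$ times the prefactor of $A_z(\tfrac12-\theta)$. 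Since the remaining matrix entries are unchanged (they depend on $\sintwopi(\theta)$ only), this yields \eqref{eq:pi/2inverse} on the nose.

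For \eqref{eq:pi/2inverseDual}, I would not repeat the calculation. Instead, I would observe that since $R$ has real entries, complex conjugation commutes with the operation $M\mapsto R^{-1}MR$, and that the relation \eqref{eq:models:Asharpdef} gives $A_z^\aubrydual(\theta)=\overline{A_{\lambda_2,\lambda_1,1/\bar z}(\theta)}$. Applying \eqref{eq:pi/2inverse} to the cocycle $A_{\lambda_2,\lambda_1,1/\bar z}$ (with the roles of $\lambda_1,\lambda_2$ swapped, which is cosmetic since the same argument applies) and then taking entrywise complex conjugates delivers \eqref{eq:pi/2inverseDual}. There is no real obstacle here; the only thing to be careful about is tracking the prefactor signs under $\theta\mapsto\tfrac12-\theta$, which is where the minus sign on the right-hand side originates.
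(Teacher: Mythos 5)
Your proof is correct and follows essentially the same route as the paper's: a direct verification using the reflection identities $\costwopi(\tfrac12-\theta)=-\costwopi(\theta)$, $\sintwopi(\tfrac12-\theta)=\sintwopi(\theta)$ together with the explicit determinant \eqref{eq:AlambdaDet}. The only (cosmetic) difference is that you package the computation through the general identity $R^{-1}M^{-1}R=M^\top/\det M$ and the symmetry of $A_z(\theta)$, whereas the paper writes out $A_z(\theta)^{-1}$ and conjugates it by $R$ explicitly; both treat \eqref{eq:pi/2inverseDual} as an immediate consequence of \eqref{eq:pi/2inverse} and \eqref{eq:models:Asharpdef}.
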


\begin{proof}
Note that the inverse of $A_{z}(\theta)$ is given by
\begin{equation}
A_z(\theta)^{-1}
= \frac{1}{\lambda_2 \costwopi(\theta) + i\lambda_2'}
\begin{bmatrix} 	\lambda_1z	&	\lambda_2 \sintwopi(\theta) +\lambda_1'z	\\	 \lambda_2 \sintwopi(\theta) + \lambda_1'z	
& \lambda_1^{-1}z^{-1}+2\lambda_1'\lambda_1^{-1}\lambda_2 \sintwopi(\theta) + z{\lambda_1'}^2\lambda_1^{-1}	\end{bmatrix}
\end{equation}
One has
	\begin{align}
	\nonumber
	R^{-1}A_z(\theta)^{-1}R
	& =  \begin{bmatrix} 0 & 1 \\ -1 & 0 \end{bmatrix} A_z(\theta)^{-1} \begin{bmatrix} 0 & -1 \\ 1 & 0 \end{bmatrix}\\
	\nonumber
	& = \frac{1}{\lambda_2 \costwopi(\theta) + i\lambda_2'}
	\begin{bmatrix} 	\lambda_1^{-1}z^{-1}+2\lambda_1'\lambda_1^{-1}\lambda_2 \sintwopi(\theta) + z{\lambda_1'}^2\lambda_1^{-1}	
	&	-(\lambda_2 \sintwopi(\theta) +\lambda_1'z)	\\	 
	-(\lambda_2 \sintwopi(\theta) + \lambda_1'z)	
	& \lambda_1z 	\end{bmatrix}
	\end{align}
	so, exploiting $\sintwopi(1/2-\theta)= \sintwopi(\theta)$ and $\costwopi(1/2-\theta) = -\costwopi(\theta)$ give us
	\begin{align*}
	& \ R^{-1}A_z(\theta)^{-1}R \\
	 = &\ \frac{-1}{\lambda_2 \costwopi(1/2-\theta) - i\lambda_2'}
	\begin{bmatrix} 	\lambda_1^{-1}z^{-1}+2\lambda_1'\lambda_1^{-1}\lambda_2 \sintwopi(1/2-\theta) + z{\lambda_1'}^2\lambda_1^{-1}	
	&	-(\lambda_2 \sintwopi(1/2-\theta) +\lambda_1'z)	\\	 
	-(\lambda_2 \sintwopi(1/2-\theta) + \lambda_1'z)	
	& \lambda_1z 	\end{bmatrix}	\\
	= & \ -A_z\left(\frac{1}{2}-\theta\right),
	\end{align*}
	proving \eqref{eq:pi/2inverse}. The identity \eqref{eq:pi/2inverseDual} follows immediately from \eqref{eq:pi/2inverse} and \eqref{eq:models:Asharpdef}. \end{proof}

Crucial in the proof of Theorem~\ref{t:contspec:selfdual} is a dynamical reformulation of duality, which we presently make precise. If $\psi$ is an $\ell^2$-eigenfunction of $W_{\lambda_1,\lambda_2,\Phi,\theta}$, then Theorem~\ref{t:aubryViaSolutions} implies that $\exp(2\pi i \theta n) \widecheck{\phi}(n\Phi+\xi)$ is a solution of $W^\aubrydual_{\lambda_1,\lambda_2,\Phi,\xi} \phi=z\phi$ for a.e.\ $\xi$ (with $\widecheck{\phi}$ defined by \eqref{eq:phidef}). Consequently, we have
\begin{align}
\begin{bmatrix}e^{2\pi i\theta}\widecheck\phi^+(x+\Phi)\\\widecheck\phi^-(x)\end{bmatrix}
\label{eq:contspec:dualTM1}
	= & \ A_{\lambda_1,\lambda_2,z}^\aubrydual(x)\begin{bmatrix}\widecheck\phi^+(x)\\e^{-2\pi i\theta}\widecheck\phi^-(x-\Phi)\end{bmatrix}
\end{align}
for a.e.\ $x$, with $A^\aubrydual$ defined in \eqref{eq:models:Asharpdef}. 
Now, substitute $\hat{x} = 1/2-x$ for $x$ and apply \eqref{eq:pi/2inverse} to get
\[ \begin{bmatrix}
e^{2\pi i\theta}\widecheck\phi^+(1/2-x+\Phi)\\
\widecheck\phi^-(1/2-x)\end{bmatrix}
= -R^{-1}A_{\lambda_1,\lambda_2,z}^\aubrydual(x)^{-1} R\begin{bmatrix}
	\widecheck\phi^+(1/2-x)\\
	e^{-2\pi i\theta}\widecheck\phi^-(1/2-x-\Phi)\end{bmatrix}. 
\]
Use \eqref{eq:AlambdaDet} and the definition of $R$ to see
\begin{equation} \label{eq:contspec:dualTM2b}
-A_{\lambda_1,\lambda_2,z}^\aubrydual(x)
\begin{bmatrix}
-\widecheck\phi^-(1/2-x) \\
 e^{2\pi i\theta}\widecheck\phi^+(1/2-(x-\Phi))
\end{bmatrix}
=  e^{-2\pi i\theta}\begin{bmatrix}
	-\widecheck\phi^-(1/2-(x+\Phi))\\
	e^{2\pi i\theta}\widecheck\phi^+(1/2-x)\end{bmatrix}
\end{equation}
Putting together \eqref{eq:contspec:dualTM1} and \eqref{eq:contspec:dualTM2b}, we get
\begin{equation}\label{eq:reducibility}
A_{\lambda_1,\lambda_2,z}^\aubrydual(x) M(x) = M(x+\Phi) \begin{bmatrix}e^{2\pi i \theta} \\ & -e^{-2\pi i \theta} \end{bmatrix},\end{equation}
where
\begin{equation}\label{eq:Mreducible}
M(x)= \begin{bmatrix}\widecheck\phi^+(x) & -\widecheck\phi^-(1/2-x)\\
e^{-2\pi i\theta}\widecheck\phi^-(x-\Phi)
& e^{2\pi i\theta}\widecheck\phi^+(1/2-(x-\Phi))\end{bmatrix}.
\end{equation}

\begin{proof}[Proof of Theorem~\ref{t:contspec:selfdual}]
Let $\lambda_1=\lambda_2 = \lambda \in (0,1)$, $\Phi$ irrational, and $\theta$ irrational with respect to $\Phi$ be given, and assume on the contrary that $W_{\lambda_1,\lambda_2,\Phi,\theta}$ has nonempty point spectrum. Let $\psi$ denote a normalized eigenfunction with corresponding eigenvalue $z$.
Define $\widecheck \phi$ by \eqref{eq:phidef} and consider $M$ as in \eqref{eq:Mreducible}. 
Since $|\det(A_{\lambda_1,\lambda_2,z}^\aubrydual(x))|=1$, \eqref{eq:reducibility} implies that $|\det(M(x))|=|\det(M(x+\Phi))|$. By ergodicity of $x\mapsto x+\Phi$ on $\bbT$, $|\det(M(x))|$ is a.e.\ constant with respect to $x$. 

Also $\|M(x)\|>0$ for a.e.\ $x$, since if not, $\|M(x)\|=0$ for a.e. $x$ by \eqref{eq:reducibility} and ergodicity of $x\mapsto x+\Phi$. However, this would imply $\widecheck \phi_\pm(x)$ vanishes for a.e.\ $x$, which contradicts nontriviality of the eigenfunction $\psi$ that generates $\widecheck{\phi}$.

Next we prove that $\det M(x)\neq 0$ for a.e. $x$. 
Assume for the sake of contradiction that $\det M = 0$ on a positive-measure set. Since $|\det M|$ is a.e.\ constant, one has $\det M = 0$ a.e, which means that the columns of $M(x)$ are linearly dependent for a.e.\ $x$. 
Equivalently, there is a function $\ell:\bbT \to \bbC$ with $\ell(x)\neq 0$ such that
\begin{equation}\label{eq:ell}
\begin{bmatrix}\widecheck\phi^+(x) \\
e^{-2\pi i\theta}\widecheck\phi^-(x-\Phi)
\end{bmatrix}=\ell(x)
\begin{bmatrix}-\widecheck\phi^-(1/2-x)\\
e^{2\pi i\theta}\widecheck\phi^+(1/2-(x-\Phi))\end{bmatrix}.
\end{equation}

Using \eqref{eq:ell}, \eqref{eq:reducibility}, \eqref{eq:ell}, and again \eqref{eq:reducibility} we have
\begin{align}
\ell(x+\Phi)
\begin{bmatrix}-\widecheck\phi^-(1/2-(x+\Phi))\\
e^{2\pi i\theta}\widecheck\phi^+(1/2-x)\end{bmatrix}
=&  \  \begin{bmatrix}\widecheck\phi^+(x+\Phi) \\
e^{-2\pi i\theta}\widecheck\phi^-(x)
\end{bmatrix} \nonumber\\
=&  \  e^{-2\pi i\theta}A_{\lambda_1,\lambda_2,z}^\aubrydual(x) \begin{bmatrix}\widecheck\phi^+(x) \\
e^{-2\pi i\theta}\widecheck\phi^-(x-\Phi)
\end{bmatrix} \nonumber\\
= &  \  e^{-2\pi i\theta}A_{\lambda_1,\lambda_2,z}^\aubrydual(x)\ell(x)
\begin{bmatrix}-\widecheck\phi^-(1/2-x)\\
e^{2\pi i\theta}\widecheck\phi^+(1/2-(x-\Phi))\end{bmatrix} \nonumber\\
=& \ e^{-2\pi i\theta}\ell(x)
(-e^{-2\pi i\theta})
\begin{bmatrix}-\widecheck\phi^-(1/2-(x+\Phi))\\
e^{2\pi i\theta}\widecheck\phi^+(1/2-x)\end{bmatrix} \nonumber\\
=& \ -e^{-4\pi i\theta}\ell(x)
\begin{bmatrix}-\widecheck\phi^-(1/2-(x+\Phi))\\
e^{2\pi i\theta}\widecheck\phi^+(1/2-x)\end{bmatrix},
\end{align}
which implies $-e^{-4\pi i\theta}\ell(x)=\ell(x+\Phi)$ for a.e.\ $x$. 
Fourier expanding $\ell(x)=\sum \hat\ell_k e^{2\pi ikx}$ this gives
\begin{equation} \label{eq:contspec:ellFourierResult}
-e^{-4\pi i\theta}\sum \hat\ell_k e^{2\pi ikx}
=\sum \hat\ell_k e^{2\pi ik \Phi}e^{2\pi ikx}.
\end{equation}
Since $\theta-1/4$ is irrational with respect to $\Phi$, we have $-e^{-4\pi i \theta} \neq e^{2\pi i k \Phi}$ for all $k \in \bbZ$. Combined with \eqref{eq:contspec:ellFourierResult}, this implies that all Fourier coefficients of $\ell$ must vanish, forcing $\ell \equiv 0$, a contradiction.

Let us now consider for $n\geq 1$ the $n$-step transfer matrix given by 
\begin{equation}
A^{\aubrydual n}(x)=A_{\lambda_1,\lambda_2,z}^\aubrydual(x+(n-1)\Phi)A_{\lambda_1,\lambda_2,z}^\aubrydual(x+(n-2)\Phi)\ldots A_{\lambda_1,\lambda_2,z}^\aubrydual(x+\Phi) A_{\lambda_1,\lambda_2,z}^\aubrydual(x), 
\end{equation}
where we choose notation consistent with \eqref{eq:cociterates}.
Then, by \eqref{eq:reducibility}
\begin{equation}\label{eq:AAubryDualn}
A^{\aubrydual n}(x)=M(x+n\Phi)
		\begin{bmatrix}
			e^{2\pi i\theta} &0\\
			0& -e^{-2\pi i\theta} 
		\end{bmatrix}^n  M(x)^{-1}.
\end{equation}

We define 
\begin{align}\Psi^{(n)}(x)
& = \tr A^{\aubrydual n}(x) - \tr \left( 
\begin{bmatrix}
e^{2\pi i\theta} &0\\
0& -e^{-2\pi i\theta} 
\end{bmatrix}^n \right)\label{eq:Psi^n} \\
\label{eq:Psi^nb}
& = \tr A^{\aubrydual n}(x) -(e^{2\pi i n \theta} + (-1)^n e^{-2\pi i n \theta}),
\end{align}
and note that
\begin{align}
\Psi^{(n)}(x) 
= \tr\left(\Big( M(x+n\Phi)-M(x) \Big)
\begin{bmatrix}
e^{2\pi i\theta} &0\\
0& -e^{-2\pi i\theta} 
\end{bmatrix}^n  M(x)^{-1}\right).
\end{align}
Since we have the inequality $|\tr(\mathfrak A)| \leq 2\|\mathfrak A\|$ and the identity $\|\mathfrak A^{-1}\| = \|\mathfrak A\|/|\det(\mathfrak A)|$, we have
\begin{equation}
|\Psi^{(n)}(x)|\leq \frac{2}{|\det M|}\Vert M(x+n\Phi)-M(x)\Vert \Vert M(x)\Vert.
\end{equation}
By the Cauchy-Schwarz inequality,
\begin{equation}
\Vert\Psi^{(n)}(x)\Vert_{L^1}\leq \frac{2}{|\det M|}\Vert M(x+n\Phi)-M(x)\Vert_{L^2} \Vert M(x)\Vert_{L^2}.
\end{equation}
Note that $\Vert M(x)\Vert _{L^2}$ is finite by unitarity of the inverse Fourier transform. This also implies that $\Vert\Psi^{(n)}\Vert_{L^1}$  can be made arbitrarily small by making $\mathrm{dist}(n\Phi,\bbZ)$ sufficiently small. We then have
\begin{equation}\label{eq:L1=0}
\liminf_{n\to\infty}\Vert\Psi^{(n)}(x)\Vert_{L^1}=0
\end{equation}
On the other hand, by \eqref{eq:AlambdaCocycleDef},  \eqref{eq:models:Asharpdef}, and \eqref{eq:Psi^n} we notice that $\Psi^{(n)}$ is a trigonometric polynomial divided by  
\begin{equation}\label{eq:Qn}
\mathcal Q^{(n)}_1
= \mathcal Q^{(n)}_1(x)
:=\prod_{k=0}^{n-1}q_{11}(x+k\Phi)
=\prod_{k=0}^{n-1} (\lambda\cos(2\pi(x+k\Phi) + i\lambda')  .
\end{equation} 
By \cite[Theorem~2.3]{AviJitoMarx2017Invent} we can control this quantity with the help of Lemma~\ref{lem:le:logIntegral} with $\varepsilon = 0$. Thus we can modify \eqref{eq:L1=0} to be
\begin{equation}\label{eq:L1=0.Q}
\liminf_{n\to\infty}\left\Vert  \left(\frac{1+\lambda'}{2}\right)^{-n}\mathcal Q^{(n)}_1(x) \Psi^{(n)}(x)\right \Vert_{L^1}=0
\end{equation}

 We observe from \eqref{eq:Psi^nb} that 
\begin{equation}\label{eq:QPsi}
\mathcal Q^{(n)}_1 \Psi^{(n)}= \mathcal Q^{(n)}_1(x) \tr A^{\aubrydual n}(x)-\mathcal Q^{(n)}_1(x) (e^{2\pi i n \theta} + (-1)^n e^{-2\pi i n \theta}).
\end{equation}
It is clear that the second term on the right hand side of the above equation is a constant multiple of the trigonometric polynomial, $\mathcal{Q}_1^{(n)}(x)$, (recall that $\theta$ is currently fixed).
By \eqref{eq:AlambdaCocycleDef} and \eqref{eq:models:Asharpdef}, it is also true that $\mathcal Q^{(n)}_1 \tr A^{\aubrydual n}(x)$ is a trigonometric polynomial. We will estimate the leading coefficients of these  trigonometric polynomials, that is, the coefficients of the $e^{2\pi i nx}$ and $e^{-2\pi i nx}$ terms. One sees immediately that the leading coefficients of $\mathcal{Q}_1^{(n)}$ have absolute value $(\lambda/2)^n$.

Next, we examine the leading coefficient of $\mathcal Q^{(n)}_1(x) \tr A^{\aubrydual n}(x)$ term of \eqref{eq:QPsi}. Recall by \eqref{eq:Qn} that $\mathcal Q^{(n)}_1(x)$ is a product of $q_j^{11}$ terms, and by \eqref{eq:AAubryDualn} $A^{\aubrydual n}(x)$ is a product of $A^\aubrydual(x+j\Phi)$ terms. Thus to understand $\mathcal Q^{(n)}_1(x) \tr A^{\aubrydual n}(x)$ we multiply out $q_{j}^{11}A^\aubrydual(x+j\Phi)$. Since we are trying to express $\mathcal Q^{(n)}_1(x) \tr A^{\aubrydual n}(x)$ as a trigonometric polynomial, we are only concerned with terms that are multiples of $q^{21}_{j}$, since only these terms produce trigonometric expressions. So separating out the terms that are multiples of $q^{21}_{j}$, we have
\begin{equation}\label{eq:lambdaMatrix}
q_{j}^{11}A^\aubrydual(x+j\Phi)=
\begin{bmatrix}
2\lambda' \lambda^{-1}  &-1\\
-1& 0 
\end{bmatrix}q^{21}_{j} + C
= \underbrace{\begin{bmatrix}
\lambda'   &-\lambda/2\\
-\lambda/2 & 0 
\end{bmatrix}}_{=:\mathfrak{S}(\lambda)}2 \sin(2\pi(j\Phi+x)) + C,
\end{equation}
where $C$ is a matrix that does not depend on $x$. We want to compute the leading coefficients of  $\mathcal Q^{(n)}_1(x) \tr A^{\aubrydual n}(x)$ expressed as a trigonometric polynomial. Note that this is a product of the $n$ expressions of the form \eqref{eq:lambdaMatrix} with $j=0,\ldots, n-1$. The leading terms of the trigonometric polynomial will be multiples of $e^{2n\pi ix }$ and $e^{-2n\pi ix }$. The only way to obtain terms of the form $e^{2n\pi ix }$ and $e^{-2n\pi ix }$ from $\mathcal Q^{(n)}_1(x) \tr A^{\aubrydual n}(x)$ is by multiplying $n$ sines in \eqref{eq:lambdaMatrix}, from $j=0,1,\ldots, n-1$. Thus we can see that these coefficients of these terms are obtained from calculating $\mathfrak{S}(\lambda)^n$.
One can check that $\mathfrak{S}(\lambda)$ has eigenvalues $\mu_\pm = \mu_\pm(\lambda) = (\lambda'\pm 1)/2$ (compare \eqref{eq:sprBlambda1}). 
Diagonalizing $\mathfrak{S}$ explicitly, we have the following for $n \geq 0$:
\[ \mathfrak{S}^n = \frac{1}{2} \begin{bmatrix} (1+\lambda')\mu_+^n +(1-\lambda')\mu_-^n 
& -\lambda \mu_+^n + \lambda \mu_-^n \\
-\lambda \mu_+^n + \lambda \mu_-^n 
& (1-\lambda')\mu_+^n + (1+\lambda')\mu_-^n \end{bmatrix} \]
Thus, the leading coefficient of $\mathcal{Q}_1^{(n)}(x) \tr A^{\aubrydual n}(x)$ has absolute value $\mu_+^n + \mu_-^n$.
But then, writing $c_n$ for the leading coefficient of $\mu_+^{-n}\mathcal{Q}_1^{(n)} \tr A^{\aubrydual n}$, this all means that
\begin{equation} \label{eq:selfdualcontspec:contra}
|c_n| \geq \frac{\mu_+^n + \mu_-^n}{\mu_+^n} - \left(\frac{\lambda}{2\mu_+} \right)^n
= 1 + \left(\frac{\lambda'-1}{\lambda' + 1} \right)^n - \left(\frac{\lambda}{\lambda'+1} \right)^n
\geq 1-o(1)
\end{equation}
as $n \to \infty$. However,
\[|c_n| = \left| \int_0^1 e^{-2\pi i n x} \frac{\mathcal Q^{(n)}_1}{\left(\frac{1+\lambda'}{2}\right)^n}\Psi^{(n)}(x) \, dx \right| \leq \left\|\frac{\mathcal Q^{(n)}_1}{\left(\frac{1+\lambda'}{2}\right)^n}\Psi^{(n)}(x) \right\|_{L^1},\]
so \eqref{eq:selfdualcontspec:contra} contradicts \eqref{eq:L1=0.Q}, which concludes the proof.
\end{proof}

\subsection{The Subcritical Regime}

The duality in Theorem \ref{t:aubryViaSolutions} together with the positivity of the Lyapunov exponent of the dual model in Theorem \ref{thm:posle} allows us to exclude point spectrum in the subcritical region for \emph{all} phases $\theta$ and \emph{all} irrational fields $\Phi$ by Delyon's argument \cite{Delyon1987JPA}.
\begin{prop}\label{prop:no_ac}
	Let $\Phi$ be irrational and $\lambda_1>\lambda_2$. 
	For every $\theta\in\mathbb R$, the point spectrum of $W_{\lambda_1,\lambda_2,\Phi,\theta}$ is empty.
\end{prop}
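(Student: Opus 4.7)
The plan is to exclude $\ell^2$ eigenvectors of $W:=W_{\lambda_1,\lambda_2,\Phi,\theta}$ by dualizing them into formal solutions of the (supercritical) dual equation and contradicting the positivity of the dual Lyapunov exponent via an $L^2$ averaging identity, in the spirit of Delyon \cite{Delyon1987JPA}. Suppose for contradiction that $\psi$ is a normalized $\ell^2$ eigenvector with eigenvalue $z\in\partial\bbD$. By Theorem~\ref{t:aubryViaSolutions}, for a.e.\ $\xi\in\bbT$ the sequence $\varphi^\xi$ defined by \eqref{eq:models:dualSolutionDef} satisfies $W^\aubrydual_{\lambda_1,\lambda_2,\Phi,\xi}\varphi^\xi = z\varphi^\xi$. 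The $2\times 2$ matrix appearing in \eqref{eq:models:dualSolutionDef} is unitary, so Parseval combined with a change of variables yields
\begin{equation}\label{eq:planparseval}
\int_\bbT \|\varphi_n^\xi\|^2\,d\xi = \|\widecheck\psi\|_{L^2}^2 = 1 \quad \text{for every } n\in\bbZ.
\end{equation}

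Next, I would bring positivity of the dual Lyapunov exponent into play. By \eqref{eq:models:Asharpdef}, on $\partial\bbD$ the dual cocycle shares its Lyapunov exponent with $A_{\lambda_2,\lambda_1,z}$, and Theorem~\ref{thm:posle} applied with $\lambda_1$ and $\lambda_2$ exchanged gives $L^\aubrydual(z)\geq -\log\lambda_0 > 0$, precisely because $\lambda_1>\lambda_2$. Moreover, the Pastur-type argument of Proposition~\ref{prop:cocycle:pastur}, applied to the covariant family $\{W^\aubrydual_{\lambda_1,\lambda_2,\Phi,\xi}\}_{\xi\in\bbT}$, shows that $z$ fails to be an eigenvalue of $W^\aubrydual_{\lambda_1,\lambda_2,\Phi,\xi}$ for a.e.\ $\xi$. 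Oseledets's theorem then furnishes, for a.e.\ $\xi$, a measurable stable/unstable splitting whose two asymptotic stable directions (at $+\infty$ and at $-\infty$) cannot coincide, so any nontrivial dual solution $v$ has a nonzero unstable component in at least one time direction; summing a geometric series yields the pointwise lower bound
\begin{equation}\label{eq:plangrowth}
\sum_{|n|\leq N}\|v_n\|^2 \geq c(\xi)\,e^{L^\aubrydual(z)\,N}
\end{equation}
for all $N$ sufficiently large, with $c(\xi)>0$.

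I also need that $\varphi^\xi$ is nontrivial for a.e.\ $\xi$: the set $\{\xi\in\bbT:\widecheck\psi(n\Phi+\xi)=0 \text{ for all } n\}$ is invariant under the irrational rotation $\xi\mapsto \xi+\Phi$, hence by unique ergodicity has Lebesgue measure $0$ or $1$, and the latter is impossible since $\psi\not\equiv 0$. Setting $f_N(\xi):= e^{-L^\aubrydual(z)N/2}\sum_{|n|\leq N}\|\varphi_n^\xi\|^2$, identity \eqref{eq:planparseval} gives $\int_\bbT f_N\,d\xi = (2N+1)\,e^{-L^\aubrydual(z)N/2}\to 0$, so Fatou's lemma forces $\liminf_N f_N(\xi)=0$ for a.e.\ $\xi$; on the other hand, \eqref{eq:plangrowth} applied to $v=\varphi^\xi$ gives $f_N(\xi)\geq c(\xi)\,e^{L^\aubrydual(z)N/2}\to\infty$ for a.e.\ $\xi$, a contradiction. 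The main obstacle is the uniform Oseledets-type bound \eqref{eq:plangrowth}: one has to rule out the coincidence of the stable-at-$+\infty$ and stable-at-$-\infty$ subspaces for a.e.\ $\xi$, which is exactly the almost-sure absence of $\ell^2$ eigenvectors of $W^\aubrydual$ at the fixed value $z$, secured via the Pastur argument.
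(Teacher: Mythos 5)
Your proposal is correct and follows essentially the same route as the paper: Delyon's argument, dualizing the putative eigenfunction via Theorem~\ref{t:aubryViaSolutions}, invoking positivity of the dual Lyapunov exponent from Theorem~\ref{thm:posle}, and using the Pastur argument plus the multiplicative ergodic theorem to force exponential growth of the dual solution on a half-line for a.e.\ $\xi$. The only (cosmetic) difference is how the Parseval identity is turned into a contradiction — you apply Fatou's lemma to normalized partial sums, while the paper derives a pointwise sub-polynomial bound $|\widecheck{\phi}^s(n\Phi+\xi)| < C_\epsilon(\xi)|n|^{(1+\epsilon)/2}$ from the summability of $\sum_n |n|^{-(1+\epsilon)}$.
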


\begin{proof}
This follows from an argument of Delyon \cite{Delyon1987JPA}.
Arguing by contradiction, assume that for some $\lambda_1>\lambda_2$ the operator $W_{\lambda_1,\lambda_2,\Phi,\theta}$ has an eigenvalue $z$ with corresponding eigenvector $\psi \in \scrH_1$ normalized by $\|\psi\|=1$. 
Applying Theorem~\ref{t:aubryViaSolutions}, it follows that  $\varphi = \varphi^\xi$ given by
	\begin{equation*}
		\begin{bmatrix}\varphi_{n}^+\\\varphi_{n}^-\end{bmatrix}=e^{2\pi in\theta}\begin{bmatrix}\widecheck{\phi}^+(n\Phi+\xi)\\ \widecheck{\phi}^-(n\Phi+\xi)\end{bmatrix},
	\end{equation*}
	with $\widecheck{\phi}^\pm$ as in \eqref{eq:phidef} is a formal eigenfunction of $W_{\lambda_1,\lambda_2, \Phi, \xi}^\aubrydual$ for a.e.\ $\xi$. 
	Note that the inverse Fourier transform of $\psi$ is only defined almost everywhere on $\bbT$, so $\varphi^\xi$ itself is only defined for a.e.\ $\xi \in \bbT$.
	
Moreover, by unitarity of the inverse Fourier transform, one also has $\|\widecheck{\phi}\|_{L^2(\bbT)\otimes\bbC^2}=1$. 
Consequently, for every $\epsilon>0$ we have
	\begin{equation*}
		\int_\bbT d\xi \sum_{n\in\bbZ,s=\pm1} \left| \widecheck{\phi}^s(n\Phi+\xi) \right|^2/|n|^{1+\epsilon}=\sum_n|n|^{-(1+\epsilon)}<\infty.
	\end{equation*}
	Consequently, for a.e.\ $\xi$ and any $\epsilon>0$, there exists $C_\epsilon(\xi) > 0$ such that
	\begin{equation} \label{eq:contspec:spuriousLinBd}
	\left|\widecheck{\phi}^s(n\Phi+\xi) \right| < C_\epsilon(\xi)|n|^{(1+\epsilon)/2}.
	\end{equation}
	However, since $\lambda_1 > \lambda_2$, this contradicts positivity of the Lyapunov exponent associated with the dual model (cf.\ Theorem~\ref{thm:posle}).
This is a standard argument, but let us provide the details for the reader's convenience.
First, for any fixed $z \in \partial \bbD$, the set of $\theta \in \bbT$ for which $z$ is an eigenvalue of $W_{\lambda_1,\lambda_2, \Phi, \theta}$ is a Lebesgue null set (by Proposition~\ref{prop:cocycle:pastur}).
Consequently, by the multiplicative ergodic theorem, one must have that $\widecheck{\phi}^s(n\Phi+\xi)$ grows exponentially on at least one half-line for a positive-measure set of $\xi$, in contradiction with \eqref{eq:contspec:spuriousLinBd}.\end{proof}

%!TEX root = CFO.tex

\section{Zero-Measure Cantor Spectrum} \label{sec:zeromeas}

We now discuss the proof of zero-measure Cantor spectrum in the critical case $\lambda_1 = \lambda_2 = \lambda \in (0,1]$. 
As noted before, $\Sigma_{1,1,\Phi}$ was shown to be a zero-measure Cantor set in \cite{FOZ2017CMP}. On the other hand, it is trivial to see that $\Sigma_{0,0,\Phi} = \{\pm i\}$, so the assumption $\lambda>0$ is necessary for the Cantor spectrum result (though obviously not for the zero-measure result). In view of these comments, we focus on $0<\lambda < 1$ in the present section.

The proof of Theorem~\ref{t:maintype}.\ref{t:maintype:criticalCantor} will follow from the general theory of analytic $\SL(2,\bbR)$ cocycles as developed in \cite{AviFayKri2011GAFA, AviKri2015}.

\begin{lemma}\label{l.homotopicconstant}
For every $\lambda_1 \in (0,1]$, $\lambda_2 \in [0,1]$ and $z \in \partial \bbD$, $A_{\lambda_1,\lambda_2,z}^\realified$ is homotopic to a constant.
\end{lemma}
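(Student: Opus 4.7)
The plan is to exhibit an explicit homotopy from $A^\realified_{\lambda_1,\lambda_2,z}$ to a constant loop by continuously deforming the coupling constant $\lambda_2$ down to $0$. Specifically, consider the one-parameter family
\[
H(s,\theta) := A^\realified_{\lambda_1, s\lambda_2, z}(\theta), \qquad (s,\theta) \in [0,1] \times \bbT.
\]
The common denominator in the explicit formula for $A^\realified$ simplifies to $\lambda_1 \sqrt{1 - (s\lambda_2)^2 \sin^2(2\pi\theta)}$, which is bounded below by $\lambda_1 \sqrt{1 - \lambda_2^2} > 0$ throughout the homotopy whenever $\lambda_2 \in [0,1)$. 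Combined with the polynomial dependence of the numerator entries on $s\lambda_2$ and $\sin(2\pi\theta)$, this guarantees that $H$ is a jointly continuous $\SL(2,\bbR)$-valued map on $[0,1]\times\bbT$.

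The next step is to verify the two endpoints. At $s=1$ one recovers $H(1,\theta) = A^\realified_{\lambda_1,\lambda_2,z}(\theta)$ by construction. At $s = 0$ every $\theta$-dependent term (those carrying a factor of $\lambda_2$) vanishes identically and
\[
H(0,\theta) = \frac{1}{\lambda_1}\begin{bmatrix} \Re z & \lambda_1 \Im z - \lambda_1' \Re z \\ -\lambda_1 \Im z - \lambda_1' \Re z & \Re z \end{bmatrix}
\]
is independent of $\theta$; a brief determinant check (yielding $|z|^2 = 1$) confirms that this constant matrix lies in $\SL(2,\bbR)$. Thus $H$ furnishes the required homotopy whenever $\lambda_2 \in [0,1)$.

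The remaining boundary case $\lambda_2 = 1$ is qualitatively different, because the realified cocycle then acquires meromorphic singularities at $\theta \in \{1/4, 3/4\}$ (the two zeros of $\cos(2\pi\theta)$), and the statement of the lemma has to be interpreted accordingly. For the intended spectral application, namely zero-measure Cantor spectrum in the critical regime $0 < \lambda_1 = \lambda_2 < 1$, only the strict-inequality case is needed, so this does not present an obstacle here. The one place where a reader might have expected difficulty is in producing a $\theta$-loop in $\SL(2,\bbR)$ with nontrivial winding, but because the winding number is locally constant under continuous deformations and the deformation $s \mapsto H(s,\cdot)$ lands on an evidently constant (hence null-homotopic) loop at $s=0$, this concern is avoided.
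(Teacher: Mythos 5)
Your homotopy is correct on the range $\lambda_2\in[0,1)$, and since the paper's own proof consists of the single sentence ``This is a straightforward calculation,'' there is nothing explicit to compare against; still, it is worth noting that your route (shrinking the coin coupling via $s\mapsto A^\realified_{\lambda_1,s\lambda_2,z}$) is slightly longer than the calculation the authors most likely had in mind. Inspecting the explicit formula, $A^\realified_{\lambda_1,\lambda_2,z}(\theta)$ depends on $\theta$ only through $\sintwopi(\theta)$ (the denominator is $\lambda_1\sqrt{1-\lambda_2^2\sintwopi^2(\theta)}$), so the loop factors as $F\circ \sintwopi$ with $F:[-1,1]\to\SL(2,\bbR)$ continuous; a loop that factors through a contractible space is null-homotopic, with no need to verify joint continuity of a two-parameter family or to compute the $s=0$ endpoint. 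Both arguments are valid and have exactly the same scope. Your endpoint computation is right: the $s=0$ matrix is constant in $\theta$ with determinant $|z|^2=1$, and the uniform lower bound $\lambda_1\sqrt{1-\lambda_2^2}$ on the denominator does guarantee joint continuity, so the family stays in $\SL(2,\bbR)$ throughout.

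The caveat you raise at $\lambda_2=1$ is genuine, but it is a defect of the statement rather than of your proof: for $\lambda_2=1$ the denominator degenerates to $\lambda_1|\costwopi(\theta)|$ and $A^\realified$ blows up at $\theta=1/4,3/4$ (the numerator there is generically a nonzero rank-one matrix), so $A^\realified$ is not a continuous map $\bbT\to\SL(2,\bbR)$ and ``homotopic to a constant'' requires reinterpretation (e.g., via the regularized cocycle $B$). Neither your argument nor the factorization argument covers this endpoint, and the paper does not address it either. As you observe, this is harmless for the paper's purposes: Lemma~\ref{lem:AmonotonicInZ} and everything downstream of it assume $\lambda_2<1$, and the critical case $\lambda_1=\lambda_2=1$ of Theorem~\ref{t:maintype}.\ref{t:maintype:criticalCantor} is handled by citation to \cite{FOZ2017CMP}. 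It would be worth stating the restriction $\lambda_2<1$ explicitly rather than leaving it implicit.
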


\begin{proof}
This is a straightforward calculation.
%Define $H_t = A_{(1-t)\lambda_1+t,(1-t)\lambda_2,z}^\realified$, which is a homotopy from $A_{\lambda_1,\lambda_2,z}^\realified$ to the constant cocycle \[A_{1,0,z}^\realified(\theta) \equiv \begin{bmatrix} \Re \, z & \Im \, z \\ - \Im \, z & \Re \, z \end{bmatrix},\] as desired.
\end{proof}

In order to apply the well-developed machinery of real-analytic cocycles, it is helpful to note that the cocycle is monotonic in the argument of the spectral parameter. This is a straightforward (albeit cumbersome) calculation. For the reader's convenience, we supply the details.

\begin{lemma} \label{lem:AmonotonicInZ}
For $\lambda_1>0$ and $\lambda_2<1$, $t \mapsto A_{\lambda_1,\lambda_2,e^{it}}^\realified$ is monotonic in $t$ in the following sense: for any $\theta \in \bbT$ and any $v \in \bbR^2\setminus \{0\}$ {\rm(}any determination of{\rm)} the argument of $A_{\lambda_1,\lambda_2,e^{it}}^\realified(\theta) v$ has positive derivative with respect to $t$.
\end{lemma}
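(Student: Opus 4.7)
The plan is to invoke the standard criterion for monotonicity of an $\SL(2,\bbR)$-valued smooth curve acting on $\bbR\bbP^1$. Writing $\calA(t) := A^\realified_{\lambda_1,\lambda_2,e^{it}}(\theta)$, $B(t) := \dot\calA(t)\calA(t)^{-1}$, and $J := \begin{bmatrix} 0 & 1 \\ -1 & 0\end{bmatrix}$, I will first observe that for every $v \in \bbR^2\setminus\{0\}$ any continuous determination $\varphi(t)$ of $\arg(\calA(t) v)$ satisfies
\[ \dot\varphi(t) = \frac{w(t)^\top (JB(t))\, w(t)}{|w(t)|^2}, \qquad w(t) := \calA(t) v. \]
The constraint $\det\calA \equiv 1$ forces $\tr B = 0$, and in the $2\times 2$ setting this in turn forces $JB$ to be a real symmetric matrix. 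Hence the sign of $\dot\varphi(t)$ is independent of $v$ as soon as $JB(t)$ is shown to be definite, which is equivalent to $\det B > 0$.

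Next, set $z = e^{it}$, so that $\Re z = \cos t$ and $\Im z = \sin t$, and note that each entry of $\calA(t)$ is affine in $\cos t$ and $\sin t$ by the explicit formula for $A^\realified$. Moreover the diagonal entries of $\calA(t)$ coincide, so $\calA(t)^{-1}$ has the block structure $\begin{bmatrix}a & -b\\-d & a\end{bmatrix}$ when $\calA(t) = \begin{bmatrix}a & b\\d & a\end{bmatrix}$. Differentiating directly and exploiting this symmetry yields, after a short calculation,
\[ \det B(t) = \det\dot\calA(t) = \frac{1}{{\lambda_2'}^2 + \lambda_2^2\cos^2(2\pi\theta)}. \]
The denominator is strictly positive under the hypothesis $\lambda_2 < 1$ (since then $\lambda_2' > 0$), which establishes definiteness of $JB(t)$ for every $(\theta,t)$.

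To pin down the sign, I will then evaluate a single diagonal entry of $JB(t)$. After expansion this entry reduces to a nonzero constant multiple of
\[ 1 + \lambda_2 \sin(2\pi\theta)\bigl(\lambda_1'\cos t + \lambda_1 \sin t\bigr), \]
which is uniformly bounded away from zero with a fixed sign on the whole parameter set, because $|\lambda_1'\cos t + \lambda_1 \sin t| \leq \sqrt{{\lambda_1'}^2+\lambda_1^2} = 1$ and $|\lambda_2 \sin(2\pi\theta)| \leq \lambda_2 < 1$. Combined with the definiteness established in the previous step, this yields the asserted strict positivity of $\dot\varphi$.

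No conceptual obstacle arises; the main work is the bookkeeping needed to see the cancellation of the $\sin t\cos t$ cross-terms that produces the clean form of $\det\dot\calA$. The principal convenience is the symmetric block structure of $\calA$, which reduces the calculation to tracking three scalar quantities $a,b,d$ rather than four independent matrix entries, and which also makes the symmetry of $JB$ transparent.
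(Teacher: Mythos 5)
Your argument is correct, and it is in substance the same computation as the paper's, repackaged more efficiently. The paper parametrizes $v=v_u=[\cos u,\sin u]^\top$, expands the Wronskian-type quantity $\dot y_2y_1-\dot y_1y_2$ for the (rescaled) image vector as a quadratic form $a\cos^2u+b\sin^2u+2c\sin u\cos u$, and verifies definiteness by checking $a,b>0$ and $ab-c^2=\lambda_1^4(1-\lambda_2^2\sin^2(2\pi\theta))>0$ via the auxiliary Lemma~\ref{lem:cos^2sin^2trigbound}. Your version writes the same derivative as $w^\top JB\,w/|w|^2$ with $B=\dot{\mathcal A}\mathcal A^{-1}$ and tests definiteness of the symmetric matrix $JB$ through $\det B=\det\dot{\mathcal A}$. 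These are literally the same quadratic form up to the change of variables $v\mapsto \mathcal A v$ and a positive scalar: one checks that the paper's discriminant $ab-c^2$ equals $D^4\det\dot{\mathcal A}$ with $D^2=\lambda_1^2({\lambda_2'}^2+\lambda_2^2\cos^2(2\pi\theta))$, consistent with your formula $\det\dot{\mathcal A}=({\lambda_2'}^2+\lambda_2^2\cos^2(2\pi\theta))^{-1}$ (the unnormalized matrix of $t$-derivatives has determinant $\lambda_1^2$), and your diagonal entry of $JB$ is $-(\lambda_1/D^2)\bigl(1+\lambda_2\sin(2\pi\theta)(\lambda_1'\cos t+\lambda_1\sin t)\bigr)$, i.e.\ the paper's coefficient $a$ up to a negative constant. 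What your route buys is that $\det\dot{\mathcal A}$ is a one-line computation and the cross-term cancellation is automatic, so you avoid expanding all three coefficients and the separate trig lemma; what the paper's route buys is that it never needs $\mathcal A^{-1}$ and displays the coefficients explicitly. One small caveat: with the standard counterclockwise determination of the argument your diagonal entry comes out \emph{negative}, so the uniform sign you obtain is negative rather than positive; this is purely an orientation convention (the paper's own computation, which writes the image vector as $[y_2,y_1]^\top$ and uses $\tan(\arg)=y_2/y_1$, first component over second, carries the same ambiguity), and only the uniform definiteness matters for the application to \cite{AviKri2015}. You should simply state which orientation you use rather than assert positivity without checking the sign of the entry you compute.
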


We will need the following elementary calculation.
\begin{lemma} \label{lem:cos^2sin^2trigbound}
Suppose $a,b,c \in \bbR$. If $a, b > 0$ and $ab-c^2>0$, then 
\[a \cos^2(\vartheta)+b\sin^2(\vartheta)+2c\sin\vartheta\cos\vartheta > 0\]
for all $\vartheta \in [0,2\pi]$. 
\end{lemma}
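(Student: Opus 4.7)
The plan is to recognize the left-hand side as a quadratic form in the unit vector $(\cos\vartheta,\sin\vartheta)^\top$ and reduce the claim to positive-definiteness of a $2\times 2$ matrix. Concretely, one writes
\begin{equation*}
a\cos^2(\vartheta)+b\sin^2(\vartheta)+2c\sin\vartheta\cos\vartheta
= \begin{bmatrix}\cos\vartheta & \sin\vartheta\end{bmatrix}
\begin{bmatrix} a & c \\ c & b \end{bmatrix}
\begin{bmatrix}\cos\vartheta \\ \sin\vartheta\end{bmatrix}.
\end{equation*}
The symmetric matrix on the right has trace $a+b$ and determinant $ab-c^2$. Under the hypotheses $a,b>0$ and $ab-c^2>0$, both quantities are strictly positive, so both eigenvalues are strictly positive and the matrix is positive definite. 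Since $(\cos\vartheta,\sin\vartheta)^\top$ is a unit vector (hence nonzero) for every $\vartheta$, the quadratic form evaluates to a strictly positive number, which is the desired conclusion.

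If one prefers an entirely elementary derivation without invoking the spectral theorem, I would instead complete the square: assuming $a>0$,
\begin{equation*}
a\cos^2(\vartheta)+b\sin^2(\vartheta)+2c\sin\vartheta\cos\vartheta
= \frac{(a\cos\vartheta+c\sin\vartheta)^2}{a} + \frac{ab-c^2}{a}\sin^2(\vartheta).
\end{equation*}
Both summands are nonnegative by the hypotheses, and simultaneous vanishing would force $\sin\vartheta=0$ (from the second term since $ab-c^2>0$) and then $a\cos\vartheta=0$ (from the first), which together force $(\cos\vartheta,\sin\vartheta)=(0,0)$, a contradiction. Hence the expression is strictly positive.

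There is no real obstacle here; the only minor point to be mindful of is that one must use both strict positivity of $a$ (or symmetrically $b$) and strict positivity of $ab-c^2$ to rule out the zero case, so the argument does genuinely need the strict inequalities stated in the lemma. I would present the completing-the-square version, as it is fully self-contained and avoids even a brief appeal to the spectral theorem.
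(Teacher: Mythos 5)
Your proof is correct, and both of your variants work. The route differs from the paper's: the paper estimates the cross term by $|2c\sin\vartheta\cos\vartheta| \leq 2\sqrt{ab}\,|\sin\vartheta\cos\vartheta|$ (using $c^2 < ab$) and then recognizes the remainder as the perfect square $\left(\sqrt{a}\cos\vartheta - \sqrt{b}\sin\vartheta\right)^2 \geq 0$, obtaining strict positivity by a two-case analysis on whether $\sin\vartheta\cos\vartheta$ vanishes (the first inequality is strict in one case, the last in the other). Your completed-square identity
\[
a\cos^2(\vartheta)+b\sin^2(\vartheta)+2c\sin\vartheta\cos\vartheta
= \frac{(a\cos\vartheta+c\sin\vartheta)^2}{a} + \frac{ab-c^2}{a}\sin^2(\vartheta)
\]
is an exact decomposition rather than a chain of estimates, so strictness falls out in a single stroke: simultaneous vanishing would force $\sin\vartheta=0$ and $\cos\vartheta=0$. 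This is arguably tidier than the paper's case split, and your first framing (positive definiteness of $\begin{bmatrix} a & c \\ c & b\end{bmatrix}$ via trace and determinant) makes transparent why the hypotheses are exactly Sylvester's criterion; the paper's version has the minor aesthetic advantage of being symmetric in $a$ and $b$ and never dividing by anything. No gaps in either of your arguments.
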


\begin{proof}
First, observe that
\begin{align}\nonumber
a \cos^2(\vartheta)+b\sin^2(\vartheta)+2c\sin\vartheta\cos\vartheta 
&\geq a\cos^2(\vartheta)+b\sin^2(\vartheta)-|2c\sin\vartheta\cos\vartheta|\nonumber\\
\label{eq:0meas:triglemma1}
& \geq a\cos^2(\vartheta)+b\sin^2(\vartheta)-|2\sqrt{ab}\sin\vartheta\cos\vartheta|\\
\nonumber
&\geq |\sqrt a\cos(\vartheta))-\sqrt b\sin(\vartheta)|^2 \\
\label{eq:0meas:triglemma2}
& \geq 0.
\end{align}
Since \eqref{eq:0meas:triglemma1} is strict whenever $\cos\vartheta\sin\vartheta\neq 0$ and \eqref{eq:0meas:triglemma2} is strict whenever $\cos\vartheta\sin\vartheta = 0$, the lemma is proved.
\end{proof}

\begin{proof}[Proof of Lemma~\ref{lem:AmonotonicInZ}]
Let $\lambda_1$, $\lambda_2$, and $\theta$ be given. Of course, positive scalar multiples do not affect the argument of a vector, so we may consider
\begin{align*}
\mathfrak{N}_{\lambda_1,\lambda_2,z}^\realified(\theta)
& =\lambda_1 \sqrt{{\lambda_2'}^2 + \lambda_2^2\costwopi^2(\theta)}  A_{\lambda_1,\lambda_2,z}^\realified(\theta) \\
&= 
\begin{bmatrix} 
\Re \, z + \lambda_1'\lambda_2 \sintwopi(\theta) 
& \lambda_1 \Im \, z - \lambda_1'\Re \, z - \lambda_2 \sintwopi(\theta)  \\
 -\lambda_1 \Im \, z - \lambda_1'\Re \, z - \lambda_2 \sintwopi(\theta) 
 & \Re \, z + \lambda_1'\lambda_2 \sintwopi(\theta) 
\end{bmatrix}
\end{align*}
 Write $z = e^{it}$, consider $v_u = [\cos u,\sin u]^\top$, and define $y_1,y_2$ by
\[
\begin{bmatrix}
y_2(t) \\ y_1(t)
\end{bmatrix}
= \mathfrak{N}^\realified_{\lambda_1,\lambda_2,e^{it}}(\theta)v_u.
\]
Since $\tan(\arg(y_2,y_1))= y_2/y_1$, computing the derivative of the argument of $A^\realified_{\lambda_1,\lambda_2,z}(\theta)v_u$ with respect to $t$ (and denoting the derivative with respect to $t$ by a dot) gives
\begin{align}
\label{eq:0meas:monotonicargdot1}
\frac{d}{dt}\left( \arg \begin{bmatrix}
y_2 \\ y_1
\end{bmatrix} \right)
= \frac{1}{1+(y_2/y_1)^2} \frac{\dot{y}_2y_1-\dot{y}_1 y_2}{y_1^2}.
\end{align}
Thus, we are left with considering $\dot{y}_2y_1-\dot{y}_1 y_2$. We have
\begin{align}
\nonumber
\dot{y}_2y_1-\dot{y}_1 y_2
 & = [-\Im \, z \cos u + ( \lambda_1 \Re \, z + \lambda_1'\Im \, z)\sin u] \\
 \nonumber
 & \qquad\qquad \times [( -\lambda_1 \Im \, z - \lambda_1'\Re \, z - \lambda_2 \sintwopi(\theta) ) \cos u + ( \Re \, z + \lambda_1'\lambda_2 \sintwopi(\theta))\sin u ] \\
 \nonumber
 & \qquad\qquad - [(-\lambda_1\Re z + \lambda_1' \Im z)\cos u -\Im z \sin u] \\ 
 \nonumber
 &\qquad\qquad \times [(\Re \, z + \lambda_1'\lambda_2 \sintwopi(\theta) )\cos u + ( \lambda_1 \Im \, z - \lambda_1'\Re \, z - \lambda_2 \sintwopi(\theta))\sin u] \\ 
 \nonumber
 & = \lambda_1(1 + \lambda_2 \sintwopi(\theta)(\lambda_1' \Re z + \lambda_1 \Im z)) \cos^2u \\
 \nonumber
 & \qquad\qquad + \lambda_1(1 - \lambda_2 \sintwopi(\theta)(-\lambda_1' \Re z + \lambda_1 \Im z)) \sin^2 u \\
 \nonumber
 & \qquad\qquad  - 2\lambda_1(\lambda_1' + \lambda_2\sintwopi(\theta)\Re z) \sin u \cos u \\
 \label{eq:0meas:monotonicargdot2}
& =a \cos^2u + b\sin^2u + 2c \cos u \sin u,
\end{align}
where
\begin{align}
\label{eq:0meas:monotonica}
a & = \lambda_1(1 + \lambda_2 \sintwopi(\theta)(\lambda_1' \Re z + \lambda_1 \Im z)) \\
\label{eq:0meas:monotonicb}
b & = \lambda_1(1 - \lambda_2 \sintwopi(\theta)(-\lambda_1' \Re z + \lambda_1 \Im z)) \\
\label{eq:0meas:monotonicc}
c & = -\lambda_1(\lambda_1' + \lambda_2\sintwopi(\theta)\Re z).
\end{align}
Note that the assumptions $\lambda_1>0$ and $\lambda_2<1$ imply $a,b>0$. After some algebra, one gets
\begin{align}
\nonumber
ab-c^2 
& = \lambda_1^2(1+2\lambda_2 \sintwopi(\theta)\lambda_1'\Re\, z + \lambda_2^2\sintwopi(\theta)^2({\lambda_1'}^2[\Re \, z]^2 - \lambda_1^2 [\Im \, z]^2)) \\
\nonumber
& \qquad \qquad - \lambda_1^2(\lambda_1' + \lambda_2\sintwopi(\theta)\Re\, z)^2 \\
\nonumber
& = \lambda_1^2 \left( (1+2\lambda_2 \sintwopi(\theta)\lambda_1'\Re\, z + \lambda_2^2\sintwopi(\theta)^2([\Re \, z]^2 - \lambda_1^2)) - (\lambda_1' + \lambda_2\sintwopi(\theta)\Re\, z)^2 \right) \\
\nonumber
& = \lambda_1^2 \left(  -\lambda_1^2 \lambda_2^2\sintwopi(\theta)^2 +\lambda_1^2 \right) \\
\label{eq:0meas:monotonicdisc}
& = \lambda_1^4(1-\lambda_2^2\sintwopi(\theta)^2) > 0,\end{align}
which is strictly positive by the assumption $\lambda_2<1$. Combining \eqref{eq:0meas:monotonicargdot1}, \eqref{eq:0meas:monotonicargdot2}, \eqref{eq:0meas:monotonica}, \eqref{eq:0meas:monotonicb}, \eqref{eq:0meas:monotonicc}, and \eqref{eq:0meas:monotonicdisc} with Lemma~\ref{lem:cos^2sin^2trigbound} we get
\begin{equation}
\frac{d}{dt}\left( \arg \begin{bmatrix}
y_2 \\ y_1
\end{bmatrix} \right) > 0,
\end{equation}
which concludes the proof.
\end{proof}

Given $\lambda_1$, $\lambda_2$, and $\Phi$, let
\begin{equation}
\scrZ = \scrZ_{\lambda_1,\lambda_2,\Phi} = \set{z \in \partial \bbD: L_{\lambda_1, \lambda_2, \Phi}(z) = 0}.
\end{equation}

For $\mathcal{Y} = C^\omega,L^2$, we say that an $\SL(2,\bbR)$ cocycle $(\Phi,A)$ is $\mathcal{Y}$\emph{-reducible} to rotations if there exists $B \in \mathcal{Y}(\bbT,\SL(2,\bbR))$ such that
\[ [B(x+\Phi)]^{-1} A(x)B(x) \in \SO(2,\bbR). \]

\begin{lemma} \label{lem:selfdual:L2reducible}
For Lebesgue almost every $z \in \scrZ_{\lambda_1, \lambda_2, \Phi}$, $(\Phi,A_{\lambda_1, \lambda_2, z}^\realified)$ is $L^2$-reducible to rotations.
\end{lemma}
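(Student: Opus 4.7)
The plan is to identify the family $\{(\Phi, A^\realified_{\lambda_1,\lambda_2,z})\}_{z \in \partial \bbD}$ as a monotonic analytic one-parameter family of $\SL(2,\bbR)$ cocycles that is homotopic to a constant, and then invoke the $L^2$-reducibility theorem of Avila--Krikorian \cite{AviKri2015} (building on Avila--Fayad--Krikorian \cite{AviFayKri2011GAFA}): any such family is $L^2$-conjugate to an $\SO(2,\bbR)$-valued cocycle at Lebesgue-almost every parameter lying in the set where the Lyapunov exponent vanishes.

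The bulk of the work is simply collecting the hypotheses, all of which have essentially been assembled earlier. Each cocycle map $A^\realified_{\lambda_1,\lambda_2,z}$ is $\SL(2,\bbR)$-valued by the calculation preceding \eqref{eq:selfdual:realifiedAdef}, which relies on $A^* X(\lambda_1) A = X(\lambda_1)$ together with the skew-Hermitian form $Y(\lambda_1)^* X(\lambda_1) Y(\lambda_1)$. Analyticity in $\theta$ throughout a strip is inherited from $A_{\lambda_1,\lambda_2,z}$ once one knows that $\det A_{\lambda_1,\lambda_2,z}$ admits a holomorphic square root in that strip, which was already verified in Section~\ref{sec:cocycle} via the trivial winding of $\det A$ around the origin. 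The homotopy-to-a-constant condition is exactly Lemma~\ref{l.homotopicconstant}, and the strict monotonicity in $t = \arg z$ is Lemma~\ref{lem:AmonotonicInZ}. Since Lyapunov exponents are preserved by constant unitary conjugation and by division by a unimodular analytic scalar (note that $|\det A_{\lambda_1,\lambda_2,z}| \equiv 1$ on $\bbT$ by \eqref{eq:AlambdaDet}), one has $L(\Phi, A^\realified_{\lambda_1,\lambda_2,z}) = L_{\lambda_1,\lambda_2,\Phi}(z)$, so $\scrZ_{\lambda_1,\lambda_2,\Phi}$ is precisely the zero set of the Lyapunov exponent for the realified family.

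The main obstacle I anticipate is that the framework of \cite{AviFayKri2011GAFA, AviKri2015} is classically formulated for Schr\"odinger or Jacobi cocycles parametrized by a real energy, whereas here the parameter circulates around $\partial \bbD$. This should be handled by a routine local reparametrization: on any proper arc of $\partial \bbD$ one may use $t$ as a real parameter, and since the $L^2$-reducibility criterion depends only on monotonicity, analyticity, and the homotopy class of the cocycle (through Kotani-theoretic arguments that are local in the spectral parameter), there is no genuine obstruction to transferring the conclusion to the circle. A secondary bookkeeping point is to track the square-root normalization and the conjugation by $Y(\lambda_1)$ when passing between $A^\realified$ and $A$, but this is straightforward because both operations are analytic and bounded on $\bbT$, so $L^2$-reducibility is preserved.
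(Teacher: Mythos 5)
Your proposal is correct and follows essentially the same route as the paper: the paper's proof is a one-line citation of Lemma~\ref{lem:AmonotonicInZ} together with \cite[Theorem 1.7]{AviKri2015}, exactly the monotonicity-plus-Avila--Krikorian argument you describe. Your additional verification of the hypotheses (unimodularity, analyticity via the holomorphic square root, homotopy to a constant, and the equality of Lyapunov exponents) is just an expanded account of what the paper leaves implicit.
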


\begin{proof}
This follows from Lemma \ref{lem:AmonotonicInZ} and \cite[Theorem 1.7]{AviKri2015}.
\end{proof}

\begin{lemma} \label{lem:selfdual:Comegareducible}
For Lebesgue almost every $z \in \scrZ_{\lambda_1, \lambda_2, \Phi}$, $(\Phi,A_{\lambda_1, \lambda_2, z}^\realified)$ is $C^\omega$-reducible to rotations.
\end{lemma}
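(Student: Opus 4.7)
The plan is to combine Lemma~\ref{lem:selfdual:L2reducible} with a Diophantine property of the fibered rotation number of $(\Phi,A_{\lambda_1,\lambda_2,z}^\realified)$, and then apply the standard $L^2$-to-$C^\omega$ upgrade of Avila--Fayad--Krikorian. This schematic is well-established in the analytic $\SL(2,\bbR)$-cocycle literature; the content here is to verify that its hypotheses hold in the present setting.

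First, for $z\in\partial\bbD$ I would define the fibered rotation number $\rho(z):=\rho(\Phi,A_{\lambda_1,\lambda_2,z}^\realified)\in\bbR/\bbZ$, which is unambiguously defined modulo $\bbZ$ since $A_{\lambda_1,\lambda_2,z}^\realified$ is homotopic to a constant by Lemma~\ref{l.homotopicconstant}. Writing $z=e^{it}$, Lemma~\ref{lem:AmonotonicInZ} implies that $t\mapsto\rho(e^{it})$ is nondecreasing. Any maximal interval on which $\rho$ is locally constant can be identified with an open spectral gap of the associated operator family via Johnson-type gap labeling for CMV-type cocycles (cf.\ \cite[Section~10.11]{Simon2005OPUC2}); on such a gap the cocycle is uniformly hyperbolic, so $L_{\lambda_1,\lambda_2,\Phi}(z)>0$ and hence such intervals are disjoint from $\scrZ$. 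Consequently the pushforward of Lebesgue measure on $\scrZ$ under $\rho$ is absolutely continuous with respect to Lebesgue measure on $\bbR/\bbZ$, and the set of $z\in\scrZ$ for which $\rho(z)$ is Diophantine with respect to $\Phi$ is Lebesgue-conull in $\scrZ$.

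Next, I would apply the upgrade result of \cite{AviFayKri2011GAFA} (see also \cite[Section~1]{AviKri2015}): any analytic $\SL(2,\bbR)$ cocycle over an irrational rotation by $\Phi$ that is $L^2$-conjugate to a rotation by an angle Diophantine with respect to $\Phi$ is in fact $C^\omega$-conjugate to that rotation. Combining this with Lemma~\ref{lem:selfdual:L2reducible} yields $C^\omega$-reducibility at every $z$ in a Lebesgue-conull subset of $\scrZ$, namely those $z$ at which both Lemma~\ref{lem:selfdual:L2reducible} applies and $\rho(z)$ is Diophantine with respect to $\Phi$.

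The main obstacle will be justifying the absence of plateaus of $\rho$ inside $\scrZ$, since Lemma~\ref{lem:AmonotonicInZ} supplies only nondecrease rather than strict monotonicity. The argument rests on the identification of intervals of constancy of $\rho$ with open spectral gaps (where the cocycle is uniformly hyperbolic); for the realified cocycle $A^\realified$ this identification is transported from the original cocycle $A$ through the unitary conjugation by $Y(\lambda_1)$ recorded in \eqref{eq:selfdual:realifiedAdef}, which preserves both the spectrum and the rotation number modulo $\bbZ$.
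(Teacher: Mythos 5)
Your overall route --- pass to the fibered rotation number, show that almost every $z \in \scrZ$ has rotation number Diophantine with respect to $\Phi$, then invoke the $L^2$-to-$C^\omega$ upgrade of Avila--Fayad--Krikorian --- is exactly the mechanism behind the paper's proof, which is a one-line appeal to \cite[Lemma~1.4]{AviFayKri2011GAFA} and the discussion in \cite{AviJitoMarx2017Invent}. Your use of Lemmas~\ref{l.homotopicconstant}, \ref{lem:AmonotonicInZ}, and \ref{lem:selfdual:L2reducible} is appropriate, and the identification of plateaus of $\rho$ with uniformly hyperbolic gaps is fine.

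The gap is in the measure-theoretic step. From the absence of plateaus you obtain only that $t \mapsto \rho(e^{it})$ is \emph{strictly} increasing on $\scrZ$, and you then assert that ``consequently'' the pushforward of Lebesgue measure on $\scrZ$ under $\rho$ is absolutely continuous. That implication is false: a continuous, strictly increasing function can carry a set of positive Lebesgue measure onto a Lebesgue-null set. (Let $c$ be the Cantor function and $f(x)=(x+c(x))/2$; then $f$ is a strictly increasing homeomorphism of $[0,1]$ sending the Cantor set to a set of measure $1/2$, so $f^{-1}$ sends a set of measure $1/2$ onto the null Cantor set.) Hence strict monotonicity alone does not exclude that $\rho$ maps a positive-measure subset of $\scrZ$ into the null set of non-Diophantine angles. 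What is actually needed --- and what \cite[Lemma~1.4]{AviFayKri2011GAFA} provides --- is a quantitative transversality estimate at the $L^2$-reducible parameters themselves: if $(\Phi, A_{\lambda_1,\lambda_2,z_0}^\realified)$ is $L^2$-conjugated to rotations by some $B \in L^2$, then integrating the pointwise monotonicity of Lemma~\ref{lem:AmonotonicInZ} against the conjugacy yields a reverse-Lipschitz bound of the form $|\rho(e^{it})-\rho(e^{it_0})| \geq c(z_0)\,|t-t_0|$ for $t$ near $t_0$, with $c(z_0)$ controlled by $\|B\|_{L^2}^{-2}$. This lower bound, available at every point supplied by Lemma~\ref{lem:selfdual:L2reducible}, is what forbids compression of positive measure into a null set; it is the ingredient your argument is missing, and without it the ``consequently'' does not stand.
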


\begin{proof}
This follows from Lemma~\ref{lem:selfdual:L2reducible}, real-analyticity of $A_{\lambda_1,\lambda_2,z}^\realified$, and Avila--Fayad--Krikorian \cite{AviFayKri2011GAFA}; see \cite[Lemma~1.4]{AviFayKri2011GAFA} and its proof.
See also the discussion on \cite[Page~324]{AviJitoMarx2017Invent}. More precisely, \cite{AviFayKri2011GAFA} yields the desired reducibility for a.e.\ \emph{rotation number}. To pass from a.e.\ rotation number to a.e.\ $z \in \scrZ_{\lambda_1, \lambda_2, \Phi}$, one uses the relationship between the rotation number for the Szeg\H{o} cocycle and the density of states for the CMV matrix \cite{GeronimoJohnson1996JDE}, Kotani theory for CMV matrices \cite[Section~10.11]{Simon2005OPUC2}, and the Kotani formula for the derivative of the CMV density of states on the vanishing set of the Lyapunov exponent \cite{FilOng2018PAMS}.

Let us supply some additional details for the reader's convenience. For $\theta \in \bbT$, let $\eta_\theta$ denote the spectral measure  given by
\begin{equation}
\int f \, d\eta_\theta = \langle \delta_0^+, f(W_{\lambda_1,\lambda_2,\Phi,\theta}) \delta_0^+ \rangle, \quad f \in C(\partial \bbD),
\end{equation}
and recall that the density of states measure $\nu = \nu_{\lambda_1, \lambda_2, \Phi}$ associated with the family $\{W_{\lambda_1, \lambda_2, \Phi,\theta}\}_{\theta \in \bbT}$ is given by the average of the spectral measures:
\begin{equation} \label{eq:DOSDEF}
\int f \, d\nu
= \int_\bbT \int f \, d\eta_\theta \, d\theta
= \int_\bbT \langle \delta_0^+, f(W_{\lambda_1,\lambda_2,\Phi,\theta}) \delta_0^+ \rangle \,  d\theta.
\end{equation}
Keeping in mind the equivalence between $W$ and an extended CMV matrix as in Corollary~\ref{coro:UAMOtoCMV}, the result of \cite{FilOng2018PAMS} together with Kotani theory for ergodic CMV matrices \cite[Section~10.11]{Simon2005OPUC2} implies
\begin{equation} \label{eq:zeromeas:dnodphi}
\frac{d\nu}{d\phi}(e^{i\phi})>0
\end{equation}
for Lebesgue a.e.\ $e^{i\phi} \in \scrZ_{\lambda_1, \lambda_2, \Phi}$. Let $\rho$ denote the rotation number of the Szeg\H{o} cocycle. By \cite{GeronimoJohnson1996JDE}, $d\nu/d\phi = 0 \iff d\rho/d\phi =0$, so \eqref{eq:zeromeas:dnodphi} holds for a.e.\ $e^{i\phi} \in \scrZ_{\lambda_1, \lambda_2, \Phi}$ with $\nu$ replaced by $\rho$. This suffices to conclude the argument.
\end{proof}

\begin{lemma} \label{lem:selfdual:subcritical}
For Lebesgue almost every $z \in \scrZ_{\lambda_1, \lambda_2, \Phi}$, the cocycle $(\Phi,A^\realified_{\lambda_1, \lambda_2, z})$ is subcritical.
\end{lemma}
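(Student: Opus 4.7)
The plan is to combine the $C^\omega$-reducibility of $(\Phi, A^\realified_{\lambda_1,\lambda_2,z})$ from Lemma~\ref{lem:selfdual:Comegareducible} with the explicit form of $\det A_{\lambda_1,\lambda_2,z}$ in \eqref{eq:AlambdaDet}. The argument will proceed in two steps: first, derive subcriticality of the realified cocycle $(\Phi, A^\realified_z)$ from analytic reducibility to a constant rotation; second, transfer this subcriticality to $(\Phi, A_z)$ via the defining relation \eqref{eq:selfdual:realifiedAdef} and the vanishing of a certain determinant integral on the strip $|\Im\theta|<\varepsilon_0$.

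For the first step, Lemma~\ref{lem:selfdual:Comegareducible} yields, for a.e.\ $z \in \scrZ$, an analytic $B \in C^\omega(\bbT,\SL(2,\bbR))$ extending holomorphically to a strip $\{|\Im\theta|<\delta\}$ together with a constant rotation $R\in\SO(2,\bbR)$ satisfying $B(x+\Phi)^{-1} A^\realified_z(x) B(x) = R$. Continuing this identity analytically and iterating, one obtains
\[ (A^\realified_z)^{n,\Phi}(x+i\varepsilon) = B(x+n\Phi+i\varepsilon)\, R^n \, B(x+i\varepsilon)^{-1}, \qquad |\varepsilon|<\delta. \]
Since $\|R^n\|=1$ and $\|B^{\pm 1}\|$ is uniformly bounded on any compact substrip, one concludes $L(\Phi,A^\realified_z,\varepsilon)\leq 0$ on $\{|\varepsilon|<\delta\}$. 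Combined with $L\geq 0$ for $\SL(2,\bbC)$ cocycles (noting $\det A^\realified_z\equiv 1$), this yields $L(\Phi,A^\realified_z,\varepsilon)\equiv 0$ on the strip, i.e., $(\Phi, A^\realified_z)$ is subcritical.

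For the second step, $A^\realified_z = Y(\lambda_1)^* (A_z/\sqrt{\det A_z}) Y(\lambda_1)$ with $Y(\lambda_1)\in\SU(2)$ a constant matrix, so constant unitary conjugation preserves the Lyapunov exponent and
\[ L(\Phi, A_z,\varepsilon) = L(\Phi, A^\realified_z,\varepsilon) + \tfrac{1}{2}\int_\bbT \log|\det A_z(\theta+i\varepsilon)|\,d\theta. \]
Using \eqref{eq:AlambdaDet}, the integral decomposes as
\[ \int_\bbT \log|\lambda_2\costwopi(\theta+i\varepsilon) + i\lambda_2'|\,d\theta - \int_\bbT \log|\lambda_2\costwopi(\theta+i\varepsilon) - i\lambda_2'|\,d\theta. \]
Taking complex conjugates inside the absolute value shows that the first integral equals $\int_\bbT \log|\lambda_2\costwopi(\theta-i\varepsilon) - i\lambda_2'|\,d\theta$, which by Lemma~\ref{lem:le:logIntegral} (applied with $t=\lambda_2$ at $-\varepsilon$) coincides with the second whenever $|\varepsilon|\leq\varepsilon_0$, both equaling $\log\frac{1+\lambda_2'}{2}$. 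Hence $\int_\bbT\log|\det A_z(\theta+i\varepsilon)|\,d\theta \equiv 0$ on $\{|\varepsilon|<\varepsilon_0\}$, and therefore $L(\Phi,A_z,\varepsilon)\equiv 0$ on $\{|\varepsilon|<\min(\delta,\varepsilon_0)\}$, which is the subcriticality condition for $(\Phi,A_z)$.

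I do not anticipate a substantial obstacle. Step one is the standard ``analytic reducibility implies subcriticality on the strip of conjugacy'' argument, and step two reduces entirely to the explicit integral identity just described. The only real care required is in the symmetry argument $\theta\mapsto\theta\pm i\varepsilon$ to match the two halves of the decomposed determinant integral with the form given in Lemma~\ref{lem:le:logIntegral}, and in ensuring that the neighborhood of $\varepsilon=0$ on which we obtain vanishing of $L(\Phi,A_z,\cdot)$ lies within both the strip of reducibility and the strip $\{|\varepsilon|<\varepsilon_0\}$ of analyticity of $A_z$.
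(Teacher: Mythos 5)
Your overall route is the same as the paper's (the paper compresses this lemma to a one-line citation of Lemma~\ref{lem:selfdual:Comegareducible} and \eqref{eq:selfdual:realifiedAdef}), and your second step is correct: constant unitary conjugation by $Y(\lambda_1)$ preserves the Lyapunov exponent, and the determinant integral $\int_\bbT \log|\det A_z(\theta+i\varepsilon)|\,d\theta$ indeed vanishes for $|\varepsilon|<\varepsilon_0$ by the conjugation symmetry $\overline{\costwopi(\theta+i\varepsilon)}=\costwopi(\theta-i\varepsilon)$ together with Lemma~\ref{lem:le:logIntegral}, so subcriticality of $(\Phi,A^\realified_z)$ transfers to $(\Phi,A_z)$ exactly as you say.

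There is, however, a genuine gap in your first step. The paper's notion of $C^\omega$-reducibility to rotations (and the output of \cite[Theorem~1.7]{AviKri2015} and \cite[Lemma~1.4]{AviFayKri2011GAFA}) produces $B$ with $B(x+\Phi)^{-1}A^\realified_z(x)B(x)=D(x)\in\SO(2,\bbR)$, where $D$ is in general a \emph{non-constant} analytic family of rotations, not a single constant $R$. Your estimate hinges on $\|R^n\|=1$ after complexification, and this fails for non-constant $D$: writing $D(x)$ as rotation by angle $2\pi\varphi(x)$, the complexified matrix $D(x+i\varepsilon)$ lies in $\SO(2,\bbC)$ and has norm comparable to $e^{2\pi|\Im\varphi(x+i\varepsilon)|}$, which is not $1$. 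The iterates are rotations by $2\pi\sum_j\varphi(x+j\Phi+i\varepsilon)$, and if $\varphi$ has nonzero winding number $k$ (i.e., $\varphi(x)=kx+\psi(x)$ with $\psi$ periodic), then $L(\Phi,D,\varepsilon)=2\pi|k\varepsilon|$ and the conjugated cocycle is \emph{critical}, not subcritical. So your argument, as written, would ``prove'' subcriticality of any rotations-reducible cocycle, which is false. The missing ingredient is that $k=\deg D=\deg A^\realified_z=0$, which is exactly what Lemma~\ref{l.homotopicconstant} supplies (the conjugacy $B$ is defined on $\bbT$, so its degree contributions cancel). With $k=0$, either note that $\int_\bbT\Im\varphi(\theta+i\varepsilon)\,d\theta$ is independent of $\varepsilon$ by Cauchy's theorem and hence zero, giving $L(\Phi,D,\varepsilon)\equiv0$ on the strip, or equivalently invoke quantization of acceleration: $L(\Phi,A^\realified_z)=0$ and $\omega(A^\realified_z)=\deg D=0$ force $L(\Phi,A^\realified_z,\varepsilon)\equiv0$ near $\varepsilon=0$. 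With that repair the rest of your proof goes through.
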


\begin{proof}
This follows from Lemma~\ref{lem:selfdual:Comegareducible} and \eqref{eq:selfdual:realifiedAdef}.
\end{proof}

Of course, we have already shown that subcriticality is \emph{absent} for $\lambda_1 = \lambda_2 \in (0,1)$, so the crucial consequence of Lemma~\ref{lem:selfdual:subcritical} is that $\scrZ_{\lambda,\lambda,\Phi}$ (and hence $\Sigma_{\lambda,\lambda,\Phi})$ has zero Lebesgue measure.

Putting all of this together, we may prove the desired zero-measure Cantor result in the critical regime.

\begin{proof}[Proof of Theorem~\ref{t:maintype}.\ref{t:maintype:criticalCantor}]
The case $\lambda = 1$ was proved in \cite{FOZ2017CMP}, so let us consider $0 < \lambda < 1$. By Proposition~\ref{prop:cocycle:pastur}, the spectrum lacks isolated points. By Theorem~\ref{t:cocycleREG}, we have $\scrZ_{\lambda,\lambda,\Phi} =\Sigma_{\lambda,\lambda,\Phi}$. If the measure of the spectrum is zero, it follows that the spectrum has empty interior and cannot support any absolutely continuous measures. Thus, it suffices to show that the measure of $\Sigma_{\lambda,\lambda,\Phi}$ is zero.

However, this is immediate from Lemma~\ref{lem:selfdual:subcritical} and Theorem~\ref{t:cocycleREG}.\ref{t:cocycleCritical}.
Namely, if the measure of $\Sigma_{\lambda,\lambda,\Phi}$ were positive, Lemma~\ref{lem:selfdual:subcritical} would imply subcriticality of $(\Phi,A_{\lambda,\lambda,z})$ for a positive-measure set of $z \in \partial \bbD$, which contradicts Theorem~\ref{t:cocycleREG}.\ref{t:cocycleCritical}.
\end{proof}

\begin{proof}[Proof of Theorem~\ref{t:maintype}.\ref{t:maintype:critical}]
Since a zero-measure set cannot support absolutely continuous measures, this is an immediate consequence of Theorems~\ref{t:maintype}.\ref{t:maintype:criticalCantor} and \ref{t:contspec:selfdual}.
\end{proof}

%!TEX root = CFO.tex

\section{Localization in the Supercritical Region and Consequences} \label{sec:localization}

At last, we conclude by discussing localization. 
Beginning from positivity of the Lyapunov exponent, which follows from the bound
\[ L(z) \geq \log\left[ \frac{\lambda_2(1+\lambda_1')}{\lambda_1(1+\lambda_2')} \right] \]
when $\lambda_2>\lambda_1$, there is a cornucopia of techniques that one could apply in order to prove Anderson localization for $W_{\lambda_1, \lambda_2, \Phi, \theta}$ (for a.e.\ $\theta$).

We note that the non-perturbative localization proof of Bourgain--Goldstein \cite{BourgainGoldstein2000Annals}, generalized to CMV matrices by Wang--Damanik \cite{WangDamanik2019JFA}, suffices for our purposes.
We expect that most of the other localization techniques that have been employed in the study of self-adjoint quasiperiodic operators will find fruitful application here.
For instance, we expect Jitomirskaya's proof of localization for the self-adjoint supercritical almost Mathieu operator with Diohphantine frequency and nonresonant phase~\cite{Jitomirskaya1999Annals} and Avila--Jitomirskaya's proof of almost-localization~\cite{AviJito2010JEMS} can be generalized to the present setting. 
We plan to address this and other finer localization statements in forthcoming work.\footnote{Note added in revision: The first goal was accomplished in \cite{YangFPreprint}.}

\begin{proof}
[Proof of Theorem~\ref{t:maintype}.\ref{t:maintype:super}]
Consider $0<\lambda_1 < \lambda_2 \leq 1$,
and recall from Corollaries~\ref{coro:UAMOtoCMV} and \ref{coro:UAMOtoSzego} that $W_{\lambda_1,\lambda_2,\Phi, \theta}$ is unitarily equivalent to the CMV matrix $\calE^\realified_{\lambda_1,\lambda_2,\Phi, \theta}$ and that the cocycle $A_{\lambda_1,\lambda_2,z}$ is conjugate to the (two-step) Szeg\H{o} cocycle $S_{\lambda_1,\lambda_2,z}$. In particular, Theorem~\ref{thm:posle} gives 
\[L(S_{\lambda_1,\lambda_2,z}) = L(A_{\lambda_1,\lambda_2,z}) \geq \log\left[ \frac{\lambda_2(1+\lambda_1')}{\lambda_1(1+\lambda_2')} \right] >0
\] for all $z \in \partial \bbD$, so the assumptions of \cite[Theorem~1.1]{WangDamanik2019JFA} are met with $\mathcal{I}$ the set of all irrationals and $\mathcal{K} = \partial \bbD$.\footnote{The reader will notice that our Verblunsky coefficients have an alternating quasi-periodic structure, so formally, \cite{WangDamanik2019JFA} does not directly apply. However, passing to blocks of length two, one can re-run the arguments of \cite{WangDamanik2019JFA} with cosmetic changes to deduce the desired localization statement.}
As such, we get for each $\theta$ a full-measure set of $\Phi$ for which $W_{\lambda_1, \lambda_2, \Phi, \theta}$ enjoys Anderson localization.
The conclusion then follows by Fubini's theorem.
\end{proof}

We may thus conclude the proofs of Theorem~\ref{t:maintype}.\ref{t:maintype:sub} by leveraging Theorem~\ref{t:maintype}.\ref{t:maintype:super} and duality.

\begin{proof}[Proof of Theorem~\ref{t:maintype}.\ref{t:maintype:sub}]
This follows from Theorem~\ref{t:maintype}.\ref{t:maintype:super} and Aubry duality via standard arguments, which we describe for the reader's convenience.
Let $\lambda_1>\lambda_2$ be given, and let $\Phi$ be taken from the full-measure set of Diophantine irrationals for which $W_{\lambda_1, \lambda_2, \Phi, \theta}^\aubrydual$ is almost-surely localized.
Define 
\begin{equation} \label{frakEDEF}
\mathfrak{E} = \mathfrak{E}_{\lambda_1,\lambda_2,\Phi}
 = \set{z \in \partial \bbD : \substack{z \text{ is an eigenvalue of } W_{\lambda_1, \lambda_2,\Phi,\theta}^\aubrydual \text{ for some } \theta \\ \text{ whose eigenfunction is exponentially localized} }}.
\end{equation}
By assumption $\mathfrak{E}$ supports the spectral measures of $W_{\lambda_1,\lambda_2, \Phi, \theta}^\aubrydual$ for a.e.\ $\theta$, and hence also supports a.e.\ spectral measure of $W_{\lambda_1,\lambda_2,\Phi,\theta}$ by duality (cf.\ Theorem~\ref{t:aubryOperator}) and general facts about direct integrals which may be found in most texts on operator theory; see, e.g., \cite[Section~XIII.16]{ReedSimon4}. Let us describe this in more detail. Denoting $T = \partial \bbD \setminus \mathfrak{E}$, one has 
$$\chi_T(W_{\lambda_1,\lambda_2, \Phi, \theta}^\aubrydual)=0$$ 
for a.e.\ $\theta$ since $\mathfrak{E}$ supports the spectral measures of $W_{\lambda_1,\lambda_2, \Phi, \theta}^\aubrydual$ for a.e.\ $\theta$. By Theorem~\ref{t:aubryOperator} and the functional calculus for direct integrals, this in turn implies 
$$\chi_T(W_{\lambda_1,\lambda_2, \Phi, \theta})=0$$ for a.e.\ $\theta$ and hence that $\mathfrak{E}$ supports the spectral measures of $W_{\lambda_1,\lambda_2, \Phi, \theta}$ for a.e.\ $\theta$.
On the other hand, for each $z \in \mathfrak{E}$, duality at the level of solutions (i.e., Theorem~\ref{t:aubryViaSolutions}) shows that all solutions to $W_{\lambda_1, \lambda_2, \Phi, \theta}\psi = z\psi$ are bounded, and hence (for a.e.\ phase) the dual model has no subordinate solutions on $\mathfrak{E}$. 
Recalling the gauge equivalence of $W_{\lambda_1,\lambda_2,\Phi, \theta}$ and $\calE^\realified_{\lambda_1,\lambda_2,\Phi, \theta}$ as in Corollary~\ref{coro:UAMOtoCMV}, we note that $W_{\lambda_1,\lambda_2,\Phi, \theta}$ has a subordinate solution at spectral parameter $z \in \partial \bbD$ if and only if $\calE^\realified_{\lambda_1,\lambda_2,\Phi, \theta}$ has a subordinate solution at spectral parameter $z$. Consequently, we may apply subordinacy theory for standard CMV matrices as in \cite{DamGuoOng}. We see that $W_{\lambda_1,\lambda_2,\Phi, \theta}$ has purely absolutely continuous spectrum on $\mathfrak{E}$, hence purely absolutely continuous spectrum, concluding the argument.
\end{proof}

There is another proof of  Theorem~\ref{t:maintype}.\ref{t:maintype:sub} which we now sketch for the interested reader:
\begin{proof}[Sketch of alternate proof of Theorem~\ref{t:maintype}.\ref{t:maintype:sub}]
Let $\nu$ denote the density of states of $\{W_{\lambda_1,\lambda_2,\Phi,\theta}\}$ as in \eqref{eq:DOSDEF}, and let $\nu^\sharp$ denote the density of states associated to the dual model. Since $\mathfrak{E}$ (defined in \eqref{frakEDEF}) supports the spectral measure of the dual model for a.e.\ phase, it supports $\nu^\sharp$. By Aubry duality, $\nu^\sharp=\nu$, so $\mathfrak{E}$ supports $\nu$ and hence the spectral measure of $W_{\lambda_1,\lambda_2,\Phi,\theta}$ for a.e.\ $\theta$. One concludes using subordinacy theory as above.
\end{proof}

\bibliographystyle{abbrvArXiv}

\bibliography{non-iso-bib}

\end{document}